\documentclass[reqno,12pt]{amsart} %amsproc/amsbook
%	 Options: leqno/reqno = left/right equation number

%    If you need symbols beyond the basic set, uncomment this command.
\usepackage{amssymb,amsmath}

%    If your article includes graphics, uncomment this command.
%\usepackage{graphicx}

%    If the article includes commutative diagrams, ...
%\usepackage[cmtip,all]{xy}

%	 Include tikz picture
%\usepackage{tikz}
%\usetikzlibrary{matrix}
%\usepackage{float}
%\usepackage[font=footnotesize]{caption}

%    Include other referenced packages here.
\usepackage{esint}
\usepackage{bookmark}
%\usepackage{microtype} %for better looking
%\usepackage[colorlinks=false, pdfborder={0 0 0}]{hyperref}

%    Update the information and uncomment if AMS is not the copyright
%    holder.
%\copyrightinfo{2009}{American Mathematical Society}

\usepackage[left=1.2in, right=1.2in, bottom=1.5in]{geometry} %top=1.5in, 
\setlength{\parskip}{.5em}

\newtheorem{theorem}{Theorem}[section]
\newtheorem{lemma}[theorem]{Lemma}

\newtheorem{corollary}[theorem]{Corollary}
\theoremstyle{definition}
\newtheorem{definition}[theorem]{Definition}

\theoremstyle{remark}
\newtheorem{remark}[theorem]{Remark}

\numberwithin{equation}{section}

\newcommand{\norm}[1]{\lVert#1\rVert}

\newcommand{\R}{\mathbb{R}}
\newcommand{\N}{\mathbb{N}}

\newcommand{\e}{\varepsilon}
\newcommand{\txt}[1]{\text{#1}}
\newcommand{\osc}{\text{osc}}
\newcommand{\WA}{W\!\!A}
\newcommand{\CA}{C\!A}

\begin{document}

\title[Maximal principle]{Weak maximum principle for biharmonic equations in Quasiconvex Lipschitz domains}

%    Only \author and \address are required; other information is
%    optional.  Remove any unused author tags.

%    author one information
\author{Jinping Zhuge}
\address{Department of Math, University of Kentucky, Lexington, KY, 40506, USA.}
\curraddr{}
\email{jinping.zhuge@uky.edu}
%\thanks{The author is supported in part by National Science Foundation grant DMS-1161154.}
%\date{\today}

\subjclass[2010]{35B50, 35B65, 35G15}
%    The 2010 edition of the Mathematics Subject Classification is
%    now available.  If you are citing a classification from the
%    new scheme, use the following input coding instead.

\begin{abstract}
	In dimension two or three, the weak maximum principal for biharmonic equation is valid in any bounded Lipschitz domains. In higher dimensions (greater than three), it was only known that the weak maximum principle holds in convex domains or $C^1$ domains, and may fail in general Lipschitz domains. In this paper, we prove the weak maximum principle in higher dimensions in quasiconvex Lipschitz domains, which is a sharp condition in some sense and recovers both convex and $C^1$ domains.
\end{abstract}
\keywords{Weak maximum principle, Quasiconvex domains, Biharmonic equations}

\maketitle
%\tableofcontents
%    Text of article.
%    Input .tex files

\section{Introduction}

\subsection{Background}
In this paper, we are interested in the weak maximum principle (also known as Agmon-Miranda maximum principle) for biharmonic equations, i.e., if $\Delta^2 u = 0$ in a bounded Lipschitz domain $\Omega \subset \R^d$,
\begin{equation}\label{est.MP}
\norm{\nabla u}_{L^\infty(\Omega)} \le C\norm{\nabla u}_{L^\infty(\partial \Omega)}.
\end{equation}
We first briefly recall the history of this problem. The classical maximum principle (for second order elliptic equations) was first extended by Miranda and Agmon \cite{ADN59,A60} to biharmonic equations (or general higher order elliptic equations) in smooth domains (e.g., class of $C^4$) in any dimensions. Particularly, a related maximum principle was proved in \cite{M48} for very general domains in $\R^2$, including Lipschitz domains. In \cite{PV93}, by using the regularity theory in Lipschitz domains \cite{V90,PV92}, Pipher and Verchota established the weak maximum principle in Lipschitz domains in $\R^3$ or in $C^1$ domains for any dimensions. They also gave a counterexample that shows the maximum principle may fail for $d\ge 4$ in some Lipschitz domains containing the exterior of a cone with small aperture (the Lipschitz constant is large). The result then was extended in \cite{PV95} to polybiharmonic equations in Lipschitz domains in $\R^3$. On the other hand, the weak maximum principle for biharmonic equation in arbitrary convex domains in any dimensions was established by Shen in \cite{KS11}. Note that a convex domain must be Lipschitz but may not be of $C^1$, such as convex polyhedrons. We summarize the above known results for biharmonic equations as follows:
\begin{itemize}
	\item[(i)] For $d = 2$ or $3$, the weak maximum principle holds in arbitrary bounded Lipschitz domains;
	
	\item[(ii)] For $d \ge 4$, the weak maximum principle holds in bounded $C^1$ domains or convex domains;
	
	\item[(iii)] For $d \ge 4$, the weak maximum principle fails in some Lipschitz domain containing the exterior of a cone with small aperture.
\end{itemize}

Note that $C^1$ (smoothness) and convexity seem to be completely different geometric properties. Then, a remaining natural question arises: for $d\ge 4$, can we extend the weak maximum principle to a unified class of Lipschitz domains, covering both $C^1$ and convex domains? The purpose of this paper to give a positive answer to this question.

\subsection{Statement of main results}
In this paper, we will prove the weak maximum principle for biharmonic equations in the so-called quasiconvex Lipschitz domains for $d\ge 4$, which covers both cases in (ii). Moreover, the quaiconvexity condition is sharp in the sense that it exactly rules out the counterexamples in (iii); or in other words, the exterior of a cone with sufficiently large aperture is allowed in quasiconvex domains.

The notion of quasiconvex domains was introduced in \cite{JLW10} by Jia, Li and Wang to study the boundary regularity of the elliptic equations. Roughly speaking, a quasiconvex domain is a domain whose local boundary is close to be convex at small scales. We give an equivalent definition below which seems more natural and convenient for our application.

Let $A,B\subset \R^d$ be two non-empty sets. The Hausdorff distance between $A$ and $B$ is given by 
\begin{equation*}
d_H(A,B) = \max \{ \sup_{x\in A} \inf_{y\in B} |x-y|, \sup_{y\in B} \inf_{x\in A} |x-y| \}.
\end{equation*}
\begin{definition}[Quasiconvex domains]\label{def.quasiconvex}
	A bounded domain is said to be $(\delta,\sigma,R)$-quasiconvex if for any $0<r<R$ and $Q \in \partial\Omega$, the following conditions hold:
	\begin{itemize}
		\item[(i)] Non-degeneracy: $\Omega\cap B_r(Q)$ is connected and there exists $\sigma\in (0,1)$ so that
		\begin{equation}\label{est.reg.cond}
		|\Omega \cap B_r(Q)| \ge \sigma |B_r(Q)|.
		\end{equation}
		
		\item[(ii)] Quasiconvexity: there exists a convex domain $V = V(Q,r)$ such that $(B_r(Q)\cap \Omega) \subset V$ and $d_{H}(\partial(B_r(Q)\cap \Omega), \partial V) \le \delta r$.
	\end{itemize}
\end{definition}

\begin{remark}
	(1) The non-degeneracy condition (i) is satisfied automatically for Lipschitz domains and $\sigma$ depends only on the Lipschitz constant.
	(2) We may always assume the convex domain $V$ in condition (ii) is the convex hull of $B_r(Q)\cap \Omega$, which is the smallest convex domain containing $B_r(Q)\cap \Omega$. (3) Note that any $C^1$ ($\delta$ is arbitrarily small) or convex domains ($\delta = 0$) are quasiconvex, while a quasiconvex domain with $\delta>0$ is not necessarily $C^1$ or convex.
\end{remark}

The following is the main result of the paper.
\begin{theorem}\label{thm.MP}
	Let $\Omega$ be a bounded Lipschitz domain in $\R^d$ with $d\ge 4$. There exists $\delta_0 >0$, depending only on $d$ and the Lipschitz constant, such that the weak maximum principle (\ref{est.MP}) is true, if $\Omega$ is $(\delta,\sigma,R)$-quasiconvex with $\delta<\delta_0$.
\end{theorem}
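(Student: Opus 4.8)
The plan is to reduce the global maximum principle estimate \eqref{est.MP} to a localized boundary estimate, and then to establish that boundary estimate by comparison with the convex case. First I would recall the standard machinery from Pipher--Verchota and Shen: the weak maximum principle for $\Delta^2 u = 0$ is equivalent to $L^p$ solvability of the biharmonic Dirichlet problem for $p$ close to $\infty$ (equivalently $p'$ close to $1$), which in turn follows from uniform estimates on the biharmonic Green function. Concretely, by writing $u$ via a Green-function representation and using interior estimates, the $L^\infty$ bound on $\nabla u$ reduces to showing that for a solution vanishing appropriately on a boundary ball $\Delta_{2r} = B_{2r}(Q)\cap\partial\Omega$, one has a Hölder-type decay
\begin{equation*}
\Big(\,\frac{1}{r^d}\int_{B_r(Q)\cap\Omega} |\nabla u|^2\,\Big)^{1/2} \le C\Big(\frac{r}{R}\Big)^{\alpha} \Big(\,\frac{1}{R^d}\int_{B_R(Q)\cap\Omega} |\nabla u|^2\,\Big)^{1/2} + (\text{boundary data}),
\end{equation*}
with $\alpha>0$, together with a reverse-Hölder / Meyers-type self-improvement so the $L^2$ average upgrades to an $L^\infty$ bound. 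This part is essentially the route in \cite{KS11} and I would follow it closely; the only new ingredient must come in handling the geometry.

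The heart of the argument is therefore a \emph{perturbation/compactness step}: on each ball $B_r(Q)$, the quasiconvexity hypothesis gives a convex domain $V = V(Q,r)$ with $B_r(Q)\cap\Omega \subset V$ and $d_H(\partial(B_r(Q)\cap\Omega),\partial V)\le\delta r$. I would compare the solution $u$ on $B_r(Q)\cap\Omega$ with the solution $w$ of the biharmonic equation on the convex domain $V$ having the same boundary data (suitably extended), and show that $u - w$ is small in the relevant energy norm, with smallness controlled by $\delta$. For $w$ one has, by Shen's theorem applied on the convex domain $V$ (after rescaling to unit scale), the clean decay estimate with a \emph{universal} constant depending only on $d$. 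Feeding the convex-domain decay back through the perturbation bound yields, for $\delta<\delta_0$ small enough, a decay estimate on $B_r(Q)\cap\Omega$ with exponent slightly worse but still positive — and crucially with constants independent of the particular $Q$ and $r$, hence scale-invariant. Iterating this improved decay across dyadic scales down to the boundary is what produces the pointwise control; the non-degeneracy condition \eqref{est.reg.cond} (automatic from the Lipschitz assumption) is what makes Poincaré/Sobolev inequalities and the energy comparison uniform.

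The main obstacle I anticipate is making the comparison between $u$ on $B_r(Q)\cap\Omega$ and $w$ on the convex domain $V$ genuinely quantitative in $\delta$ for a \emph{fourth-order} equation. Unlike the second-order case, one cannot just use a Lipschitz change of variables flattening the boundary, and the biharmonic operator is not robust under rough perturbations of the domain; the boundary conditions involve both $u$ and $\nabla u$, so extending/transplanting boundary data between $\partial(B_r(Q)\cap\Omega)$ and $\partial V$ while keeping the $\dot{W}^{1,p}$-type norms under control is delicate. I expect one needs: (a) a stable extension operator and a trace/Poincaré estimate on the thin symmetric-difference region $V\setminus(B_r(Q)\cap\Omega)$ whose width is $\le\delta r$, so that the energy of $u-w$ there is $O(\delta)$ times the total energy; and (b) a Caccioppoli-type inequality up to the boundary valid uniformly over the quasiconvex family, perhaps proved by an indirect (normal families / blow-up) argument in which a sequence $\delta_k\to 0$ forces the limit domain to be convex, where Shen's estimate applies — this compactness route is likely the cleanest way to avoid tracking constants by hand. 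Once (a) and (b) are in place, absorbing the $O(\delta)$ error for $\delta<\delta_0$ and iterating is routine, and combined with the known results (i)--(ii) this gives Theorem~\ref{thm.MP}.
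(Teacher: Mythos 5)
Your overall architecture (perturbation off convex domains for a local boundary estimate, then a Green-function representation for the global bound) matches the paper's, but there is a genuine gap in the reduction step, which you dismiss as ``essentially the route in \cite{KS11}.'' That route deduces the maximum principle from solvability of the regularity problem $(R)_p$ for some $p>d-1$, and this is exactly what is unavailable here for $d\ge 4$: in a quasiconvex Lipschitz domain one only has $(R)_2$ (and $(R)_{2+\e}$). The passage from the local H\"{o}lder/decay estimate to $\norm{\nabla u}_{L^\infty(\Omega)}\le C\norm{\nabla u}_{L^\infty(\partial\Omega)}$ is therefore not routine. The paper's Sections 4--5 supply the missing argument: first, pointwise bounds on $G$, $\nabla_xG$, $\nabla_x\nabla_yG$ with weights $\delta(x)^{\e}\delta(y)^{\e}$ are derived from the local $C^{1,\e}$ estimate by a duality argument; then the Poisson kernel terms are controlled by splitting $\partial\Omega$ into dyadic boundary balls $\Delta_{k\ell}$, applying $(R)_2$ on each small Lipschitz piece $D_{k\ell}$, and using the coarea formula to convert the resulting surface integral of $|\nabla^2 G|$ into a volume integral that the weighted Green-function bounds can absorb; finally the third-order term $\frac{\partial}{\partial n}\Delta_QG$ is handled by Verchota's starlike-domain integration by parts, converting the normal derivative of the harmonic function $\Delta_QG(Q,x)$ into tangential derivatives that land on the data. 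None of this appears in your sketch, and without it the estimate $\int_{\partial\Omega}|\Delta_QG(Q,x)|\,d\sigma(Q)\le C\delta(x)$ --- the crux of the whole theorem --- is unproved.

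On the local estimate your comparison idea is essentially the paper's: extend $u$ by zero, solve the biharmonic problem in the convex hull $V$ with the transplanted data, and exploit the vanishing of $u$ and $\nabla u$ on $\Delta_r$ together with Poincar\'{e} on the $\delta r$-thick layer to make $\nabla^2(u-v)$ small in $L^2$ (the paper gets $O(\delta^{\e})$ via Meyers, not $O(\delta)$). Two caveats: the convex-domain input you need is not Shen's maximum principle but the Mayboroda--Maz'ya pointwise bound $\norm{\nabla^2 v}_{L^\infty}\lesssim(\fint|\nabla^2 v|^2)^{1/2}$, which is what feeds the good term in the iteration; and your proposed compactness/blow-up argument ($\delta_k\to0$ forcing a convex limit) is precisely what the paper avoids --- it instead runs Shen's real variable argument (Theorem~\ref{thm.RealVar}) on the pair $(|\nabla^2 v|,|\nabla^2(v-u)|)$ to get the reverse H\"{o}lder inequality with $p>d$ quantitatively. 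The compactness route is not obviously wrong (it is the Caffarelli--Peral/Byun--Wang scheme), but for a fourth-order problem with clamped boundary conditions the stability of solutions under Hausdorff convergence of domains is itself a nontrivial claim you would have to prove.
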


Up to now we have not explained how we define the boundary value of the biharmonic equation, and in what sense we may understand $\nabla u$, restricted to $\partial\Omega$, on the right-hand side of (\ref{est.MP}). This question is essentially related to the solvability of Dirichlet problems in Lipschitz domains \cite{DKV86,PV91}. 

Let $\Omega$ be a bounded Lipschitz domain. For each $Q\in \partial\Omega$, there is a non-tangential ``cone'' $\Gamma(Q) = \{ x\in \Omega:  |x-Q|\le (1+\alpha)\text{dist}(x,\partial\Omega) \}$ where $\alpha>0$. If $v$ is a function in $D$, the non-tangential maximal function of $v$ is defined by
\begin{equation*}
(v)^*(Q) = \sup_{x\in \Gamma(Q)} |v(x)|, \qquad Q\in \partial\Omega.
\end{equation*}
%Let $n(Q)$ be the unit outward normal at $Q\in \partial\Omega$. Since $\Omega$ is a Lipschitz domain, $n(Q)$ is well-defined almost everywhere with respect to the surface measure $d\sigma$. Then, the solvability of the Dirichlet problem could be formulated as follows .
We will use $\WA^{1,p}(\partial \Omega)$ to denote the completion of the set of arrays of functions
\begin{equation*}
\{(f,g): \phi \in C_0^\infty(\R^d), f = \phi|_{\partial\Omega}, g = \nabla \phi|_{\partial\Omega} \}
\end{equation*}
under the scale-invariant norm on $\partial\Omega$
\begin{equation*}
\norm{(f,g)}_{\WA^{1,p}(\partial\Omega)} := |\partial\Omega|^{\frac{1}{1-d}} \norm{f}_{L^p(\partial\Omega)} +  \norm{g}_{L^p(\partial\Omega)}.
\end{equation*}
Define $\WA^{2,p}(\partial\Omega) = \{(f,g)\in \WA^{1,p}(\partial\Omega): g\in W^{1,p}(\partial\Omega;\R^d) \}.$

We say that the $L^p$ Dirichlet problem, denoted by $(D)_p$, is uniquely solvable if given any $(f,g)\in \WA^{1,p}(\partial\Omega)$, there exists a unique function $u$ so that
\begin{equation}\label{eq.Dp}
\left\{
\begin{aligned}
&\Delta^2 u = 0, \quad \txt{in } \Omega, \\
& \lim_{\Gamma(Q) \ni x\to Q} u(x) = f(Q),  \quad \txt{a.e. } Q\in \partial\Omega,\\
&  \lim_{\Gamma(Q) \ni x\to Q} \nabla u(x) = g(Q), \quad \txt{a.e. } Q\in \partial\Omega.
\end{aligned}
\right.
\end{equation}
Moreover, the solution $u$ satisfies
\begin{equation}\label{est.Dp}
\norm{(\nabla u)^*}_{L^p(\partial\Omega)} \le C \norm{g}_{L^p(\partial\Omega)}.
\end{equation}
Observe that the maximum principle (\ref{est.MP}) in fact is corresponding to $(D)_\infty$ and for a.e. $Q\in \partial\Omega$, $\nabla u(Q)$ should be understood as the non-tangential limit of $\nabla u(x)$ as $\Gamma(Q)\ni x\to Q$. 

\begin{remark}\label{rmk.D2Dn}
	We mention that since $\Omega$ is a Lipschitz domain, the normal $n(Q)$ exists for a.e. $Q\in \partial\Omega$. Thus, the second boundary value condition $\nabla u(Q) = g(Q)$ in (\ref{eq.Dp}) actually may be replaced by the normal derivative $\frac{\partial}{\partial n} u(Q) = h(Q):= n(Q)\cdot g(Q)$ in the sense of non-tangential limit. This can be seen by noticing the orthogonal decomposition $g(Q) = \nabla_{\tan} f(Q) + n(Q) h(Q)$ and $|g(Q)|^2 = |\nabla_{\tan} f(Q)|^2 + |h(Q)|^2$. Hence, the tangential information of $f$ is contained in $g$ and therefore $f$ is not needed on the right-hand side of (\ref{est.Dp}). For the same reason, there is no $f$ on the right-hand side of (\ref{est.Rp}).
\end{remark}

We say that the $L^p$ regularity problem, denoted by $(R)_p$, is uniquely solvable if given any $(f,g)\in W^{2,p}(\partial\Omega)$, there exists a unique function $u$ satisfying (\ref{eq.Dp}) and
\begin{equation}\label{est.Rp}
\norm{(\nabla^2 u)^*}_{L^p(\partial\Omega)} \le C \norm{\nabla_{\tan} g}_{L^p(\partial\Omega)}.
\end{equation}

The solvability of $(D)_p$ and $(R)_p$ has been studied in many references; see, e.g., \cite{DKV86,V87,V90,PV91,PV92,DKPV97,S06,S06-2,KS11,KS11-2}. We mention a few results here. The solvability of $(D)_p$ for $2-\e < p<2+\e$ was discovered by Dahlberg, Kenig and Verchota in \cite{DKV86} for all dimensions. The left endpoint $p = 2-\e$ is sharp in the sense that for any $p<2$, there exits a Lipschitz domain so that $(D)_p$ is not solvable, even in dimension two \cite[Lemma 5.1]{DKV86}. The optimal range for $p>2$ in general Lipschitz domains is a much more complicated problem (sharp ranges are known for $d \le 7$) and is still open for $d\ge 8$ \cite{S06}.

For regularity problem, Verchota in \cite{V90} first proved the solvability of $(R)_p$ with $2-\e < p<2+\e$ for any $d\ge 2$, while the right endpoint $2+\e$ is sharp in general Lipschitz domains. Among others, Kilty and Shen then in \cite{KS11} established the duality relation between $(D)_p$ and $(R)_q$ in any Lipschitz domains, namely, $(D)_p$ is solvable if and only if $(R)_q$ is solvable for $1/p+1/q = 1, 1<p<\infty$. In particular, this property, combined with Theorem \ref{thm.MP}, leads to
\begin{corollary}
	Let $\Omega$ be a bounded Lipschitz domain in $\R^d$ with $d\ge 4$. There exists $\delta_0 >0$, depending only on $d$ and the Lipschitz constant, such that if $\Omega$ is $(\delta, \sigma,R)$-quasiconvex with $\delta < \delta_0$, then $(D)_p$ and $(R)_q$ are uniquely solvable, for all $2-\e < p \le \infty, 1<q<2+\e$.
\end{corollary}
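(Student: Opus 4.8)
The plan is to deduce this corollary from Theorem \ref{thm.MP} together with three ingredients already available in the literature: the endpoint solvability near $p=2$ that holds in \emph{every} bounded Lipschitz domain, a real‑variable self‑improvement that propagates solvability of $(D)_p$ from the two endpoints $p=2$ and $p=\infty$ across the intermediate range, and the Kilty--Shen duality between $(D)_p$ and $(R)_q$.

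For the Dirichlet problem I would argue as follows. By \cite{DKV86}, $(D)_p$ is uniquely solvable in an arbitrary bounded Lipschitz domain of any dimension whenever $2-\e<p<2+\e$; this already handles the subrange $2-\e<p\le 2$ with no quasiconvexity hypothesis. At the other end, $p=\infty$: under $\delta<\delta_0$ the bound (\ref{est.MP}) furnished by Theorem \ref{thm.MP} is exactly the a priori estimate for $(D)_\infty$, while existence of a biharmonic $u$ attaining the prescribed non‑tangential data $(f,g)\in \WA^{1,\infty}(\partial\Omega)$ comes from the standard layer‑potential construction built on the $L^2$ theory. It remains to fill $2<p<\infty$, and here I would invoke the real‑variable argument of Shen (as in \cite{S06}, in the form adapted to $\Delta^2$ in \cite{KS11-2}): knowing $(D)_2$ and the $L^\infty$ bound (\ref{est.MP}), a good‑$\lambda$/self‑improvement argument upgrades them to $\norm{(\nabla u)^*}_{L^p(\partial\Omega)}\le C\norm{g}_{L^p(\partial\Omega)}$ for every $2<p<\infty$, interpreted in the $\WA^{1,p}(\partial\Omega)$ scale. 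Altogether this yields $(D)_p$ for all $2-\e<p\le\infty$.

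For the regularity problem I would then pass through duality. By \cite{KS11}, in any bounded Lipschitz domain $(D)_p$ is solvable if and only if $(R)_q$ is solvable with $1/p+1/q=1$, $1<p<\infty$; applying this with $p\in(2,\infty)$ converts the solvability of $(D)_p$ just obtained into solvability of $(R)_q$ for all $q\in(1,2)$. Combining with Verchota's theorem \cite{V90}, which gives $(R)_q$ in every bounded Lipschitz domain for $2-\e<q<2+\e$, we obtain $(R)_q$ for the full range $1<q<2+\e$. Uniqueness is routine: for $(D)_p$ with $p\ge 2$, a solution with zero data has $(\nabla u)^*\in L^p(\partial\Omega)\subset L^2(\partial\Omega)$ since $\partial\Omega$ is bounded, so it reduces to uniqueness for $(D)_2$; for $2-\e<p<2$ uniqueness is part of \cite{DKV86}; and uniqueness for $(R)_q$ follows from solvability of the dual problem $(D)_{q'}$ via the usual Green‑identity pairing (equivalently, from \cite{V90,KS11}).

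The one step I expect to demand genuine care rather than mere bookkeeping is the self‑improvement from the two endpoints $(D)_2$ and $(D)_\infty$ to intermediate $p$: one must set it up for the sublinear operator sending boundary data in the $\WA^{1,p}$ scale to $(\nabla u)^*$, and verify that the geometric hypotheses needed by the real‑variable lemma — essentially the non‑degeneracy (\ref{est.reg.cond}) and interior estimates for $\Delta^2$ — are in force here. In this circle of ideas, however, that argument is by now standard, so I anticipate no essential obstacle.
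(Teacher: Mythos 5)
Your proposal is correct and follows essentially the same route the paper intends: $(D)_p$ near $p=2$ from \cite{DKV86}, $(D)_\infty$ from Theorem \ref{thm.MP}, the intermediate range $2<p<\infty$ by interpolating (via Shen's real-variable/good-$\lambda$ scheme) between the $L^2$ bound on $(\nabla u)^*$ and the $L^\infty$ bound, and then the Kilty--Shen duality plus Verchota's $(R)_{2+\e}$ result to cover $1<q<2+\e$. The paper states the corollary as an immediate consequence of the duality combined with Theorem \ref{thm.MP} without writing out these steps, and your writeup supplies exactly the standard details, including correctly flagging the endpoint-to-intermediate self-improvement as the only step needing care.
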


\subsection{Main idea of proof}
The proof of Theorem \ref{thm.MP} follows the generic framework contained in \cite{PV93,S95} and takes a new generic path which may apply unifiedly to other equations or systems. For $C^1$ \cite{PV93} or convex domains \cite{KS11}, the proof of the weak maximum principle relies essentially on the solvability of $(R)_p$ for some $p>d-1$, which may fails for $d\ge 4$. However, our new path is only based on $(R)_2$, which is always true, and a reverse H\"{o}lder inequality (Calder\'{o}n-Zygmund estimate).

Heuristically, if the weak maximum principle holds for a Lipschitz domain $\Omega$, we may expect the following local $L^\infty$ property: given any $Q\in \partial\Omega, 0<r<R$, if $u\in W^{2,2}(D_r)$ is a weak solution of
\begin{equation}\label{eq.local}
\left\{
\begin{aligned}
&\Delta^2 u = 0, \quad \txt{in } D_r, \\
&u = 0,\ \nabla u = 0, \quad \txt{on } \Delta_r,
\end{aligned}
\right.
\end{equation}
where $D_r = \Omega \cap B_r(Q)$ and $\Delta_r = \partial\Omega \cap B_r(Q)$, then $|\nabla u|$ is uniformly bounded in $D_{r/2}$. In analysis, oftentimes we will ask for a slightly stronger property, namely, for any $x,y\in D_{r/2}$,
\begin{equation}\label{est.C1a}
|\nabla u(x) - \nabla u(y)| \le C \bigg( \frac{|x-y|}{r} \bigg)^{\e} \bigg( \fint_{D_{r}} |\nabla u|^2 \bigg)^{1/2}.
\end{equation}
Note that this $C^{1,\e}$ continuity can be compared to the boundary De Giorgi-Nash estimate for the second-order elliptic equations. In this paper, we will develop a general scheme to show that the property (\ref{est.C1a}), together with $(R)_2$ regularity, implies the weak maximum principle. The main idea of this scheme is to make use of the Poisson integral for biharmonic functions
\begin{equation}\label{eq.Poisson}
\begin{aligned}
u(x) & = \int_{\partial \Omega} \Delta_Q G(Q,x) \frac{\partial}{\partial n}u(Q) d\sigma(Q)  \\
&\qquad + \int_{\partial \Omega}\frac{\partial}{\partial n} \Delta_Q G(Q,x) u(Q) d\sigma(Q),
\end{aligned}
\end{equation}
where $G$ is the Green's function, and combine (\ref{est.C1a}) and $(R)_2$ to estimate the Green's function near the boundary. Some classical techniques from \cite{PV93,S95} will also be critical to get rid of the higher-order derivatives of $G$.
We point out that, with suitable adjustments, the above scheme may be applied to all dimensions (see Remark \ref{rmk.d23}) and to elliptic systems, Stokes systems\footnote{The behavior of the biharmonic functions is very like the solutions of Stokes systems. The weak maximum principle for Stokes system in Lipschitz domains in $\R^3$ was proved in \cite{S95}. But this problem is quite open for $d\ge 4$, as far as we know, even no counterexamples are known. Our approach in this paper may be used to show the weak maximum principle for Stokes system with $d\ge 4$ in Lipschitz domains with sufficiently small Lipschitz constant. }, etc.

The problem now is reduced to the property (\ref{est.C1a}). In view of the Sobolev embedding theorem, it is sufficient to show the following boundary reverse H\"{o}lder inequality for some $p>d$
\begin{equation}\label{est.reverse}
\bigg( \fint_{D_{r/2}} |\nabla^2 u|^p \bigg)^{1/p} \le C\bigg( \fint_{D_r} |\nabla^2 u|^2 \bigg)^{1/2},
\end{equation}
where $u\in W^{2,2}(D_r)$ is the weak solution of (\ref{eq.local}). For lower dimensions ($d = 2,3$), this can be shown in any Lipschitz domains; see Remark \ref{rmk.d23}. For higher dimensions ($d\ge 4$), (\ref{est.reverse}) is not true for $p>d$ in general Lipschitz domains. Therefore, some additional geometric condition for the domain is necessary. And it turns out that the quasiconvexity is exactly the right condition (for (\ref{est.reverse}), the domain does not need to be Lipschitz).

%In this paper, we will show that given any $p>2$, (\ref{est.reverse}) holds if $\Omega$ is $(\delta,\sigma,R)$-quasiconvex with sufficiently small $\delta$; see Theorem \ref{thm.Reverse Holder}. 

In order to prove (\ref{est.reverse}) for some $p>d$ in quasiconvex domains, we develop a new boundary perturbation approach, of independent interest, using Meyers' estimate (see Theorem \ref{thm.Meyer}) and a real variable method of Shen \cite{S05,S06-2,S07,S18} (see Theorem \ref{thm.RealVar} for a simplified version). The philosophy of the perturbation approach is as follows: (1) First, Mayboroda and Maz'ya proved in \cite{MM08} that in any convex domains, the Hessian $|\nabla^2 u|$ of a solution of (\ref{eq.local}) is uniformly bounded (see Theorem \ref{thm.Hessian.Bound}). (2) Then it is possible to derive the weaker estimate (\ref{est.reverse}) when the boundary of a domain can be locally viewed as perturbations of convex domains at all small scales. We mention that another well-known boundary perturbation approach applied to Reifenberg flat domains, originating from Caffarelli and Peral \cite{CP98}, was developed by Wang and Byun in \cite{BW04} (more application may be found in, e.g., \cite{BW05,BW08,BW08-2,BW10,MP12}), which uses compactness and a proof of contradiction (as a result, the dependence of constants cannot be specified). Their approach was also used in \cite{JLW10,JLW11,BKSW15} in the setting of quasiconvex domains. Our new approach in this paper, quite different from theirs, is straightforward and do not use compactness or the proof of contradiction. Therefore, all the constants in the estimates can be computed explicitly, if necessary. In particular, with a careful examination, one can actually know how $\delta_0$ in Theorem \ref{thm.MP} depends on the parameters, and how small it needs to be. We would also like to say a few words about Shen's real variable method as a key ingredient in our proof. This method was also inspired by \cite{CP98} and can be viewed as a refined and dual version of the Calder\'{o}n-Zygmund lemma \cite{S18}. Previouly, it has been used to deal with problems in different motivations. In this paper, for the first time, we show that Shen's real variable method is also a powerful (simple, well-organized and quantitative) tool for boundary perturbation problems and we expect many applications of our new approach for regularity theory in non-smooth domains.

Finally, as we have indicated, the property (\ref{est.C1a}) is slightly stronger than the boundness of $|\nabla u|$, which means it might implies the $C^{1,\alpha}$ continuity of the solution in appropriate setting. Actually, we will show that under the same condition as Theorem \ref{thm.MP}, a biharmonic function is indeed in $C^{1,\alpha}(\overline{\Omega})$ ($0\le \alpha<\e, C^1 = C^{1,0}$), i.e., $\nabla u$ is $C^{\alpha}$-H\"{o}lder continuous up to the boundary in the classical way, if the boundary value is $C^{1,\alpha}$ in proper sense. Again, the approach we use here can be carried out for lower-dimensional cases in arbitrary Lipschitz domains, as well as elliptic systems, Stokes systems, etc.

The organization of the paper is as follows. In Section 2, we introduce the weak solutions, Caccioppoli inequality and Meyers' estimate. In Section 3, we prove the reverse H\"{o}lder inequality in quasiconvex domains. The pointwise estimates of Green's function is established in Section 4. The main result, Theorem \ref{thm.MP}, is proved in Section 5. Finally, the classical solutions in $C^{1,\alpha}(\overline{\Omega})$ is obtained in Section 6.
%Once (\ref{est.reverse}) with $p>d$ is proved, we may use it to derive the pointwise estimates of the Green's function $G(x,y)$ and its gradient $\nabla G(x,y)$. This step is achieved by a suitable duality argument. Then, we make use of the Poisson integral for biharmonic functions
%\begin{equation}\label{eq.Poisson}
%\begin{aligned}
%u(x) & = \int_{\partial \Omega} \Delta_Q G(Q,x) \frac{\partial}{\partial n}u(Q) d\sigma(Q)  \\
%&\qquad + \int_{\partial \Omega}\frac{\partial}{\partial n} \Delta_Q G(Q,x) u(Q) d\sigma(Q).
%\end{aligned}
%\end{equation}
%Since the second derivatives of $G(x,y)$ are involved at this stage, a key step here is to use $(R)_2$ regularity, combined with the coarea formula, to estimate $\norm{\Delta_Q G(Q,x)}_{L^1(\partial\Omega)}$ (we do not need $(R)_{2+\e}$). For the second integral of (\ref{eq.Poisson}), the third derivatives are even worse. However, a familiar technique allows us to convert the conormal derivative to some tangential derivatives and use the integration by parts on the boundary.
\section{Preliminaries}

\subsection{Weak solutions}
We first give a proper interpretation for the weak solution of (\ref{eq.local}). Let $\Omega$ be a bounded domain and $Q\in \partial \Omega$. Let $D_r = D_r(Q) = \Omega\cap B_r(Q)$ and $\Delta_r = \Delta_r(Q) = \partial\Omega \cap B_r(Q)$ (these notations will be used throughout). We say $u\in W^{2,2}(D_r)$ is a weak solution of
\begin{equation}\label{eq.weaksol}
\left\{
\begin{aligned}
&\Delta^2 u = 0, \quad \txt{in } D_r \\
&u = 0,\ \nabla u = 0, \quad \txt{on } \Delta_r,
\end{aligned}
\right.
\end{equation}
if for any $\phi \in C_0^\infty(D_r)$,
\begin{equation*}
\int_{D_r} \Delta u \Delta \phi = 0,
\end{equation*}
and for any $\psi\in C_0^\infty(B_r(Q))$, $u\psi \in W_0^{2,2}(D_r)$. Recall that for any open set $E$, $W_0^{2,2}(E)$ is the closure of $C_0^\infty(E)$ under the norm of $W^{2,2}(E)$.

By the above definition, it is not difficult to see that the solution $u$ can be extended to a function $\widetilde{u}$ in $W^{2,2}(B_r(Q))$ by zero-extension, namely
\begin{equation*}
\widetilde{u}(x) = 
\left\{
\begin{aligned}
&u(x), \qquad &x&\in D_r, \\
&0, \qquad & x&\in B_r(Q)\setminus D_r.
\end{aligned}
\right.
\end{equation*}
This property will be useful for us.

\subsection{Caccioppoli inequality}
For the Caccioppoli inequality and the following Meyer's estimate, we require that the domain $\Omega$ satisfies the exterior non-degeneracy condition: there exists a constant $c>0$ such that for any $Q\in \partial\Omega$ and $|B_r(Q)\setminus \Omega| \ge c|B_r(Q)|$.

Before we proceed, we first show that quasiconvex domains always satisfy the exterior non-degeneracy condition.

\begin{lemma}
	A $(\delta,\sigma,R)$-Quasiconvex domain ($\delta<1$) must satisfy the exterior non-degeneracy condition.
\end{lemma}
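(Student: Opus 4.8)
The plan is to use only the quasiconvexity condition (ii) of Definition~\ref{def.quasiconvex}: a supporting hyperplane to the convex set $V$ at a point close to $Q$ will carve out of $B_r(Q)$ a definite-size ``cap'' that lies entirely outside $\Omega$. Fix $Q\in\partial\Omega$. It suffices to establish $|B_r(Q)\setminus\Omega|\ge c\,|B_r(Q)|$ for $0<r<R$; the remaining range $r\ge R$ is routine given that $\Omega$ is bounded (for $R\le r$ comparable to $\operatorname{diam}\Omega$ one invokes the already-proved bound at scale $R/2$, and for larger $r$ one uses $\Omega\subset B_r(Q)$ together with an elementary volume comparison). So fix $0<r<R$, write $D_r=\Omega\cap B_r(Q)$, and let $V=V(Q,r)$ be the convex domain from (ii); by the remark following Definition~\ref{def.quasiconvex} we may take $V=\operatorname{conv}(D_r)$, so that $D_r\subset V$ and $d_H(\partial D_r,\partial V)\le\delta r$.

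Since $Q\in\partial\Omega$, the open set $D_r$ is nonempty, so $V$ is a bounded open convex set and $Q\in\partial D_r$. By the Hausdorff bound and compactness of $\partial V$ there exists $Q'\in\partial V$ with $|Q-Q'|\le\delta r$; being a bounded open convex set, $V$ has a supporting hyperplane at $Q'$, i.e.\ there is a unit vector $\nu$ with $(x-Q')\cdot\nu\le 0$ for every $x\in V$, hence for every $x\in D_r$. Consequently, if $x\in B_r(Q)$ satisfies $(x-Q)\cdot\nu>\delta r$, then
\[
(x-Q')\cdot\nu=(x-Q)\cdot\nu+(Q-Q')\cdot\nu>\delta r+(Q-Q')\cdot\nu\ge\delta r-|Q-Q'|\ge 0,
\]
so $x\notin V$ and therefore $x\notin\Omega$. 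Thus the cap $\mathcal C:=\{x\in B_r(Q):(x-Q)\cdot\nu>\delta r\}$ satisfies $\mathcal C\subset B_r(Q)\setminus\Omega$.

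It remains to bound $|\mathcal C|$ from below. Translating $Q$ to the origin and rescaling by $r$, one has $|\mathcal C|=r^d\,\kappa(d,\delta)$, where $\kappa(d,\delta)=|\{y\in B_1(0):y\cdot\nu>\delta\}|$ depends only on $d$ and $\delta$ and is strictly positive precisely because $\delta<1$ (so that the hyperplane $\{y\cdot\nu=\delta\}$ genuinely meets $B_1(0)$). Hence $|B_r(Q)\setminus\Omega|\ge\kappa(d,\delta)\,r^d=c\,|B_r(Q)|$ with $c=\kappa(d,\delta)/|B_1(0)|>0$. I do not expect a genuine obstacle here; the only steps requiring a little attention are the verification that $V$ is honestly a bounded convex body admitting a supporting hyperplane at $Q'\in\partial V$ (which uses that $Q\in\partial\Omega$ forces $D_r$ to be nonempty and open) and the bookkeeping showing that $\delta<1$ is exactly the condition keeping the cap $\mathcal C$ non-degenerate. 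Everything else is elementary geometry of balls and half-spaces.
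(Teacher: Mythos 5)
Your argument is correct and follows essentially the same route as the paper's proof: use the Hausdorff distance to find a point of $\partial V$ within $\delta r$ of $Q$, take a supporting hyperplane of the convex set $V$ there, and observe that the resulting spherical cap of $B_r(Q)$ lies outside $\Omega$ and has volume a definite fraction of $|B_r(Q)|$ precisely because $\delta<1$. Your explicit computation of the cap volume and the remark on the range $r\ge R$ are just slightly more detailed bookkeeping of the paper's ``simple geometrical observation.''
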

\begin{proof}
	Let $\Omega$ be a $(\delta,\sigma,R)$-Quasiconvex domain with $\delta<1$. By definition, for any $Q\in \partial\Omega$ and $0<r<R$, there exists a convex domain $V$ so that $d_H(\Omega\cap B_r(Q), \partial V) \le \delta r$. Hence, there exists a point $z\in \partial V$ so that $|z-Q|\le \delta r$ and thus $\text{dist}(z,\partial B_r(Q)) \ge (1-\delta)r$. Because $V$ is convex, we can find a ``tangent'' plane at $z\in \partial V$, so that $V$ lies in one side of the ``tangent'' plane, namely, $V\subset \{ x\in \R^d: n\cdot (x-z) > 0 \}$ for some unit vector $n$. Since $\Omega\cap B_r(Q) \subset V$, we have $B_r(Q)\setminus \Omega \supset B_r(Q)\cap \{x\in \R^d: n\cdot (x-z) < 0 \}$. Now, the desired estimate $|B_r(Q)\setminus \Omega| \ge c|B_r(Q)|$ follows from a simple geometrical observation and the facts $z\in B_r(Q)$ and $\text{dist}(z,\partial B_r(Q)) \ge (1-\delta)r$.
\end{proof}

\begin{theorem}[Caccioppoli inequality] \label{lem.Caccioppoli}
	Let $\Omega\subset \R^d$ satisfy the exterior non-degeneracy condition. Let $r \in (0,\text{\em diam}(\Omega))$ and $Q\in \partial\Omega$. Suppose $u\in W^{2,2}(D_r)$ is a weak solution of (\ref{eq.weaksol}).
	Then,
	\begin{equation}\label{est.Caccioppoli}
	\int_{D_{r/4}(Q)} |\nabla^2 u|^2 \le  \frac{C}{\rho^2} \int_{D_{r/2}(Q)} |\nabla u|^2 \le \frac{C}{\rho^4} \int_{D_{r}(Q)} |u|^2,
	\end{equation}
	where $C$ depends only on the dimension $d$ and $c$ only.
\end{theorem}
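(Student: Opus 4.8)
The plan is to prove the Caccioppoli inequality (\ref{est.Caccioppoli}) by the standard cutoff-function argument, exploiting the key structural fact noted in the preliminaries: because $u$ is a weak solution of (\ref{eq.weaksol}), the zero-extension $\widetilde u \in W^{2,2}(B_r(Q))$, and more importantly, for any $\psi \in C_0^\infty(B_r(Q))$ the product $u\psi$ lies in $W_0^{2,2}(D_r)$. This last property is exactly what allows us to use cutoff functions that do not vanish near $\Delta_r$; the vanishing boundary data is encoded in the membership in $W_0^{2,2}$ rather than having to be arranged by hand. So the first step is to fix a smooth cutoff $\eta$ with $\eta \equiv 1$ on $B_{r/4}(Q)$, $\operatorname{supp}\eta \subset B_{r/2}(Q)$, and $|\nabla^k \eta| \lesssim r^{-k}$ for $k \le 2$, and then to justify that $\phi := u\eta^4$ (or a similar power — the exponent is chosen large enough to absorb the derivatives hitting $\eta$) is an admissible test function, i.e.\ $\phi \in W_0^{2,2}(D_r)$, so that $\int_{D_r}\Delta u\,\Delta\phi = 0$ holds by density.

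Next I would expand $\Delta(u\eta^4)$ using the product rule: $\Delta(u\eta^4) = \eta^4 \Delta u + 2\nabla u\cdot\nabla(\eta^4) + u\Delta(\eta^4)$, and substitute into $\int \Delta u\,\Delta(u\eta^4) = 0$. This yields $\int \eta^4 |\Delta u|^2 = -\int \Delta u\,[2\nabla u\cdot\nabla(\eta^4) + u\Delta(\eta^4)]$. The right-hand side contains the term $\int \eta^4 |\Delta u|^2$'s ``dangerous'' partner terms involving $\Delta u$ paired with lower-order quantities times derivatives of $\eta$; estimating $|\nabla(\eta^4)| \lesssim \eta^3 r^{-1}$ and $|\Delta(\eta^4)| \lesssim \eta^2 r^{-2}$ and applying Cauchy–Schwarz with a small parameter $\epsilon$, one absorbs $\epsilon\int\eta^4|\Delta u|^2$ into the left side and is left with $\int_{D_{r/2}} |\nabla u|^2/\rho^2$ on the right (here $\rho = r$; I note the paper writes $\rho$, presumably meaning $r$, in the statement). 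This gives control of $\int \eta^4|\Delta u|^2$, and hence of $\int_{D_{r/4}}|\Delta u|^2$, by $r^{-2}\int_{D_{r/2}}|\nabla u|^2$. To upgrade from $|\Delta u|$ to the full Hessian $|\nabla^2 u|$, I would use the fact that $\widetilde u \in W_0^{2,2}$ of a ball, on which $\|\nabla^2 v\|_{L^2} \lesssim \|\Delta v\|_{L^2}$ for $v \in W_0^{2,2}$ — equivalently, integrate by parts twice to convert $\int |\nabla^2(u\eta^2)|^2$ into $\int |\Delta(u\eta^2)|^2$ plus lower order (this identity is exact for compactly supported functions, via $\int \partial_{ij}v\,\partial_{ij}v = \int \partial_{ii}v\,\partial_{jj}v$), then expand $\Delta(u\eta^2)$ and absorb as before.

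For the second inequality in (\ref{est.Caccioppoli}), namely $\int_{D_{r/2}}|\nabla u|^2 \le C\rho^{-2}\int_{D_r}|u|^2$, I would run the same cutoff scheme one step lower: test with $\phi = u\eta^2$ against $\int \eta^2|\nabla^2 u|^2$-type quantities, or more directly integrate by parts in $\int_{D_r} |\nabla(u\eta)|^2 = -\int u\eta\,\Delta(u\eta)$ and use the first estimate to control the $|\nabla^2 u|$ terms that appear — chaining the two scales $r/2 \subset (r/2+r)/2 \subset \dots \subset r$ via a standard iteration (or Widman's hole-filling trick) to avoid circularity. The exterior non-degeneracy hypothesis does not actually enter the computation directly here; its role is to guarantee, via a Poincaré-type inequality on $B_r(Q)$ for functions in $W_0^{2,2}(D_r)$ (whose zero-extensions vanish on a set $B_r\setminus\Omega$ of proportional measure), that the lower-order terms can be re-expressed and the constants depend only on $d$ and $c$ — I would invoke this Poincaré inequality as the mechanism converting $\|u\|_{L^2}$ bounds into $\|\nabla u\|_{L^2}$ and $\|\nabla^2 u\|_{L^2}$ bounds with controlled constants. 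The main obstacle I anticipate is bookkeeping rather than conceptual: ensuring all intermediate test functions genuinely lie in $W_0^{2,2}(D_r)$ (this is where the definition's clause ``$u\psi \in W_0^{2,2}(D_r)$'' is indispensable) and carefully tracking that the absorption of high-order terms and the hole-filling iteration produce a constant depending only on $d$ and $c$, with no hidden dependence on $\Omega$ or $r$.
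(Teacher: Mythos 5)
Your proposal is correct and is essentially the argument the paper is implicitly relying on: the paper does not prove this theorem itself but cites \cite[Corollary 23]{B16}, adding only the remark that the exterior non-degeneracy condition enters solely through the Poincar\'{e} inequality needed for the first inequality in (\ref{est.Caccioppoli}) --- which is exactly where your argument uses it, to convert the unavoidable $r^{-4}\int |u|^2$ term produced by the cutoff into $r^{-2}\int |\nabla u|^2$. The remaining ingredients you identify (admissibility of $u\eta^k$ as a test function via the clause $u\psi\in W_0^{2,2}(D_r)$ in the definition of weak solution, the identity $\int |\nabla^2 v|^2=\int|\Delta v|^2$ for $v\in W_0^{2,2}$ to pass from $\Delta u$ to the full Hessian, and the hole-filling iteration for the second inequality) are the standard and correct ones.
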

This theorem has been proved, for example, in \cite[Corollary 23]{B16}. The exterior non-degeneracy condition is only necessary for the first inequality of (\ref{est.Caccioppoli}), in order to use the Poincar\'{e} inequality.

\subsection{Meyers' estimate}
The Meyers' estimate for second order elliptic equations are well-known. In this paper, we will use a version of Meyer's estimate for biharmonic functions.
\begin{theorem}[Meyers' estimate]\label{thm.Meyer}
	Let $\Omega$ satisfy the exterior non-degeneracy condition. Let $u \in W^{2,2}(D_r)$ be the weak solution of (\ref{eq.weaksol}).
	Then there exists some $p_0>2$, depending only on $d$ and $c$, so that
	\begin{equation*}
	\bigg( r^{-d} \int_{D_{r/2}} |\nabla^2 u|^{p_0} \bigg)^{1/{p_0}} \le C\bigg(r^{-d} \int_{D_r} |\nabla^2 u|^2 \bigg)^{1/2},
	\end{equation*}
	where $C$ depends only on $d$ and $c$.
\end{theorem}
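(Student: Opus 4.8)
The plan is to prove a boundary reverse Hölder (self-improving) inequality for $\nabla^2 u$ by combining a Caccioppoli-type inequality with the Sobolev–Poincaré inequality and a standard Gehring-type lemma. First I would work at an arbitrary boundary ball $B_s(P)$ with $P\in\partial\Omega$ and $B_s(P)\subset B_r(Q)$, and at interior balls $B_s(x)\subset D_r$ in the usual way. The key local estimate is a Caccioppoli inequality on annuli: for a suitable cutoff, one controls $\int_{B_{s/2}}|\nabla^2 u|^2$ by $s^{-2}\int_{B_s}|\nabla u|^2 + s^{-4}\int_{B_s}|u|^2$ (on the zero-extension $\widetilde u$ at boundary balls). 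This is exactly the content of Theorem \ref{lem.Caccioppoli} localized; its proof uses the weak formulation tested against $\psi^4 u$ (or $\psi^4(u-c)$ in the interior) and the exterior non-degeneracy condition through the Poincaré inequality on $B_s(P)\setminus\Omega$, which is how the zero boundary data enters.

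Next I would convert this into a reverse Hölder inequality with a gain in integrability on the right. On a boundary ball, since $\widetilde u$, $\nabla\widetilde u$ vanish on a set of measure $\ge c|B_s|$, the Sobolev–Poincaré inequality gives
\begin{equation*}
\Big(\fint_{B_s}|\nabla u|^{2}\Big)^{1/2}\le C s \Big(\fint_{B_s}|\nabla^2 u|^{q}\Big)^{1/q},\qquad \frac{1}{q}=\frac12+\frac1d,
\end{equation*}
and similarly $s^{-1}(\fint_{B_s}|u|^2)^{1/2}\le Cs(\fint_{B_s}|\nabla^2u|^q)^{1/q}$ after two applications; in the interior one subtracts the appropriate affine function. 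Plugging into the Caccioppoli inequality yields the weak reverse Hölder inequality
\begin{equation*}
\Big(\fint_{B_{s/2}}|\nabla^2 u|^{2}\Big)^{1/2}\le C\Big(\fint_{B_{s}}|\nabla^2 u|^{q}\Big)^{1/q},\qquad q<2,
\end{equation*}
valid for all balls $B_s$ centered in $D_{r/2}$ with $B_{2s}\subset B_r(Q)$. Here it is convenient to set $g=|\nabla^2 u|$ and $f=0$ on $D_r$, extended by zero outside $D_r$, so the inequality holds on all Euclidean balls meeting $D_{r/2}$ up to harmless scaling.

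Finally I would invoke Gehring's lemma (in the Giusti/Iwaniec local form): a function $g\in L^2$ satisfying such a weak reverse Hölder inequality with exponent $q<2$ on all small balls in fact lies in $L^{p_0}_{\mathrm{loc}}$ for some $p_0>2$ depending only on $d$, $q$ and the constant $C$, with the quantitative bound $(\fint_{B_{s/2}}g^{p_0})^{1/p_0}\le C(\fint_{B_s}g^2)^{1/2}$. Applying this with $s\sim r$ and chasing the scaling gives the claimed estimate with $p_0$ and $C$ depending only on $d$ and $c$. The main technical point — and the only place any care is needed — is the careful choice of cutoff and the handling of the lower-order terms $s^{-4}\int|u|^2$ in the Caccioppoli inequality so that the final reverse Hölder exponent on the right is strictly below $2$; the zero-extension property of weak solutions noted after \eqref{eq.weaksol} and the exterior non-degeneracy condition are precisely what make the Sobolev–Poincaré step work uniformly at boundary balls. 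Since a reference (\cite{B16}) already supplies the Caccioppoli inequality, the proof amounts to the Sobolev–Poincaré step plus a citation of Gehring's lemma.
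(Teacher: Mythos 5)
Your proposal is correct and follows essentially the same route as the paper: zero-extension, boundary Caccioppoli inequality, a Sobolev--Poincar\'{e} step exploiting that the extension vanishes on a set of proportional measure (the paper implements this via a Riesz potential representation plus the Hardy--Littlewood--Sobolev inequality, which is the same estimate), yielding a weak reverse H\"{o}lder inequality with exponent $q<2$, and then Gehring's lemma. Your explicit remark about subtracting an affine function on interior balls is a point the paper glosses over with ``the estimate actually holds for any $Q\in B_r$,'' so no substantive difference remains.
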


\begin{proof}
	A more general version may be found in \cite[Theorem 24]{B16}. We give an outline of the proof for the readers' convenience. First of all, extend $u$ to a function in $W^{2,2}(B_r)$ by zero-extension, which will still denoted by $u$. Let $Q\in \Delta_{r/2}$ and $\rho \in (0,cr)$. By the Caccioppoli inequality, one has
	\begin{equation}\label{est.reverse.BQ}
	\begin{aligned}
	\bigg( \fint_{B_\rho(Q)} |\nabla^2 u|^2 \bigg)^{1/2} & \le \frac{C}{\rho^4} \bigg( \fint_{B_{4\rho}(Q)} |u|^2 \bigg)^{1/2} \\
	& \le \bigg( \frac{C\rho^2 |B_{4\rho}(Q)|}{|B_{4\rho}(Q) \setminus D_r|} \bigg)^2 \bigg( \fint_{B_{4\rho}(Q)} |\nabla^2 u|^q \bigg)^{1/q}\\
	& \le C \bigg( \fint_{B_{4\rho}(Q)} |\nabla^2 u|^q \bigg)^{1/q},
	\end{aligned}
	\end{equation}
	where
	\begin{equation*}
	\frac{1}{q} =\min \Big\{ \frac{1}{2} + \frac{2}{d},1 \Big\},
	\end{equation*}
	and in the second ienquality we also used (twice) a Riesz potential representation \cite[Lemma 7.16]{GT01}
	\begin{equation*}
	|u(x)| \le \frac{C\rho^2 |B_{4\rho}|}{|B_{4\rho} \setminus D_r|} \int_{B_{4\rho}} \frac{|\nabla u(x)|}{|x-y|^{d-1}}dy \qquad \text{for } x\in B_{4\rho},
	\end{equation*}
	and the well-known Hardy-Littlewood-Sobolev inequality. In view of the zero-extension, the estimate actually holds for any $Q\in B_r$. Hence, by a generalized Gehring's inequality (i.e., the self-improving property of reverse H\"{o}lder inequality; see \cite{G73} or \cite[Chapter 12, Theorem 4.1]{CW98}), there exists some $p_0>2$ depending only on $d$ and $c$ so that
	\begin{equation*}
	\bigg( \fint_{B_{r/2}} |\nabla^2 u|^{p_0} \bigg)^{1/{p_0}} \le C\bigg(\fint_{B_r} |\nabla^2 u|^2 \bigg)^{1/2}.
	\end{equation*}
	Finally, replacing $B_r$ by $D_r$ leads to the desired estimate.
\end{proof}

\section{Reverse H\"{o}lder inequality}
In this section, we are interested in the boundary reverse H\"{o}lder inequality for the Hessian
\begin{equation*}
\bigg( \fint_{D_{r/2}} |\nabla^2 u|^p \bigg)^{1/p} \le C\bigg( \fint_{D_r} |\nabla^2 u|^2 \bigg)^{1/2}.
\end{equation*}
Note that Theorem \ref{thm.Meyer} gives such estimate for some $p_0>2$ close to $2$. We will improve this estimate to large $p$ (particularly for $p>d$), provided the domain is $(\delta,\sigma,R)$-quasiconvex with small $\delta>0$. To apply a boundary perturbation approach, we need the following a priori estimate proved by Mayboroda and Maz'ya.

\begin{theorem}[\cite{MM08}]\label{thm.Hessian.Bound}
	Let $\Omega$ be a convex domain in $\R^d$. Fix some $r\in (0,\text{\em diam}(\Omega))$ and $Q\in \partial\Omega$. Suppose $u$ is a weak solution of (\ref{eq.weaksol}) in $D_r = D_{r}(Q)$. Then
	\begin{equation*}
	|\nabla^2 u(x)| \le \frac{C}{r^2 } \bigg( \fint_{D_r} |u|^2 \bigg)^{1/2}, \qquad \text{for any } x\in D_{r/2},
	\end{equation*}
	where $C$ depends only on the dimension $d$ only.
\end{theorem}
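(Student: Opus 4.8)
The plan is to peel off a pointwise-to-Morrey reduction and then prove a uniform boundary Morrey estimate for $\nabla^2u$ in which convexity is the decisive ingredient.

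\emph{Reductions.} Translating $Q$ to the origin and rescaling, it suffices to take $r=1$ and to prove $\sup_{x\in D_{1/2}}|\nabla^2u(x)|\le C\big(\fint_{D_1}|u|^2\big)^{1/2}$. Because the Cauchy data of $u$ vanish on $\Delta_1$, the zero-extension of $u$ lies in $W^{2,2}(B_1(Q))$, so the Caccioppoli inequality (Theorem~\ref{lem.Caccioppoli}) and the Poincar\'e inequality are available on all truncated balls $D_\rho(P)$, $P\in\Delta_1$; moreover on $\Delta_1$ the vanishing of $\nabla u$ forces the tangential--tangential and tangential--normal parts of $\nabla^2u$ to vanish, so $\nabla^2u=(\partial_n^2u)\,n\otimes n$ and $|\nabla^2u|=|\Delta u|$ there. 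Now split $D_{1/2}$ according to $\text{dist}(x,\partial\Omega)$. If $\text{dist}(x,\partial\Omega)\ge\tfrac18$ then $B_{1/8}(x)\subset D_1$ and the interior regularity for $\Delta^2u=0$ (iterated Caccioppoli on concentric balls followed by Sobolev embedding, together with $|D_1|\le|B_1|$) gives the desired bound directly. Every remaining point lies within distance $\tfrac18$ of some $P\in\Delta_{3/4}$, and for those it suffices to establish the uniform boundary Morrey estimate
\[
\rho^{-d}\int_{D_\rho(P)}|\nabla^2u|^2\ \le\ C\,\fint_{D_1}|u|^2,\qquad P\in\Delta_{3/4},\ \ 0<\rho\le\rho_0 ;
\]
indeed, by Lebesgue differentiation this yields $|\nabla^2u|\le C\big(\fint_{D_1}|u|^2\big)^{1/2}$ a.e.\ in a one-sided neighbourhood of $\partial\Omega$, and the bound propagates to every point of $D_{1/2}$ by continuity of $\nabla^2u$ in $\Omega$.

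\emph{Where convexity enters.} The Rellich--Pohozaev identity for $\Delta^2u=0$ on $D_\rho(P)$ with the radial multiplier $(x-P)\cdot\nabla u$ has, on the flat piece $\Delta_\rho(P)$, an integrand that collapses (using $u=|\nabla u|=0$ and $|\nabla^2u|=|\Delta u|$ there) to $-\tfrac12\big((x-P)\cdot n\big)|\Delta u|^2$; since $\Omega$ is convex and $P\in\partial\Omega$, one has $(x-P)\cdot n(x)\ge 0$ for a.e.\ $x\in\partial\Omega$, so this term has a favourable sign. This is the structural gain from convexity. By itself, however, it only produces the scale-invariant control $\rho^{\,d-4}\int_{D_\rho(P)}|\nabla^2u|^2\lesssim(\text{energy})$ — in fact a lower growth bound — and not the extra $\rho^{4}$ of decay needed for the displayed Morrey estimate, which encodes that $u$ vanishes cleanly to second order at the boundary.

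\emph{Getting the extra decay, and the main obstacle.} To capture that decay I would pass to the tangent cone. Arguing by contradiction, if the Morrey estimate failed along scales $\rho_j\downarrow 0$ at points $P_j\to P_\infty$, a suitable rescaling and normalization of $u$ would converge to a nonzero biharmonic function $u_\infty$ on the tangent cone $\mathcal C$ of $\Omega$ at $P_\infty$ — a \emph{convex} cone, since $\Omega$ is convex — with zero Cauchy data on $\partial\mathcal C$ and with $\int_{B_R}|\nabla^2u_\infty|^2$ growing strictly slower than the scale-invariant rate; a Liouville theorem on convex cones then forces $u_\infty$ to be a polynomial of degree at most two with constant Hessian, contradicting the normalization. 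Equivalently, and in the weighted-estimate spirit, one proves directly that on a convex cone every biharmonic function with vanishing Cauchy data has minimal homogeneity strictly above $2$, by re-running the Rellich computation against a tuned weight $|x-P|^{-\alpha}\,(x-P)\cdot\nabla u$ with $\alpha$ chosen so that the bulk terms and the sign-definite boundary term close the estimate, after which a covering and perturbation argument transfers the cone decay to a general convex $\Omega$. The hard part is exactly this Liouville/decay statement on convex cones: it is sharp and genuinely uses convexity — on the complement of a narrow cone the homogeneous biharmonic functions with vanishing Cauchy data can have homogeneity below $2$, which is precisely the Pipher--Verchota obstruction and the reason the weak maximum principle fails in general Lipschitz domains in high dimensions. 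Once this input is in place, the reductions above, the interior estimates, the passage from the Morrey bound to the pointwise bound, and the cone-to-domain perturbation are all routine; this is the content carried out in \cite{MM08}.
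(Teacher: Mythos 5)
This theorem is not proved in the paper at all: it is imported verbatim from Mayboroda--Maz'ya \cite{MM08} as a black box (the a priori input for the perturbation argument of Theorem \ref{thm.Reverse Holder}), so there is no in-paper proof to compare your sketch against. Judged on its own terms, your reductions are sound: the rescaling to $r=1$, the zero-extension, the observation that vanishing Cauchy data force $\nabla^2u=(\partial_n^2u)\,n\otimes n$ and $|\nabla^2u|=|\Delta u|$ on $\Delta_1$, the splitting into interior and near-boundary points, and the passage from the boundary Morrey estimate $\rho^{-d}\int_{D_\rho(P)}|\nabla^2u|^2\le C\fint_{D_1}|u|^2$ to the pointwise bound via interior estimates are all correct and routine. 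You have also correctly located where convexity enters (the sign of $(x-P)\cdot n$ in the Rellich--Pohozaev boundary term, equivalently the nonnegativity of the second fundamental form).

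The genuine gap is that the entire analytic content of the theorem sits in the step you describe but do not prove: the dilation-invariant decay (equivalently, the Liouville/spectral statement on convex cones) that upgrades the scale-invariant Rellich information to the Morrey exponent $d$. Without that, nothing above the trivial energy bound has been established, and this is precisely the part that is sharp and that fails on the complement of a thin cone. Two smaller points: your "equivalent" weighted formulation asserts that on a convex cone every biharmonic function with vanishing Cauchy data has homogeneity \emph{strictly} above $2$; this is false for the half-space, where $u=x_d^2$ is biharmonic with zero Cauchy data and homogeneity exactly $2$ (consistent with a bounded, non-decaying Hessian, and with your first, compactness formulation, which correctly allows degree-two polynomials). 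Also, "Lebesgue differentiation" is not quite the right mechanism to pass from the boundary Morrey bound to the pointwise bound; one should instead combine it with the interior estimate $\sup_{B_{d(x)/2}(x)}|\nabla^2u|\le C\big(\fint_{B_{d(x)}(x)}|\nabla^2u|^2\big)^{1/2}$. Since you explicitly attribute the missing core step to \cite{MM08}, your text is best read as an annotated citation, which is exactly how the paper itself treats the statement, rather than as a proof.
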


Our boundary perturbation approach is based on Shen's real variable argument. The following is a  simplified version of \cite[Theorem 4.2.3]{S18}.

\begin{theorem}[Shen's real variable argument]\label{thm.RealVar}
	Let $B_0$ be a ball in $\R^d$ and $F\in L^2(4B_0)$. Let $q>2$. Suppose that for each ball $B \subset 2B_0$ with $|B|\le c_0|B_0|$, there exist two measurable functions $F_B$ and $R_B$ on $2B$ such that $|F|\le |F_B| + |R_B|$ on $2B$, and
	\begin{equation}\label{est.RealCond}
	\begin{aligned}
	\bigg( \fint_{2B} |R_B|^q \bigg)^{1/q} & \le C_1 \bigg( \fint_{4B} |F|^2 \bigg)^{1/2},\\
	\bigg( \fint_{2B} |F_B|^2 \bigg)^{1/2} &\le \eta \bigg( \fint_{4B} |F|^2 \bigg)^{1/2},
	\end{aligned}
	\end{equation}
	where $C_1 >1$ and $0<c_0<1$. Then for any $2<p<q$ there exists $\eta_0>0$, depending only on $C_1,c_0,p,q$, with the property that if $0\le \eta \le \eta_0$, then $F\in L^p(B_0)$ and
	\begin{equation*}
	\bigg( \fint_{B_0} |F|^p \bigg)^{1/p} \le C\bigg( \fint_{4B_0} |F|^2 \bigg)^{1/2},
	\end{equation*}
	where $C$ depends at most on $C_1,c_0,p$ and $q$.
\end{theorem}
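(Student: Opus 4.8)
The plan is to derive the stated $L^p$ reverse-H\"{o}lder bound from a distributional inequality for a Hardy--Littlewood maximal function, which in turn is obtained from a Calder\'{o}n--Zygmund stopping-time decomposition in which the structural hypothesis (\ref{est.RealCond}) is applied on each stopping ball. Normalizing so that $\bigl(\fint_{4B_0}|F|^2\bigr)^{1/2}=1$ and writing $\mathcal M h(x)=\sup_{B\ni x}\fint_{B}|h|$ for the uncentered maximal operator, the Lebesgue differentiation theorem gives $|F(x)|^2\le\mathcal M\bigl(|F|^2\chi_{4B_0}\bigr)(x)$ for a.e.\ $x\in B_0$, so it is enough to bound $\int_{B_0}\bigl(\mathcal M(|F|^2\chi_{4B_0})\bigr)^{p/2}$; equivalently, with $\omega(\lambda):=\bigl|\{x\in B_0:\mathcal M(|F|^2\chi_{4B_0})(x)>\lambda\}\bigr|$, to prove $\int_0^\infty\lambda^{p/2-1}\omega(\lambda)\,d\lambda\le C|B_0|$. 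The crux is the decay inequality $\omega(A\lambda)\le\gamma\,\omega(\lambda)$ for all $\lambda>\lambda_1$, with $A$ large, $\lambda_1=\lambda_1(d,c_0)$, and $\gamma$ that can be forced below $A^{-p/2}$.

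To prove this decay, fix $\lambda>\lambda_1$ and run a Calder\'{o}n--Zygmund decomposition of $E(\lambda):=\{x\in B_0:\mathcal M(|F|^2\chi_{4B_0})(x)>\lambda\}$ relative to a dyadic mesh adapted to $B_0$; choosing $\lambda_1$ large enough that $\omega(\lambda_1)\le C_d4^d|B_0|/\lambda_1$ is a sufficiently small fraction of $|B_0|$ makes this possible and forces every stopping cube to be small. One gets disjoint dyadic cubes $\{Q_k\}$ with $|Q_k|\le 2|E(\lambda)\cap Q_k|$, $\bigl|E(\lambda)\setminus\bigcup_kQ_k\bigr|=0$, and dyadic parents $\widetilde Q_k$ with $|E(\lambda)\cap\widetilde Q_k|\le\tfrac12|\widetilde Q_k|$. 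Let $B_k\supseteq Q_k$ be the concentric ball of comparable radius; by smallness it is admissible in (\ref{est.RealCond}) and $4B_k\subset 4B_0$. Pick $x_0\in\widetilde Q_k$ with $\mathcal M(|F|^2\chi_{4B_0})(x_0)\le\lambda$; since $x_0$ is within a dimensional multiple of $\mathrm{rad}(B_k)$ of every point of $Q_k$, this one inequality yields both $\fint_{4B_k}|F|^2\le\kappa\lambda$ and $\mathcal M\bigl(|F|^2\chi_{4B_0\setminus2B_k}\bigr)(x)\le\kappa'\lambda$ for all $x\in Q_k$, with $\kappa,\kappa'$ dimensional. Hence for $A>2\kappa'$ we get the localization $\{x\in Q_k:\mathcal M(|F|^2\chi_{4B_0})(x)>A\lambda\}\subseteq\{x\in Q_k:\mathcal M(|F|^2\chi_{2B_k})(x)>A\lambda/2\}$. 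Splitting $F=F_{B_k}+R_{B_k}$ on $2B_k$ as in (\ref{est.RealCond}) and using $|F|^2\le2|F_{B_k}|^2+2|R_{B_k}|^2$, the weak-$(1,1)$ bound for $\mathcal M$ applied to $|F_{B_k}|^2$ (with $\fint_{2B_k}|F_{B_k}|^2\le\eta^2\kappa\lambda$) and the strong-$(q/2,q/2)$ bound applied to $|R_{B_k}|^2\in L^{q/2}$ (with $\fint_{2B_k}|R_{B_k}|^q\le C_1^q(\kappa\lambda)^{q/2}$) give
\begin{equation*}
\bigl|\{x\in Q_k:\mathcal M(|F|^2\chi_{4B_0})(x)>A\lambda\}\bigr|\le\Bigl(\frac{C_a\eta^2}{A}+\frac{C_b}{A^{q/2}}\Bigr)|Q_k|,
\end{equation*}
with $C_a=C_a(d)$ and $C_b=C_b(d,C_1,q)$. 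Summing over $k$ and using $|Q_k|\le2|E(\lambda)\cap Q_k|$ yields $\omega(A\lambda)\le\gamma\,\omega(\lambda)$ with $\gamma=2C_a\eta^2A^{-1}+2C_bA^{-q/2}$.

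It remains to choose the parameters and integrate, and this is the point where $p<q$ is decisive: since $(p-q)/2<0$, first pick $A=A(C_b,p,q)$ with $2C_bA^{(p-q)/2}\le\tfrac14$, and then pick $\eta_0=\eta_0(C_a,A,p)$ with $2C_a\eta_0^2A^{p/2-1}\le\tfrac14$; for $0\le\eta\le\eta_0$ this gives $\gamma A^{p/2}\le\tfrac12<1$, and unwinding the dependences shows $A,\lambda_1$, hence $\eta_0$, depend only on $d,C_1,c_0,p,q$. Iterating $\omega(A^m\lambda_1)\le\gamma^m\omega(\lambda_1)\le\gamma^m|B_0|$ and splitting $\int_{\lambda_1}^\infty\lambda^{p/2-1}\omega(\lambda)\,d\lambda$ over the geometric intervals $[A^m\lambda_1,A^{m+1}\lambda_1]$ produces a convergent geometric series of ratio $A^{p/2}\gamma<1$; together with $\int_0^{\lambda_1}\lambda^{p/2-1}\omega(\lambda)\,d\lambda\le\lambda_1^{p/2}|B_0|$ this gives $\int_0^\infty\lambda^{p/2-1}\omega(\lambda)\,d\lambda\le C|B_0|$, hence $\fint_{B_0}|F|^p\le\fint_{B_0}\bigl(\mathcal M(|F|^2\chi_{4B_0})\bigr)^{p/2}\le C$, which is the assertion after undoing the normalization.

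The step I expect to be the main obstacle is the geometric bookkeeping in the Calder\'{o}n--Zygmund decomposition---verifying that the enlarged stopping balls $4B_k$ are admissible for (\ref{est.RealCond}) and contained in $4B_0$, and that the localization of $\mathcal M$ around the point $x_0$ is quantitatively correct---since this is exactly what fixes the threshold $\lambda_1$ (forcing one to restrict to large $\lambda$ so that $E(\lambda)$, hence every $Q_k$, is small) and its dependence on $c_0$; the remaining delicacy is the order of choosing $A$ before $\eta_0$, which is what makes $\eta_0$ depend only on $C_1,c_0,p,q$.
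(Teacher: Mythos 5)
Your proof is correct and is essentially the argument the paper delegates to its reference [S18, Theorem 4.2.3]: the Calder\'{o}n--Zygmund stopping-time decomposition of the superlevel sets of $\mathcal{M}(|F|^2\chi_{4B_0})$, the weak-$(1,1)$/strong-$(q/2,q/2)$ splitting on each stopping ball via $|F|\le |F_B|+|R_B|$, and the order of choosing $A$ before $\eta_0$ are exactly the standard route. The only cosmetic discrepancy is that your constants (hence $\eta_0$ and $C$) also depend on the dimension $d$, which the statement in the paper suppresses but which is equally present in Shen's original theorem.
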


We point out that the balls $4B$ and $2B$ in (\ref{est.RealCond}) are not essential and may be replaced by $\alpha B$ for any fixed $\alpha>0$.

We also need the following lemmas regarding geometric properties of the quasiconvex domains.

\begin{lemma}\label{lem.LipConst}
	Let $V$ be a convex domain contained in $B_1 = B_1(0)$ and $|V| \ge \sigma |B_1|$. Then $V$ is a Lipschitz domain with a constant depending only on $d$ and $\sigma$.
\end{lemma}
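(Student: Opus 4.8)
The plan is to show that a convex domain $V \subset B_1(0)$ with $|V| \ge \sigma|B_1|$ contains a ball $B_\rho(x_0)$ with $\rho = \rho(d,\sigma) > 0$, and then to exploit the fact that a convex body sandwiched between two concentric balls of comparable radii is the epigraph of a Lipschitz function near each boundary point, with Lipschitz constant controlled by the ratio of the radii. First I would establish the interior ball: since $V$ is convex with $|V| \ge \sigma|B_1|$, a standard fact (e.g.\ via John's ellipsoid, or a direct averaging/Steiner-symmetrization argument) gives that $V$ contains a ball of radius $\rho_0 = c(d)\sigma$ centered at some point $x_0$; indeed if every ball of radius $\rho$ met the complement, one could cover $V$ by boundedly many such deficient regions and contradict the volume lower bound. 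After translating so that $x_0 = 0$, we have $B_{\rho_0}(0) \subset V \subset B_1(0)$.

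Next, for any boundary point $Q \in \partial V$, I would use convexity to produce a supporting hyperplane $\Pi$ at $Q$ with $V$ lying on one side. The key geometric claim is that, in a coordinate system with $e_d$ the inner normal direction at $Q$, the boundary $\partial V$ near $Q$ is the graph of a convex (hence Lipschitz on compact subsets) function $\varphi$ of the first $d-1$ coordinates, and its Lipschitz constant is bounded by $\sqrt{1 - \rho_0^2}/\rho_0$ or a similar explicit quantity depending only on $\rho_0$ (equivalently on $d$ and $\sigma$). The mechanism: any point of $V$ together with the ball $B_{\rho_0}(0)$ spans a cone inside $V$ (by convexity), so through $Q$ there is a cone of fixed opening, contained in the complement of $V$ on one side and such that $V$ contains a cone of fixed opening on the other side; this two-sided cone condition at every boundary point with uniform aperture is exactly the Lipschitz graph condition with a uniform constant. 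Covering $\partial V$ by finitely many such coordinate charts (the number and size controlled by $\rho_0$) completes the argument.

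The main obstacle I expect is making the passage from the interior-ball/cone condition to an honest \emph{Lipschitz graph with a dimension-free-modulus constant} fully rigorous and uniform — in particular, checking that the supporting hyperplane can always be complemented by a genuine coordinate chart on which $\partial V$ is a graph over a ball of radius bounded below (not just locally a graph). This is where one must quantify how far along the normal direction the graph representation persists; the bound $B_{\rho_0}(0) \subset V \subset B_1(0)$ is what rescues this, since it forces $\partial V$ to stay trapped in the annulus and prevents the graph from becoming vertical within a fixed horizontal radius. The interior-ball lemma itself is classical but I would either cite it or include the short averaging proof; everything else is elementary convex geometry, so no delicate estimates are needed beyond bookkeeping the constants' dependence on $d$ and $\sigma$.
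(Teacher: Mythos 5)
Your proposal is correct and follows essentially the same route as the paper: first produce an interior ball $B_{r_0}(x_0)\subset V$ with $r_0$ depending only on $d$ and $\sigma$, then use convexity to place a cone of uniform aperture (determined by $r_0$) with vertex at each boundary point inside $V$, which is the uniform two-sided cone condition equivalent to the Lipschitz graph property. The only difference is in the interior-ball step, where the paper avoids John's ellipsoid and instead uses the monotonicity $\mathbb{H}^{d-1}(\partial V)\le \mathbb{H}^{d-1}(\partial B_1)$ together with the bound $|V_t|\le t\,\mathbb{H}^{d-1}(\partial V)$ on the boundary layer $V_t$; your parenthetical covering sketch is not a valid substitute on its own, but the John-ellipsoid alternative you cite does work.
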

\begin{proof}
	First of all, $V$ is a Lipschitz domain since it is convex. Since $V$ is contained in $B_1$, we have
	\begin{equation*}
	\mathbb{H}^{d-1}(\partial V) \le \mathbb{H}^{d-1}(\partial B_1).
	\end{equation*}
	This can be shown by approximating $\partial V$ by convex polyhedrons and taking the limit. Now, consider the set $V_t = \{ x\in V: \text{dist}(x,\partial V) < t \}$. Again, by an approximation argument, we see
	\begin{equation*}
	|V_t| \le t \mathbb{H}^{d-1}(\partial V) \le t \mathbb{H}^{d-1}(\partial B_1).
	\end{equation*}
	If
	\begin{equation*}
	t < r_0: = \frac{\sigma |B_1|}{\mathbb{H}^{d-1}(\partial B_1)},
	\end{equation*}
	then $|V_t| < |V|$ and thus, $V\setminus V_t \neq \emptyset.$ This implies that there exists some point $x_0 \in V$ so that $B_{r_0}(x_0) \subset V$.
	
	Now, by the convexity of $V$, for each $Q \in \partial V$, the cone connecting $Q$ and $B_{r_0}(x_0)$,
	\begin{equation*}
	\mathcal{C}_Q := \{ (1-t)Q+t x: x\in B_{r_0}(x_0), 0<t<1 \}
	\end{equation*}
	is contained in $V$, since $Q$ is on the boundary and $B_{r_0}(x_0)$ is contained strictly in $V$. Note that all these cones $\{ \mathcal{C}_Q: Q\in \partial V\}$ have uniform lower bounds of height and aperture comparable to $r_0$. This implies that the Lipschitz constant of $\partial V$ comparable to $r_0$. Actually, if we consider the localized boundary $\partial V \cap B_{r_0/4}(P)$ for some $P\in \partial V$, there exists a fixed cone $\mathcal{C}_0 = C_0(P)$ whose vertex is the origin, axis is parallel to $\overrightarrow{Px_0}$ and aperture is smaller but still comparable to $r_0$, so that $Q+ \mathcal{C}_0 \subset V$ for all $Q\in \partial V \cap B_{r_0/4}(P)$. This shows that the Lipschitz constant of $\partial V \cap B_{r_0/4}(P)$ is comparable to $r_0$.
\end{proof}

\begin{lemma}\label{lem.flatness}
	Let $\Omega$ be a $(\delta,\sigma,R)$-quaiconvex domain. Let $Q\in \partial \Omega, r\in (0,R/4), B_r = B_r(Q)$ and $V_{4r}$ be the convex hull of $\Omega\cap B_{4r}(Q)$. Then for $0<t<1$,
	\begin{equation}\label{est.flat}
	\begin{aligned}
	&\{ x\in \Omega: \text{\em dist}(x,\partial\Omega) < tr \} \cap B_r \\
	& \qquad \subset W_{r,t}:= \{ x\in V_{4r}: \text{\rm dist}(x,\partial V_{4r} \cap B_{3r}) \le (t+\delta)r \}.
	\end{aligned}
	\end{equation}
	Moreover, $|W_{r,t}| \le C(t+\delta) r^d$, where $C$ depends only on $d$ and $\sigma$.
\end{lemma}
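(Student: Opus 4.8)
The plan is to verify the two claimed facts separately, starting with the set inclusion \eqref{est.flat} and then the measure bound. For the inclusion, fix $x\in \Omega$ with $\mathrm{dist}(x,\partial\Omega)<tr$ and $x\in B_r(Q)$. First I would note that, by the quasiconvexity condition (ii) applied at scale $4r$ (which is admissible since $4r<R$), we have $\Omega\cap B_{4r}(Q)\subset V_{4r}$, so in particular $x\in V_{4r}$; the content is therefore to show $\mathrm{dist}(x,\partial V_{4r}\cap B_{3r})\le (t+\delta)r$. Pick a point $y\in\partial\Omega$ with $|x-y|<tr$. Since $x\in B_r(Q)$, we have $y\in B_{2r}(Q)$, and hence $y\in\partial(\Omega\cap B_{4r}(Q))$ (the portion of $\partial\Omega$ inside $B_{4r}$, staying away from $\partial B_{4r}$). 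By the Hausdorff-distance bound $d_H(\partial(\Omega\cap B_{4r}),\partial V_{4r})\le \delta\cdot 4r$ — here I would want to use that the relevant $\delta$ can be absorbed; more carefully, one should run condition (ii) at scale $r$ rather than $4r$, or observe that the statement as written has $\delta r$ with the convex hull at scale $4r$, so I will use the Hausdorff bound $d_H(\partial(\Omega\cap B_r),\partial V_r)\le\delta r$ together with $V_r\subset V_{4r}$ — there is a point $z\in\partial V_{4r}$ with $|y-z|\le \delta r$. Then $|x-z|\le|x-y|+|y-z|<tr+\delta r=(t+\delta)r$, and since $|x-Q|<r$ and $(t+\delta)r<2r$ we get $z\in B_{3r}(Q)$, so $z\in\partial V_{4r}\cap B_{3r}$, giving $\mathrm{dist}(x,\partial V_{4r}\cap B_{3r})\le(t+\delta)r$ as required.

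For the measure estimate $|W_{r,t}|\le C(t+\delta)r^d$, the strategy is to reduce to the elementary fact that a tubular neighborhood of the boundary of a convex set has measure controlled by the perimeter times the tube width. By rescaling (replacing $V_{4r}$ by $\frac{1}{4r}(V_{4r}-Q)$), I may assume $r=1$ and work with a convex domain $V\subset B_4(0)$; by the non-degeneracy condition \eqref{est.reg.cond} and the inclusion $\Omega\cap B_4\subset V$ we have $|V|\ge|\Omega\cap B_1|\ge\sigma|B_1|$, so Lemma \ref{lem.LipConst} (applied after a further harmless rescaling to fit inside a unit ball) tells us $V$ is Lipschitz with constant depending only on $d$ and $\sigma$; in particular $\mathbb{H}^{d-1}(\partial V)\le \mathbb{H}^{d-1}(\partial B_4)$ is bounded by a dimensional constant. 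Then, exactly as in the proof of Lemma \ref{lem.LipConst}, an approximation of $\partial V$ by convex polyhedra gives $|\{x\in V:\mathrm{dist}(x,\partial V)<s\}|\le s\,\mathbb{H}^{d-1}(\partial V)$, and since $W_{1,t}\subset\{x\in V:\mathrm{dist}(x,\partial V)\le t+\delta\}$ we obtain $|W_{1,t}|\le (t+\delta)\mathbb{H}^{d-1}(\partial B_4)\le C(t+\delta)$; undoing the scaling yields $|W_{r,t}|\le C(t+\delta)r^d$ with $C=C(d,\sigma)$.

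The routine parts are the triangle-inequality chase and the rescaling bookkeeping. The one point that requires genuine care — and which I expect to be the main subtlety — is the interplay between the scale at which the convex hull $V_{4r}$ is taken and the scale at which the Hausdorff-distance bound $\delta r$ is invoked: one must check that $V_r\subset V_{4r}$ and that a point of $\partial\Omega$ lying in $B_{2r}(Q)$ genuinely belongs to the "localized boundary" $\partial(\Omega\cap B_r(Q))$ appearing in Definition \ref{def.quasiconvex}(ii), i.e., that it is not spuriously introduced by the truncation at $\partial B_r(Q)$; this is where the hypothesis $r<R/4$ and the factor-of-four slack in the balls $B_r\subset B_{2r}\subset B_{3r}\subset B_{4r}$ are used. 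The other mild technical point is the polyhedral-approximation argument for the tube estimate, but that is already carried out in the proof of Lemma \ref{lem.LipConst} and can simply be cited.
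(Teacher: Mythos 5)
Your proposal follows essentially the same route as the paper for both parts: the triangle-inequality chase $w\to y\to z$ for the inclusion, and a rescaling plus tube-volume estimate for a convex set (via Lemma \ref{lem.LipConst}) for the measure bound. The one place you deviate is in handling the scale mismatch between the convex hull taken at radius $4r$ and the Hausdorff bound $\delta r$ claimed in the lemma, and your proposed repair does not work. Applying Definition \ref{def.quasiconvex}(ii) at radius $r$ centered at $Q$ is not available for the point $y$: you only know $y\in B_{2r}(Q)$, so $y$ need not belong to $\partial(\Omega\cap B_r(Q))$ at all. Moreover, even if you produce $z\in\partial V_r$ with $|y-z|\le\delta r$, the inclusion $V_r\subset V_{4r}$ does not give $z\in\partial V_{4r}$ --- a boundary point of the smaller convex hull can sit well in the interior of the larger one, so you cannot conclude $\mathrm{dist}(x,\partial V_{4r}\cap B_{3r})\le(t+\delta)r$ this way. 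The honest resolution is simply to apply condition (ii) at scale $4r$ (admissible since $r<R/4$), which yields $z\in\partial V_{4r}$ with $|y-z|\le 4\delta r$ and hence the inclusion with $(t+4\delta)r$ in place of $(t+\delta)r$; the paper itself elides this factor of $4$, and it is immaterial, since every downstream use of the lemma is insensitive to replacing $\delta$ by $4\delta$. Apart from this, your argument is fine; your measure estimate, which bounds $W_{r,t}$ by the full inner tube $\{x\in V_{4r}:\mathrm{dist}(x,\partial V_{4r})\le(t+\delta)r\}$ and invokes the polyhedral-approximation bound $|V_s|\le s\,\mathbb{H}^{d-1}(\partial V)$ already established inside the proof of Lemma \ref{lem.LipConst}, is if anything slightly more direct than the paper's appeal to the Lipschitz constant of $r^{-1}V_{4r}$.
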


\begin{proof}
	Let $w\in \{ x\in \Omega: \text{dist}(x,\partial\Omega) < tr \} \cap B_r$. Then, there exists a point $y\in \partial\Omega$ so that $|w-y|< tr$. Thus, $|y-Q| \le |w-Q| + |w-y| < r+tr < 2r$. Hence $y\in B_{2r}(Q) \cap \partial\Omega$. Now, by the definition of $V_{4r}$ and the $(\delta,\sigma,R)$-quasiconvexity, there exists a point $z\in \partial V_{4r}$ so that $|y-z| \le \delta r$. Since $\delta \in (0,1)$, $|z-Q| \le |y-Q| + |z-y| < 3r$ and $|w-z| \le |w-y| + |y-z| < tr + \delta r = (t+\delta) r$. This implies $z\in \partial V_{4r} \cap B_{3r}(Q)$ and therefore
	\begin{equation*}
	w \in \{ x\in V_{4r}: \text{dist}(\partial V_{4r} \cap B_{3r} ) \le (t+\delta )r \}.
	\end{equation*}
	This gives (\ref{est.flat}) since $x$ is an arbitrary point in $\{ x\in \Omega: \text{dist}(x,\partial\Omega) < tr \} \cap B_r$.
	
	To estimate $|W_{r,t}|$, note that $V_{4r} \subset B_{4r}$ is a convex set and $|V_{4r}| \ge \sigma |B_{4r}|$, by the quasiconvexity. By rescaling and Lemma \ref{lem.LipConst}, the Lipschitz constant of $r^{-1} V_{4r}$ depends only on $d$ and $\sigma$. This actually implies $|W_{r,t}| \le C(t+\delta) r \mathbb{H}^{d-1}(\partial V_{4r}) \le C(t+\delta) r^d$.
\end{proof}

%\begin{proof}
%	First of all, since $\Omega$ is $(\delta,\sigma,R)$-quaiconvex, there exists a convex domain $V$ (here we may assume $V$ is the convex hull of $\Omega\cap B_r(Q)$, i.e., the smallest convex set containing $\Omega\cap B_r(Q)$) so that $\Omega\cap B_r(Q) \subset V$ and
%	\begin{equation}\label{est.quasiconvex}
%	d_H(B_r(Q)\cap \Omega, \partial V) \le \delta r.
%	\end{equation}
%	We claim that
%	\begin{equation}\label{eq.flatness}
%	\begin{aligned}
%	&\{ x\in \Omega: \text{dist}(x,\partial\Omega) < tr \} \cap B_r(Q) \\
%	& \qquad \subset \{ x\in V: \text{dist}(x,\partial V) \le (t+\delta)r \}.
%	\end{aligned}
%	\end{equation}
%	Actually, for each $x$ contained in the left-hand side of (\ref{eq.flatness}), there exist points $y\in \partial\Omega$ and $z\in \partial V$ so that $|x-y| < tr$ and $|y-z| \le \delta r$ (due to (\ref{est.quasiconvex})). It follows that $|x-z| < (t+\delta)r$, which implies (\ref{eq.flatness}).
%	
%	Now, using the fact that $V$ is convex (hence Lipschitz) and $V\subset B_r(Q)$, we have $\mathbb{H}^{d-1}(\partial V) \le \mathbb{H}^{d-1}(\partial B_r(Q)) = C_d r^{d-1}$, where $C_d$ depends only on $d$. It follows from the proof of Lemma \ref{lem.LipConst} that
%	\begin{equation*}
%	\begin{aligned}
%	| \{ x\in V: \text{dist}(x,\partial V) \le (t+\delta)r \}| &\le C(t+\delta) r\mathbb{H}^{d-1}(\partial V) \\
%	&\le C(t+\delta)r^d.
%	\end{aligned}
%	\end{equation*}
%	In view of (\ref{eq.flatness}), this implies (\ref{est.flat}).
%\end{proof}

\begin{lemma}\label{lem.newPoincare}
	Let $\Omega$ be a $(\delta,\sigma,R)$-quaiconvex domain. Let $Q\in \partial \Omega, r\in (0,R/4), B_r = B_r(Q)$ and $V_{4r}$ be the convex hull of $B_{4r} \cap \Omega$. Suppose $u\in W^{1,2}(B_{4r})$ and $u = 0$ on $B_{4r} \setminus V_{4r}$. Then, if $0<t+\delta<1$,
	\begin{equation}\label{est.newPoincare}
	\int_{B_r \cap \Omega_{tr}} u^2 \le C(t+\delta)^2 r^2 \int_{W_{t,r}} |\nabla u|^2,
	\end{equation}
	where $\Omega_{tr} = \{ x\in \Omega: \text{\rm dist}(x,\partial\Omega) < tr \}$, $W_{t,r}$ is given as in (\ref{est.flat}) and $C$ depends only on $d$ and $\sigma$.
\end{lemma}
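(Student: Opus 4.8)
\smallskip
\noindent\textbf{Proof proposal.}

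The plan is to reduce (\ref{est.newPoincare}) to a one–dimensional Hardy/Friedrichs inequality along segments, using crucially that $u$ (extended by zero) vanishes on all of $B_{4r}\setminus V_{4r}$, not merely on $\partial V_{4r}$, together with the convexity of $V_{4r}$. First, by Lemma \ref{lem.flatness} we have $B_r\cap\Omega_{tr}\subset W_{r,t}$, so it suffices to bound $\int_{B_r\cap\Omega_{tr}}u^2$. Fix $x\in B_r\cap\Omega_{tr}$ and let $z=z(x)\in\partial V_{4r}$ be a nearest boundary point, so $\rho_0(x):=|x-z|=\text{dist}(x,\partial V_{4r})\le(t+\delta)r<r$; since $|x-Q|<r$, also $z\in\partial V_{4r}\cap B_{2r}$. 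Because $V_{4r}$ is convex and the open ball $B_{\rho_0(x)}(x)\subset V_{4r}$ has closure touching $\partial V_{4r}$ at $z$, the unit vector $n=(z-x)/\rho_0(x)$ is the outward normal of a supporting hyperplane of $V_{4r}$ at $z$. Consequently the segment $\gamma(s)=x+sn$ lies in $\overline{V_{4r}}$ for $0\le s\le\rho_0(x)$ and in $B_{4r}\setminus\overline{V_{4r}}$ for $\rho_0(x)<s\le2\rho_0(x)$ (note $2\rho_0(x)<2r$), where $u\equiv0$; moreover $\text{dist}(\gamma(s),\partial V_{4r}\cap B_{3r})\le|\gamma(s)-z|=\rho_0(x)-s\le(t+\delta)r$, so $\gamma([0,\rho_0(x)])\subset W_{r,t}$.

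Since $u(\gamma(2\rho_0(x)))=0$ and $\nabla u=0$ along $\gamma((\rho_0(x),2\rho_0(x)])$, integrating along the line and applying the Cauchy--Schwarz inequality yields, for a.e.\ such $x$,
\begin{equation*}
u(x)^2\le\rho_0(x)\int_0^{\rho_0(x)}|\nabla u(x+sn)|^2\,ds\le(t+\delta)r\int_{[x,z(x)]}|\nabla u|^2\,d\mathbb{H}^1.
\end{equation*}
To legitimize the integration along non–axis–parallel lines one first replaces $u$ by a mollification $u_\e$, which vanishes outside the $\e$–neighborhood of $V_{4r}$ (so one pushes $\gamma$ slightly past $s=\rho_0(x)+\e$), runs the estimate for $u_\e$ and lets $\e\to0$; alternatively one covers $\partial V_{4r}$ by Lipschitz graph charts and integrates along vertical segments, where the absolute continuity of $W^{1,2}$ functions on a.e.\ line applies directly.

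Integrating the displayed inequality over $x\in B_r\cap\Omega_{tr}\subset W_{r,t}$ gives
\begin{equation*}
\int_{B_r\cap\Omega_{tr}}u^2\le(t+\delta)r\int_{W_{r,t}}\Big(\int_{[x,z(x)]}|\nabla u|^2\,d\mathbb{H}^1\Big)dx,
\end{equation*}
so the lemma reduces to the Fubini–type estimate $\int_{W_{r,t}}\big(\int_{[x,z(x)]}g\,d\mathbb{H}^1\big)dx\le C(d,\sigma)(t+\delta)r\int_{W_{r,t}}g$ for $g\ge0$. Here I would use the Lipschitz character of $V_{4r}$: by Lemma \ref{lem.LipConst} (after rescaling so $V_{4r}\subset B_1$) $\partial V_{4r}$ is an $L$–Lipschitz graph, $L=L(d,\sigma)$, in a bounded number $N=N(d,\sigma)$ of coordinate charts of size comparable to $r$, and provided $t+\delta$ lies below a threshold depending on $d,\sigma$ each segment $[x,z(x)]$ stays inside one chart. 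Within a chart the map $x\mapsto(z(x),\rho_0(x))$ is a change of variables onto a collar over $\partial V_{4r}\cap B_{3r}$ of thickness $\le(t+\delta)r$, with inverse $x=z+\tau\nu(z)$ ($\nu$ the inner normal, defined $\mathbb{H}^{d-1}$–a.e.) and Jacobian $J(z,\tau)$ satisfying $c(d,\sigma)\le J\le 1$; expressing the double integral in these coordinates, using $J\le1$, interchanging the order of the two one–dimensional integrations in $s$ and $\tau$, and using $J\ge c(d,\sigma)$ to pass back, produces the extra factor $(t+\delta)r$. Summing over the $N$ charts finishes the argument.

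The crux — and the step I expect to be the main obstacle — is this last Fubini–type bound: the geometric fact that the nearest–point segments cover the thin collar $W_{r,t}$ with multiplicity bounded in terms of $d,\sigma$ only, i.e.\ that $J(z,\tau)\ge c(d,\sigma)>0$ on the collar. The difficulty is that $V_{4r}$, though convex and uniformly nondegenerate, may have boundary portions of arbitrarily large curvature at small scales, where the normal map degenerates for $\tau$ comparable to the (small) curvature radius. This should be reconciled by splitting $\partial V_{4r}$ into a ``good'' part, where the curvature radius exceeds $(t+\delta)r$ and the normal–coordinate change of variables is nondegenerate, and a ``bad'' part consisting of small high–curvature spots, whose contribution is controlled directly since there the associated segments are correspondingly short and the relevant region has small volume. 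For the intended application ($t$ and $\delta$ both small) the good case is all that is needed, and the chart–with–vertical–segments version of the argument makes the bookkeeping routine.
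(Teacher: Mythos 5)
Your overall strategy is the same as the paper's: the paper's proof is a one‑line reduction to the Poincar\'{e} inequality on the thin collar $W_{r,t}$ of the uniformly Lipschitz convex body $V_{4r}$, resting on exactly the two facts you isolate ($u$ vanishes on $B_{4r}\setminus V_{4r}$, and the Lipschitz constant of $V_{4r}$ depends only on $d,\sigma$ by Lemma \ref{lem.LipConst}); what you are doing is supplying a proof of that thin‑layer Poincar\'{e} inequality. Your fallback route --- Lipschitz graph charts for $\partial V_{4r}$ and integration along vertical segments starting just below the graph, where $u\equiv 0$ --- is the standard and correct way to do this: the map $(x',s)\mapsto(x',\psi(x')+s)$ has Jacobian identically $1$, Fubini is immediate, and the only price is that the vertical segments live in a collar of normal thickness $\sqrt{1+L^2}\,(t+\delta)r$ rather than $(t+\delta)r$, i.e.\ in $W_{r,Ct}$ with $C=C(d,\sigma)$; this is harmless both for the statement (absorb into $C$) and for the application in (\ref{est.uL2}), where the gradient integral is immediately enlarged to $D_{4\rho}$ anyway.

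Your primary route, however, has a genuine gap exactly where you suspect it. The lower bound $J(z,\tau)\ge c(d,\sigma)$ for the inward normal map $(z,\tau)\mapsto z+\tau\nu(z)$ is false for convex bodies satisfying only $|V_{4r}|\ge\sigma|B_{4r}|$: where $\partial V_{4r}$ is $C^2$ the Jacobian is $\prod_i(1-\tau\kappa_i(z))$ with principal curvatures $\kappa_i\ge 0$ that admit no upper bound. For instance, round off the tip of a wide cone by a sphere of radius $\epsilon\ll(t+\delta)r$: all inward normals from the spherical cap meet at depth $\epsilon$, the Jacobian vanishes there, and interior points at depth greater than $\epsilon$ near the axis are reached by normals from the lateral faces, not from the cap --- so the parametrization of the collar by $(z,\tau)$ with $\tau$ up to $(t+\delta)r$ is neither injective nor nondegenerate. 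The ``good/bad curvature'' splitting you propose to repair this is precisely the nontrivial part and is not carried out (and $\nu$ is only defined $\mathbb{H}^{d-1}$‑a.e., so edges and vertices need the same care). Since the chart version removes the problem entirely, I would discard the normal‑map parametrization and present the vertical‑segment argument as the proof, keeping your supporting‑hyperplane observation (that the segment continued past the nearest boundary point exits $\overline{V_{4r}}$, hence reaches the zero set of $u$) only if you insist on segments normal to $\partial V_{4r}$.
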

\begin{proof}
	Note that Lemma \ref{lem.flatness} implies $B_r \cap \Omega_{tr} \subset W_{r,t}$. By our assumption, we see $u = 0$ on $\partial V_{4r} \cap B_{4r}$. Then, (\ref{est.newPoincare}) follows from the Poincar\'{e} inequality in $W_{r,t}$, i.e.,
	\begin{equation*}
	\int_{W_{r,t}} u^2 \le C(t+\delta)^2 r^2 \int_{W_{r,t}} |\nabla u|^2,
	\end{equation*}
	where $C$ depends only on $d$ and $\sigma$. We point out that the last inequality is valid because the Lipschitz constant of $V_{4r}$ depends only on $d$ and $\sigma$ (after rescaling), and $W_{r,t}$ is a boundary layer with thickness $(t+\delta) r$.
\end{proof}

The following is the main theorem of this section.
\begin{theorem}\label{thm.Reverse Holder}
	Let $d\ge 4$ and $\Omega$ be a bounded $(\delta,\sigma,R)$-quasiconvex domain. Then for any $p\in (2,\infty)$, there exists $\delta_0>0$, depending only on $d,p$ and $\sigma$, such that if $\delta\in  (0,\delta_0)$, $r\in (0,R)$, and $u \in W^{2,2}(D_{2r})$ is the weak solution of
	\begin{equation}\label{est.weak.D2r}
	\left\{
	\begin{aligned}
	&\Delta^2 u = 0, \quad \txt{in } D_{2r}, \\
	&u = 0,\ \nabla u = 0, \quad \txt{on } \Delta_{2r},
	\end{aligned}
	\right.
	\end{equation}
	then
	\begin{equation}\label{est.reverseLp}
	\bigg( \fint_{D_{r/2}} |\nabla^2 u|^p \bigg)^{1/p} \le C\bigg( \fint_{D_{2r}} |\nabla^2 u|^2 \bigg)^{1/2},
	\end{equation}
	where $C$ depends only on $d$ and $\Omega$\footnote{Here and after, for convenience, we simply say $C$ depends on $\Omega$ if $C$ depends on $(\delta,\sigma,R)$ and/or the Lipschitz constant.}.
\end{theorem}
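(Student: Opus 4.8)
The plan is to deduce \eqref{est.reverseLp} from Theorem~\ref{thm.RealVar} (Shen's real variable argument) applied to $F = |\nabla^2 u|$ (extended by zero to all of $B_{2r}(Q)$ via the zero-extension of $u$), using as the ``good part'' the comparison with a solution on a convex domain, for which Theorem~\ref{thm.Hessian.Bound} gives the crucial $L^\infty$ bound on the Hessian. Concretely, I would fix $q \in (p,\infty)$ (so that $p \in (2,q)$) and work inside a fixed ball $B_0 \subset B_{2r}(Q)$; for each ball $B = B_\rho(x_0) \subset 2B_0$ with $|B|$ small, I would split into two cases. \emph{Interior case:} if $B_{4\rho}(x_0)$ is far from $\partial\Omega$ (say $\mathrm{dist}(x_0,\partial\Omega) \ge 4\rho$), then $u$ is biharmonic in $B_{4\rho}(x_0)$, so interior estimates and Meyers' estimate (Theorem~\ref{thm.Meyer}) directly give the reverse Hölder bound with exponent $q$; here $F_B = 0$, $R_B = F$. \emph{Boundary case:} if $B_{4\rho}(x_0)$ meets $\partial\Omega$, pick $Q' \in \partial\Omega$ with $|x_0 - Q'| < 4\rho$, let $V$ be the convex hull of $\Omega \cap B_{C\rho}(Q')$ (for a suitable fixed constant), and let $v$ be the weak solution of $\Delta^2 v = 0$ in $V \cap B_{C\rho}(Q')$ with zero Dirichlet data on $\partial V \cap B_{C\rho}(Q')$, chosen so that $w := u - v$ (both extended by zero) is small; then set $F_B = |\nabla^2 w|$ and $R_B = |\nabla^2 v|$. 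Theorem~\ref{thm.Hessian.Bound} applied on the convex domain $V$ yields $\|\nabla^2 v\|_{L^\infty} \lesssim \rho^{-2}(\fint |v|^2)^{1/2} \lesssim (\fint_{4B} |\nabla^2 u|^2)^{1/2}$, which is far stronger than the required $L^q$ bound on $R_B$.

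The heart of the matter is verifying the second condition in \eqref{est.RealCond}, namely that $(\fint_{2B} |\nabla^2 w|^2)^{1/2} \le \eta (\fint_{4B} |\nabla^2 u|^2)^{1/2}$ with $\eta = \eta(\delta) \to 0$ as $\delta \to 0$. The function $w = u - v$ is biharmonic in the common region and its boundary data is supported in the thin ``mismatch'' layer between $\partial\Omega$ and $\partial V$, whose width is $O(\delta\rho)$ by quasiconvexity. I would estimate $\int |\nabla^2 w|^2$ by an energy argument: using $w$ (or rather $w$ times a cutoff) as a test function, the $W^{2,2}$ norm of $w$ is controlled by its behaviour in the boundary layer $W_{r,t}$ of Lemma~\ref{lem.flatness}, and then Lemma~\ref{lem.newPoincare} (the Poincaré inequality on the thin layer) converts this into a gain of a factor $(t+\delta)^2\rho^2$, while the Caccioppoli inequality (Theorem~\ref{lem.Caccioppoli}) converts $\nabla^2 u$-mass in the layer into lower-order quantities. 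Iterating / optimizing the parameter $t$ against $\delta$, and invoking the self-improving ($L^{p_0}$, $p_0 > 2$) integrability of $\nabla^2 u$ from Meyers' estimate to upgrade the resulting $L^2$-of-the-layer bound (whose measure is $O((t+\delta)\rho^d)$ by Lemma~\ref{lem.flatness}) via Hölder into a small multiple of the full $L^2$ average, should produce $\eta \lesssim \delta^{\beta}$ for some $\beta > 0$ depending on $p_0$. This is the step I expect to be the main obstacle: carefully setting up the comparison solution $v$ on the convex hull so that the geometry matches, controlling the boundary-layer contributions with the right powers, and ensuring all constants depend only on $d$, $p$, $\sigma$ (not on $\delta$).

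Once both conditions of Theorem~\ref{thm.RealVar} hold with $\eta \le \eta_0(C_1, c_0, p, q)$ — which we arrange by choosing $\delta_0$ small enough that $\delta < \delta_0$ forces $\eta(\delta) \le \eta_0$ — the theorem gives $(\fint_{B_0} |\nabla^2 u|^p)^{1/p} \le C (\fint_{4B_0} |\nabla^2 u|^2)^{1/2}$ for balls $B_0$ centered near $Q$, with $C = C(d, p, \sigma)$. A standard covering argument over $D_{r/2}$ by such balls, combined with Meyers' estimate near the ``interior'' part of $D_{r/2}$ and the reverse Hölder self-improvement, then yields \eqref{est.reverseLp} on $D_{r/2}$ with the stated dependence of constants. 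I would also remark that since the argument only uses the convex-hull comparison and the exterior non-degeneracy (not a Lipschitz graph structure), the reverse Hölder inequality \eqref{est.reverseLp} itself does not require $\Omega$ to be Lipschitz, consistent with the comment in the introduction.
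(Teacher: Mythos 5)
Your proposal follows essentially the same route as the paper: zero-extension of $u$, comparison with a biharmonic function $v$ on the convex hull of $D_\rho$, the Mayboroda--Maz'ya bound (Theorem \ref{thm.Hessian.Bound}) for the $L^q$ (indeed $L^\infty$) control of $R_B=|\nabla^2 v|$, and Shen's real variable argument (Theorem \ref{thm.RealVar}) with $F_B=|\nabla^2(v-u)|$. The smallness step you flag as the main obstacle is carried out in the paper exactly as you sketch it --- an energy identity for $\Delta(v-\tilde u)$, a cutoff at scale $\delta\rho$, the thin-layer Poincar\'e inequality of Lemma \ref{lem.newPoincare} together with Lemma \ref{lem.flatness}, and H\"older against Meyers' $L^{p_0}$ integrability to produce $\eta\le C\delta^{1/2-1/p_0}$ --- so your plan is consistent with the paper's proof.
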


\begin{proof}
	Since $u$ and $\nabla u$ vanish on $\Delta_{2r} = \partial\Omega \cap B_{2r}$, we may first extend $u\in W^{2,2}(D_{2r})$ to $\tilde{u} \in W^{2,2}(B_{2r})$ by the zero-extension. Fix $\rho\in (0,r/16)$ and $D_\rho = D_\rho(Q)\subset D_r = D_r(Q)$. Since $\Omega$ is $(\delta,\sigma,R)$-quasiconvex, by Definition \ref{def.quasiconvex}, the convex hull of $D_\rho$, denoted by $V_\rho$, is a convex domain so that
	\begin{equation*}
	V_\rho \cap \Omega = D_\rho \quad \txt{and} \quad |d_H(\partial V_\rho, \partial D_\rho)| \le \delta \rho.
	\end{equation*}
	We now construct an approximation of $\tilde{u}$ in $V_\rho$. Let $v$ be the weak solution of
	\begin{equation*}
	\left\{
	\begin{aligned}
	&\Delta^2 v = 0, \quad \txt{in } V_\rho, \\
	&v = \tilde{u},\ \nabla v = \nabla \tilde{u}, \quad \txt{on } \partial V_\rho.
	\end{aligned}
	\right.
	\end{equation*}
	Note that $\tilde{u} = 0$ and $\nabla \tilde{u} = 0$ on $\partial V_\rho\cap B_\rho$. 
	
	We claim that
	\begin{equation}\label{est.claim1}
	\norm{\nabla^2 v}_{L^\infty(D_{\rho/2})} \le C \bigg( \fint_{D_{\rho}} |\nabla^2 u|^2 \bigg)^{1/2},
	\end{equation}
	and
	\begin{equation}\label{est.claim2}
	\bigg( \fint_{D_{\rho}} |\nabla^2 (v-u)|^2 \bigg)^{1/2} \le C\delta^{ \e } \bigg( \fint_{D_{8\rho}} |\nabla^2 u|^{2} \bigg)^{1/2},
	\end{equation}
	for some $\e>0$. 
	
	Due to the zero-extension, in the above claim, $D_\rho$ can be replaced by $B_\rho$ in the above estimates. Also, it may be applied to $B_\rho(Q')$ for any $Q'\in B_{r/2}(Q)$ and $\rho\le r/2$. Thanks to Theorem \ref{thm.RealVar}, this implies the desired estimate (\ref{est.reverseLp}). Actually, for any given $p\in (2,\infty)$, choose $\delta_0>0$ such that $C\delta_0^{\e}< \eta_0$. Then for any $\delta\in (0,\delta_0)$,we apply Theorem \ref{thm.RealVar} to $F = |\nabla^2 u|, R_B = |\nabla^2 v|,$ and $R_B = |\nabla^2(v- u)|$ and obtain (\ref{est.reverseLp}).
	
	It suffices to show the claims (\ref{est.claim1}) and (\ref{est.claim2}). First of all, (\ref{est.claim1}) follows from Theorem \ref{thm.Hessian.Bound} applied to $v$ in the convex domain $V_\rho$. Actually,
	\begin{equation*}
	\begin{aligned}
	\norm{\nabla^2 v}_{L^\infty(D_{\rho/2})} &\le \norm{\nabla^2 v}_{L^\infty(V_{\rho/2})} \\
	& \le C\bigg( \fint_{V_\rho} |\nabla^2 v|^2 \bigg)^{1/2} \\
	& \le C\bigg( \fint_{D_\rho} |\nabla^2 u|^2 \bigg)^{1/2},
	\end{aligned}
	\end{equation*}
	where we have used the energy estimate in the last inequality and the fact $|V_\rho|\simeq |D_\rho| \simeq \rho^d$ (due to the non-degeneracy condition (\ref{est.reg.cond})). Here and after, we say $A\simeq B$ if there exist positive constants $c$ and $C$ (depending only on the parameters of the domain) so that $cB \le A \le CB$.
	
	To see (\ref{est.claim2}), by the integration by parts, we have
	\begin{equation}\label{eq.DeltaU-V}
	\begin{aligned}
	\int_{V_\rho} |\Delta (v-\tilde{u})|^2 & = -\int_{V_\rho} \Delta \tilde{u} \Delta (v - \tilde{u}) \\
	& = -\int_{D_\rho} \Delta u \Delta (v - u).
	\end{aligned}
	\end{equation}
	Let $\Omega_t = \{ x\in \Omega: \text{dist}(x,\partial\Omega) < t \}$. Let $\theta_{\delta \rho}$ be a cut-off function such that $\theta_{\delta \rho} = 1$ in $\Omega\setminus \Omega_{2\delta \rho}$ and $\theta_{\delta \rho} = 0$ in $\Omega_{\delta \rho}$, and $|\nabla^k \theta_{\delta \rho}| \le C(\delta \rho)^{-k} $. Write
	\begin{equation}\label{est.DeltaUL2}
	\int_{V_\rho} |\Delta (v-\tilde{u})|^2 = -\int_{D_\rho} \Delta (\theta_{\delta \rho} u) \Delta (v - u) - \int_{D_\rho} \Delta ((1-\theta_{\delta \rho}) u) \Delta (v - u).
	\end{equation}
	
	By the integration by parts and the fact $\Delta^2 u =0$ in $D_\rho$, we write the first integral of (\ref{est.DeltaUL2}) as
	\begin{equation}\label{est.DeltaU1}
	\begin{aligned}
	& \int_{D_\rho} \Delta (\theta_{\delta \rho} u) \Delta (v - u) \\
	&\qquad = \int_{D_\rho} \Delta \theta_{\delta \rho} u \Delta(v-u) + 2\int_{D_\rho} \nabla \theta_{\delta \rho}\cdot \nabla u \Delta(v-u) \\
	& \qquad\qquad -\int_{D_\rho} \Delta \theta_{\delta \rho} \Delta u (v-u) - 2\int_{D_\rho} \nabla \theta_{\delta \rho} \cdot \nabla(v-u) \Delta u.
	\end{aligned}
	\end{equation}
	Note that $\nabla \theta_{\delta \rho}$ is supported in $\Omega_{2\delta \rho}$, and Lemma \ref{lem.flatness} implies
	\begin{equation*}
	D_\rho \cap \Omega_{2\delta \rho} \subset W_{\rho,2\delta}:= \{ x\in V_{4\rho}: \text{dist}(x,\partial V_{4\rho} \cap B_{3\rho}) < 3\delta \rho \}
	\end{equation*}
	and $|W_{\rho,2\delta}| \le C\delta \rho^d$.
	 Hence, using the vanishing boundary conditions, Lemma \ref{lem.newPoincare} and the Meyers' estimate, we have
	\begin{equation}\label{est.uL2}
	\begin{aligned}
	\bigg( \int_{D_\rho \cap \Omega_{2\delta \rho}} |u|^2 \bigg)^{1/2} + \delta \rho \bigg( \int_{D_\rho \cap \Omega_{2\delta \rho}} |\nabla u|^2 \bigg)^{1/2} &\le  C\delta \rho \bigg( \int_{W_{\rho,2\delta}} |\nabla u|^2 \bigg)^{1/2}\\
	& \le C(\delta \rho)^2 \bigg( \int_{W_{\rho,2\delta}} |\nabla^2 u|^2 \bigg)^{1/2} \\
	& \le C(\delta \rho)^{2}(\delta \rho^d)^\e \bigg( \int_{D_{4\rho}} |\nabla^2 u|^{p_0} \bigg)^{1/p_0},
	\end{aligned}
	\end{equation}
	where $\e = \frac{1}{2} - \frac{1}{p_0}>0$. In the last inequality, we also use the facts $W_{\rho,2\delta} \subset B_{4\rho}$ and $\nabla^2 u = 0$ in $B_{4\rho}\setminus D_{4\rho}$.
	
	Similarly, since $v-u$ and $\nabla(v-u)$ vanish on $\partial V_\rho$, and $D_\rho \cap \Omega_{2\delta\rho} \subset U_\rho:= \{x\in V_{\rho}: \text{dist}(x,\partial V_\rho) \le 3\delta \rho \}$, we may apply the Poincar\'{e} inequality on the layer $U_\rho$ to obtain
	\begin{equation}\label{est.uvL2}
	\begin{aligned}
	&\bigg( \int_{D_\rho \cap \Omega_{2\delta \rho} } |v - \tilde{u}|^2 \bigg)^{1/2} + \delta \rho\bigg( \int_{D_\rho \cap \Omega_{2\delta \rho}} |\nabla( v - \tilde{u})|^2 \bigg)^{1/2} \\
	&\qquad \le C(\delta \rho)^2 \bigg( \int_{ U_\rho} |\nabla^2 (v-\tilde{u})|^{2} \bigg)^{1/2} \\
	&\qquad \le C(\delta \rho)^2 \bigg( \int_{V_{\rho}} |\nabla^2 (v-\tilde{u})|^{2} \bigg)^{1/2},
	\end{aligned}
	\end{equation}
	where $C$ depends only on $d$ and $\sigma$, due to Lemma \ref{lem.LipConst}.
	
	Combining (\ref{est.uL2}) and (\ref{est.uvL2}), we obtain from (\ref{est.DeltaU1}) that
	\begin{equation}\label{est.DeltaUV}
	\begin{aligned}
	&\bigg| \int_{D_\rho} \Delta (\theta_{\delta \rho} u) \Delta (v - u) \bigg| \\
	&\quad \le C(\delta \rho^d)^{\e } \bigg( \int_{D_{4\rho}} |\nabla^2 u|^{p_0} \bigg)^{1/p_0} \bigg( \int_{V_\rho} |\nabla^2 (v-u)|^{2} \bigg)^{1/2}.
	\end{aligned}
	\end{equation}
	
	Similarly, the second integral in the right-hand side of (\ref{est.DeltaUL2}) has the same bound as (\ref{est.DeltaUV}). It follows that
	\begin{equation}\label{est.uVr}
	\begin{aligned}
	&\int_{V_\rho} |\Delta (v-\tilde{u})|^2 \\
	&\quad \le C(\delta \rho^d)^{\e } \bigg( \int_{D_{4\rho}} |\nabla^2 u|^{p_0} \bigg)^{1/p_0} \bigg( \int_{V_\rho} |\nabla^2 (v-u)|^{2} \bigg)^{\frac{1}{2}}.
	\end{aligned}
	\end{equation}
	
	Note that
	\begin{equation*}
	\int_{V_\rho} |\Delta (v-\tilde{u})|^2 = \int_{V_\rho} |\nabla^2 (v-\tilde{u})|^2.
	\end{equation*}
	
	Consequently, we obtain by the Meyers' estimate
	\begin{equation*}
	\begin{aligned}
	\bigg( \fint_{D_\rho} |\nabla^2 (v-\tilde{u})|^2 \bigg)^{1/2} &\le C\delta^{\e } \bigg( \fint_{D_{4\rho}} |\nabla^2 u|^{p_0} \bigg)^{1/p_0} \\
	& \le C\delta^{\e } \bigg( \fint_{D_{8\rho}} |\nabla^2 u|^{2} \bigg)^{1/2},
	\end{aligned}
	\end{equation*}
	which proves (\ref{est.claim2}) and hence completes the proof.
\end{proof}

\begin{corollary}\label{cor.DuCe}
	Let $d\ge 4$ and $\Omega$ be a bounded $(\delta,\sigma,R)$-quasiconvex domain. Then for any given $\e \in (0,1)$, there exists $\delta_0>0$, depending only on $d,\e$ and $\sigma$, such that if $\delta\in  (0,\delta_0)$, $r\in (0,R)$, and $u \in W^{2,2}(D_{2r})$ is the weak solution of (\ref{est.weak.D2r}), then for any $x,y\in D_r$
	\begin{equation}\label{est.Du.Holder}
	|\nabla u(x) - \nabla u(y)| \le C \bigg( \frac{|x-y|}{r} \bigg)^{\e} \bigg( \fint_{D_{2r}} |\nabla u|^2 \bigg)^{1/2},
	\end{equation}
	where $C$ depends only on $d,\e$ and $\Omega$.
\end{corollary}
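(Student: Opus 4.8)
The plan is to read off (\ref{est.Du.Holder}) from the reverse Hölder inequality of Theorem~\ref{thm.Reverse Holder}, combined with the Morrey embedding $W^{1,p}\hookrightarrow C^{0,1-d/p}$ for $p>d$, the Caccioppoli inequality, and the zero-extension property. I would fix $\e\in(0,1)$, set $p=d/(1-\e)>d$ so that $1-d/p=\e$, and take $\delta_0=\delta_0(d,p,\sigma)=\delta_0(d,\e,\sigma)$ to be the threshold furnished by Theorem~\ref{thm.Reverse Holder} for this exponent; assume $\delta<\delta_0$. The strategy is to first prove the estimate on a fixed fraction $D_{cr}(Q)$ of $D_r(Q)$ and then recover the full statement by a covering and rescaling argument, so domain-dependent constants are allowed throughout.

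The heart of the matter is a boundary $L^p$ bound for the Hessian. Applying Theorem~\ref{thm.Reverse Holder} at each center $Q'\in\Delta_{r/4}(Q)$ and at scale $r/8$ — legitimate because $D_{r/4}(Q')\subset D_{r/2}(Q)\subset D_{2r}(Q)$, where $u$ solves its equation — I would obtain
\[
\Big(\fint_{D_{r/16}(Q')}|\nabla^2 u|^{p}\Big)^{1/p}\le C\Big(\fint_{D_{r/4}(Q')}|\nabla^2 u|^{2}\Big)^{1/2}\le C\Big(\fint_{D_{r/2}(Q)}|\nabla^2 u|^{2}\Big)^{1/2},
\]
the last step using the non-degeneracy $|D_{r/4}(Q')|\simeq|D_{r/2}(Q)|\simeq r^d$. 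One application of the Caccioppoli inequality (Theorem~\ref{lem.Caccioppoli}) at the scale $2r$ — the scale on which $u$ is known to solve its equation — then converts the right-hand side into $Cr^{-1}\big(\fint_{D_{2r}(Q)}|\nabla u|^{2}\big)^{1/2}=:Cr^{-1}\mathcal{B}$, so that $\norm{\nabla^2 u}_{L^p(D_{r/16}(Q'))}\le Cr^{d/p-1}\mathcal{B}$ for every $Q'\in\Delta_{r/4}(Q)$.

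From here the argument is soft. I would extend $u$ by zero to $\tilde u\in W^{2,2}(B_{2r}(Q))$, so that $\nabla^2\tilde u=\nabla^2 u\,\mathbf{1}_{D_{2r}}$ a.e.\ and the bound above reads $\norm{\nabla^2\tilde u}_{L^p(B_{r/16}(Q'))}\le Cr^{d/p-1}\mathcal{B}$. Since $p>d$, the Morrey inequality on the ball $B_{r/16}(Q')$, applied to each component of $\nabla\tilde u$, gives $|\nabla\tilde u(x)-\nabla\tilde u(y)|\le C|x-y|^{1-d/p}\norm{\nabla^2\tilde u}_{L^p(B_{r/16}(Q'))}\le C(|x-y|/r)^{\e}\mathcal{B}$ for $x,y\in B_{r/32}(Q')$; restricting to $x,y\in D_{r/32}(Q')$, where $\nabla\tilde u=\nabla u$, this is exactly (\ref{est.Du.Holder}) localized near $Q'$. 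Since $\nabla\tilde u$ also vanishes on $B_{r/16}(Q')\setminus D_{r/16}(Q')$, a set of measure $\gtrsim r^d$ by the exterior non-degeneracy of quasiconvex domains, the mean term in Morrey's estimate is likewise $\lesssim\mathcal{B}$, giving $\norm{\nabla u}_{L^\infty(D_{r/32}(Q'))}\le C\mathcal{B}$. At interior points $z$ with $\text{dist}(z,\partial\Omega)\gtrsim r$, the same two bounds follow directly from the standard interior estimates for $\Delta^2 u=0$ on $B_{cr}(z)\subset\Omega$ together with the Caccioppoli inequality. A finite cover of $\overline{D_{r/8}(Q)}$ by such balls with bounded overlap, split into the cases $|x-y|$ small (both points in one good ball) and $|x-y|\gtrsim r$ (handled by the $L^\infty$ bound), then yields (\ref{est.Du.Holder}) for all $x,y\in D_{r/8}(Q)$. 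Finally, to pass from $D_{r/8}(Q)$ to $D_r(Q)$ I would reapply everything with $(Q,r)$ replaced by $(\bar Q,r/4)$ for a nearest boundary point $\bar Q$ of the point under consideration (or by an interior ball); since $D_{r/2}(\bar Q)\subset D_{2r}(Q)$ and $|D_{r/2}(\bar Q)|\simeq|D_{2r}(Q)|$, the right-hand side stays $C\mathcal{B}$, and one more finite covering of $\overline{D_r(Q)}$ together with the triangle inequality gives (\ref{est.Du.Holder}) with $C=C(d,\e,\Omega)$.

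I do not anticipate a genuine obstacle: all of the essential analysis — the self-improvement from the Meyers exponent $p_0$ to an arbitrarily large $p$ in a non-smooth domain — has already been carried out in Theorem~\ref{thm.Reverse Holder}. The one thing that needs care is the bookkeeping of scales, since Theorem~\ref{thm.Reverse Holder} and the Caccioppoli inequality each shrink the radius while every auxiliary ball must stay inside $D_{2r}$, where the equation is available; that is precisely why I would prove the estimate first on a fixed fraction of $D_r(Q)$ and then bootstrap it by the covering argument.
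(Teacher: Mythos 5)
Your proposal is correct and follows essentially the same route as the paper, whose entire proof of this corollary is the one-line observation that it ``follows readily from Theorem~\ref{thm.Reverse Holder} (with $p=d/(1-\e)$), Theorem~\ref{lem.Caccioppoli} and the Sobolev embedding theorem'' --- precisely the combination (reverse H\"{o}lder at exponent $p=d/(1-\e)$, Caccioppoli to pass from $\nabla^2 u$ to $\nabla u$, Morrey embedding via the zero-extension) that you carry out in detail. The scale bookkeeping and covering argument you supply are routine and consistent with what the paper leaves implicit.
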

\begin{proof}
	This follows readily from Theorem \ref{thm.Reverse Holder} (with $p = d/(1-\e)$), Theorem \ref{lem.Caccioppoli} and the Sobolev embedding theorem.
\end{proof}

\begin{remark}\label{rmk.d23}
	As mentioned in the introduction, the reverse H\"{o}lder inequality (\ref{est.reverseLp}) with $q>d$ is critical for us. We emphasize here that this can be shown in general Lipschitz domains for lower dimensions $d=2$ or $3$.  
	For $d = 2$, the Meyers' estimate gives (\ref{est.reverseLp}) with $p = 2+\e$. For $d = 3$, the Meyers' estimate alone is not enough, while the sharp $(R)_{2+\e}$ regularity in Lipschitz domains is also needed. To see this, let $u\in W^{2,2}(D_{2r})$ be the weak solution of (\ref{est.weak.D2r}). Without loss of generality, assume $D_{2r}$ is a Lipschitz domain. By the coarea formula and the Meyers estimate, there exists $t\in (1,2)$ so that
	\begin{equation}\label{est.Meyers.3d}
	\bigg( \fint_{\partial D_{tr} \cap \Omega } |\nabla^2 u|^{2+\e} d\sigma \bigg)^{1/(2+\e)} \le C \bigg( \fint_{D_{2r}} |\nabla^2 u|^2 \bigg)^{1/2}. 
	\end{equation}
	Due to the fact that $u =0 $ and $\nabla u = 0$ on $\Delta_{2r} = \partial\Omega \cap B_{2r}$, we use the $(R)_2$ regularity to obtain
	\begin{equation}\label{est.R2.3d}
	\norm{(\nabla^2 u)^*}_{L^{2+\e}(\partial D_{tr})} \le C\norm{\nabla^2 u}_{L^{2+\e}(\partial D_{tr} \cap \Omega)} \le Cr^{\frac{2}{2+\e} - \frac{3}{2}} \norm{\nabla^2 u}_{L^2(D_{2r})}.
	\end{equation}
	Now, recall a general inequality: $\norm{F}_{L^q(D_r)} \le C\norm{(F)^*}_{L^p(\partial D_r)}$ with $q = dp/(d-1)$; see e.g., \cite[Remark 9.3]{KLS13}. Hence, if $d =3$, combining (\ref{est.Meyers.3d}) and (\ref{est.R2.3d}) gives
	\begin{equation*}
	\norm{\nabla^2 u}_{L^{3+\frac{3\e}{2}}(D_{tr})} \le Cr^{\frac{2}{2+\e} - \frac{2}{3}} \norm{\nabla^2 u}_{L^2(D_{2r})}.
	\end{equation*}
	This implies
	\begin{equation*}
	\bigg( \fint_{D_{tr}} |\nabla^2 u|^{3+\frac{3\e}{2}}\bigg)^{1/(3+\frac{3\e}{2})} \le C\bigg( \fint_{D_{2r}} |\nabla^2 u|^2 \bigg)^{1/2},
	\end{equation*}
	which yields the desired estimate as $t\in (1,2)$.
\end{remark}

\section{Green's function}
This section is devoted to constructing the Green's function in quasiconvex domains and obtaining some global pointwise estimates.

For each $y\in \R^d$, let $\Gamma^y(x) = \Gamma(x,y)$ be the fundamental solution of $\Delta^2$ in $\R^d$ \cite{S07}:
\begin{equation*}
\Gamma(x,y) =  \left\{
\begin{aligned}
&\frac{1}{8\pi} |x-y|^2 (\ln|x-y|-1), \quad &d=2, \\
& \frac{-1}{2\omega_3} |x-y|, \quad &d =3, \\
& \frac{-1}{4\omega_4} \ln |x-y|, \quad & d = 4, \\
&  \frac{1}{2(n-2)(n-4)\omega_d|x-y|^{d-4}},\quad & d\ge 5,
\end{aligned}
\right.
\end{equation*}
where $\omega_d$ is the surface area of the $d$-dimensional unit sphere.

To construct the Green's function in $\Omega$, we assume $\Omega$ is a bounded quasiconvex domain. For any $y\in \Omega$, let $\gamma(x,y)$ be the solution of
\begin{equation}\label{eq.gamma}
\left\{
\begin{aligned}
&\Delta_x^2 \gamma(x,y) = 0, \quad \txt{in } \Omega, \\
&\gamma(x,y) = \Gamma(x,y),\ \nabla_x \gamma(x,y) = \nabla_x \Gamma(x,y), \quad \txt{on } \partial\Omega.
\end{aligned}
\right.
\end{equation}
Since $\Gamma(\cdot,y)$ is smooth in $\R^d\setminus \{y \}$, the variational solution of (\ref{eq.gamma}) exists and $\gamma(\cdot,y) \in W^{2,2}(\Omega)$. Define the Green's function
\begin{equation*}
G(x,y) = \Gamma(x,y) - \gamma(x,y), \qquad x,y\in \Omega.
\end{equation*}
Then, $G(x,y)$ satisfies
\begin{equation*}
\left\{
\begin{aligned}
&\Delta_x^2 G(x,y) = \delta_y(x), \quad \txt{in } \Omega, \\
&G(x,y) = 0,\ \nabla_x G(x,y) = 0, \quad \txt{on } \partial\Omega,
\end{aligned}
\right.
\end{equation*}
and $G(\cdot, y)\in W^{2,2}(\Omega\setminus B_r(y) )$ for any $r>0$. Moreover, by a standard argument, one may show the symmetry property:
\begin{equation*}
G(x,y) = G(y,x), \qquad \text{for any } x,y\in \Omega, x\neq y.
\end{equation*}

%By the Caccioppoli inequality and the reverse H\"{o}lder estimate in $(\delta,\sigma,R)$-quasiconvex domains, we have
%\begin{equation}\label{est.Du.Holder}
%|\nabla u(x) - \nabla u(y)| \le C \bigg( \frac{|x-y|}{\rho} \bigg)^{\e} \bigg( \fint_{D_{2\rho}(Q)} |\nabla u|^2 \bigg)^{1/2},
%\end{equation}
%for any $x,y\in D_{\rho}(Q)$, where $\e$ depends on $d,\delta$ and $\sigma$.

\begin{lemma}\label{lem.EstG}
	Let $d\ge 4$ and $\Omega$ be a $(\delta,\sigma,R)$-quasiconvex Lipschitz domain. There exists $\delta_0>0$, depending only on $d$ and $\sigma$, so that the Green's function $G(x,y)$ satisfies, for any $x,y\in \Omega$,
	\begin{equation}\label{est.G}
	|G(x,y)| \le \left\{
	\begin{aligned}
	&C\ln \frac{d_\Omega}{|x-y|}, \quad &d = 4, \\
	&\frac{C}{|x-y|^{d-4}}, \quad &d \ge 5,
	\end{aligned}
	\right.
	\end{equation}
	
	\begin{equation}\label{est.DxG.DyG}
	|\nabla_x G(x,y)| + |\nabla_y G(x,y)| \le \frac{C}{|x-y|^{d-3}},
	\end{equation}
	and
	\begin{equation}\label{est.DxDyG}
	|\nabla_x \nabla_y G(x,y)| \le \frac{C}{|x-y|^{d-2}}.
	\end{equation}
	In (\ref{est.G}), $d_\Omega$ is the diameter of $\Omega$.
\end{lemma}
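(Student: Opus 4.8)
The plan is to establish the three estimates by combining the local $C^{1,\e}$-regularity from Corollary \ref{cor.DuCe}, the global $(R)_2$ solvability, and the Caccioppoli inequality, with a standard dyadic decomposition argument to pass from interior/boundary estimates to the pointwise bounds. I would first reduce to estimating $G(\cdot,y)$ away from the pole. Fix $x,y\in\Omega$ and set $\rho = |x-y|/2$. On the ball $B_\rho(x)$, the function $G(\cdot,y)$ is biharmonic (the pole $y$ lies outside $B_{2\rho}(x)$). If $B_{2\rho}(x)\subset\Omega$, interior estimates for biharmonic functions give $|\nabla_x G(x,y)|\le C\rho^{-1}(\fint_{B_\rho(x)}|\nabla_x G|^2)^{1/2}$ and $|\nabla_x\nabla_y G(x,y)|\le C\rho^{-2}(\fint_{B_\rho(x)}|G(\cdot,y)|^2)^{1/2}$ after differentiating in $y$; if $B_{2\rho}(x)$ meets $\partial\Omega$, we instead invoke Corollary \ref{cor.DuCe} together with Theorem \ref{lem.Caccioppoli} on $D_{c\rho}(Q)$ for a nearby boundary point $Q$, using that $G(\cdot,y)$ and $\nabla_x G(\cdot,y)$ vanish on $\Delta_{c\rho}(Q)$. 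In either case, the task is reduced to controlling the local $L^2$ mean of $G(\cdot,y)$, resp. of $G$ itself, over $B_\rho(x)$ or $D_{c\rho}(Q)$.

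Next I would bound those local $L^2$ means. For \eqref{est.G} one can run the classical argument: $G(\cdot,y)$ is the Green's function, so testing the equation against $G(\cdot,y)$ on $\Omega\setminus B_s(y)$ and using the growth of the fundamental solution $\Gamma$ together with the vanishing boundary data yields, via a Caccioppoli/hole-filling iteration over dyadic annuli $B_{2^{k+1}s}(y)\setminus B_{2^k s}(y)$, the bound $\int_{B_{2s}(y)\setminus B_s(y)}|\nabla^2 G(\cdot,y)|^2\le C s^{d-4}$ (for $d\ge5$; a logarithmic version for $d=4$), and then Sobolev/Poincaré on each annulus converts this into the pointwise bound \eqref{est.G}. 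Alternatively, and more in the spirit of this paper, one feeds the Hessian bound from Theorem \ref{thm.Hessian.Bound}-type estimates (available here through Theorem \ref{thm.Reverse Holder}) into the Green's function construction. For \eqref{est.DxG.DyG} one differentiates: $\nabla_x G(x,y)$ is estimated by the $C^{1,\e}$ bound applied to $G(\cdot,y)$ on the scale $\rho$, reducing matters to $(\fint_{B_{c\rho}}|G(\cdot,y)|^2)^{1/2}\lesssim \rho^{-(d-4)}$, which is \eqref{est.G} at scale $\rho\simeq|x-y|$. For \eqref{est.DxDyG}, by symmetry $G(x,y)=G(y,x)$, so $\nabla_y G(\cdot,y)$ is itself a solution of the homogeneous biharmonic equation in $x$ with zero Cauchy data on $\partial\Omega$ (away from the pole); applying the $C^{1,\e}$ estimate in the $x$ variable to this function reduces \eqref{est.DxDyG} to $(\fint_{B_{c\rho}(x)}|\nabla_y G(\cdot,y)|^2)^{1/2}\lesssim \rho^{-(d-3)}$, which is the already-proven \eqref{est.DxG.DyG} integrated over the ball.

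The main obstacle, I expect, is the scale-matching near the boundary: when $x$ is close to $\partial\Omega$ relative to $|x-y|$, the ball $B_\rho(x)$ is not contained in $\Omega$, and one must carefully choose a surface ball $\Delta_{c\rho}(Q)$ with $Q\in\partial\Omega$, $|x-Q|\lesssim\rho$, so that $G(\cdot,y)$ solves \eqref{est.weak.D2r} on $D_{2\cdot c\rho}(Q)$ with $2c\rho<R$ and the pole $y$ stays outside $D_{4c\rho}(Q)$; here the quasiconvexity enters only through Corollary \ref{cor.DuCe}, but one has to track that the constant $\delta_0$ is the one from that corollary (with $\e$ fixed, e.g., $\e=1/2$) and that it depends only on $d$ and $\sigma$ as claimed. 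A secondary technical point is the dyadic summation producing the correct power of $|x-y|$: one decomposes $\Omega$ (or $\Omega\setminus B_{|x-y|}(y)$) into annular regions centered at $y$, applies the above local estimates on each, and sums a geometric series — convergent precisely because $d\ge4$ makes the relevant exponents have the right sign, with the borderline $d=4$ case generating the logarithm in \eqref{est.G}. Everything else is routine: Caccioppoli (Theorem \ref{lem.Caccioppoli}), Sobolev embedding, and the energy estimate for the variational solution $\gamma(\cdot,y)$ of \eqref{eq.gamma}.
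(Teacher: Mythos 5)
Your route runs in the opposite direction from the paper's, and the direction matters here. The paper proves \eqref{est.DxDyG} \emph{first}, by a duality argument: for $f_k$ supported in a ball away from $x_0$, it solves $\Delta^2 u=\nabla^2\cdot f_k$ with zero Cauchy data, uses the global energy estimate $\|\nabla^2u\|_{L^2(\Omega)}\le C\|f_k\|_{L^2}$, and then applies Corollary \ref{cor.DuCe} along a chain of balls $B_{r_\ell/4}(x_\ell)$ with $r_\ell=(5/3)^\ell r_0$ connecting $x_0$ to $\partial\Omega$ to convert the oscillation bounds $\osc[\nabla u]\le Cr_\ell^{1-d/2}\|f_k\|_{L^2}$ into the pointwise bound $|\nabla u(x_0)|\le Cr_0^{1-d/2}\|f_k\|_{L^2}$ (the series is summable precisely because $d\ge4$). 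Dualizing gives an $L^2$ average bound on $\nabla_x\nabla_y^2G(x_0,\cdot)$, a second application of the chain argument in the $y$ variable gives \eqref{est.DxDyG}, and \eqref{est.DxG.DyG} and \eqref{est.G} then follow by \emph{integrating} from the boundary using the vanishing Cauchy data. Your plan instead starts from \eqref{est.G} and differentiates upward, and this creates two genuine problems. First, at $d=4$ the bound \eqref{est.G} carries a logarithm, so Caccioppoli plus Corollary \ref{cor.DuCe} applied to $G(\cdot,y)$ at scale $\rho\simeq|x-y|$ yields only $|\nabla_xG|\lesssim\rho^{-1}\ln(d_\Omega/\rho)$, not the claimed $\rho^{-1}$; you cannot subtract constants to remove the log because the boundary Caccioppoli inequality requires the zero Cauchy data. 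Integrating downward, as the paper does, produces the logarithm only at the level of $G$ itself, where it belongs.

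Second, and more seriously, your foundational step --- the direct proof of \eqref{est.G} by ``testing the equation against $G(\cdot,y)$ on $\Omega\setminus B_s(y)$'' with a hole-filling iteration --- does not close for the biharmonic operator. (Note also that the claimed annulus bound $\int_{B_{2s}(y)\setminus B_s(y)}|\nabla^2G|^2\le Cs^{d-4}$ has the exponent inverted; dimensionally it must be $s^{4-d}$.) The Gr\"uter--Widman-type argument for second-order operators relies on the maximum principle and De Giorgi--Nash estimates, neither of which is available here; for $\Delta^2$ the cutoff generates boundary terms on $\partial B_s(y)$ involving third derivatives of $G$ near the pole, which are exactly the unknowns, and the correction $\gamma(\cdot,y)$ in $G=\Gamma-\gamma$ has an energy norm that degenerates as $y\to\partial\Omega$, so the ``routine energy estimate'' does not give a bound uniform in $\mathrm{dist}(y,\partial\Omega)$. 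Indeed, the pointwise bound $|G(x,y)|\lesssim|x-y|^{4-d}$ is known to be delicate in high dimensions even in nice domains, which is why the paper manufactures it from the duality estimate rather than from a direct pole analysis. The missing idea in your proposal is precisely this duality step: it is the only place where a \emph{global} input (the energy estimate for the inhomogeneous problem) enters, and without it the local regularity machinery of Corollary \ref{cor.DuCe} has nothing to bootstrap from.
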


\begin{proof}
	Fix $x_0, y_0\in \Omega$ and $r_0 = \frac{1}{2}|x_0 - y_0|$. 
	Let $L$ be the straight line through $x_0$ and $y_0$, and $\bar{x} = (x_0+y_0)/2$. Let $x_k$ be points on $L$ such that $|x_k - \bar{x}| = \frac{5}{3}|x_{k-1} - \bar{x}|$ for any $k = 1,2,\cdots$, and $|x_k - x_0|<|x_k - y_0|$. Then, it is not hard to see $r_k := |x_k - \bar{x}| = (\frac{5}{3})^{k} r_0$. Note that there are only finite points $\{ x_k \}$ contained in $\Omega \cap L$. Moreover, $\{ B_{r_k/4} (x_k) \}$ forms a sequence of balls connecting $x_0$ to the boundary $\partial\Omega$.  Similarly, we may let $y_k$ be the points on $L$ such that $|y_k -\bar{x}| = (\frac{5}{3})^{k} r_0$ and $|y_k - y_0|<|y_k-x_0|$. 
	%Observe that $B_{r_\ell}(x_\ell) \cap B_{r_k}(y_k) = \emptyset$ for any $\ell, k \in \N$. Let $N$ be the smallest natural number such that $B_{r_N/4}(y_N)$ intersects $\partial\Omega$.
	
	Let $M, N$ be the smallest natural numbers such that the ball $B_{r_M/4}(x_M)$ and $B_{r_N/4}(y_N)$ intersect $\partial\Omega$.
	
	We will use a duality argument. Let $f_k = (f_{k,ij}) \in C_0^\infty(B_{r_k}(y_k)\cap \Omega;\R^{d\times d})$ and $u\in W_0^{2,2}(\Omega)$ be the weak solution of 
	\begin{equation*}
	\left\{
	\begin{aligned}
	&\Delta^2 u = \nabla^2\cdot f_k, \quad \txt{in } \Omega, \\
	&u = 0,\ \nabla u = 0, \quad \txt{on } \partial\Omega.
	\end{aligned}
	\right.
	\end{equation*}
	Integrating the equation against $u$ and by the integration by parts, we have
	\begin{equation*}
	\int_{\Omega} |\nabla^2 u|^2 
	= \int_{\Omega} |\Delta u|^2 = \int_{B_{r_k}(y_k)} f_k\cdot \nabla^2 u,
	\end{equation*}	
	which yields
	\begin{equation}\label{est.Du2.f}
	\bigg( \int_{\Omega} |\nabla^2 u|^2 \bigg)^{1/2} \le C \bigg( \int_{B_{r_k}(y_k)} |f_k|^2 \bigg)^{1/2}.
	\end{equation}
	It follows that, for any $\ell\ge 0$,
	\begin{equation}\label{est.D2u.fk}
	\bigg( \fint_{B_{r_\ell}(x_\ell)} |\nabla^2 u|^2 \bigg)^{1/2} \le Cr_\ell^{-d/2} \bigg( \int_{B_{r_k}(y_k)} |f_k|^2 \bigg)^{1/2}.
	\end{equation}
	Observe that $B_{r_\ell}(x_\ell) \cap \txt{supp}(f_k) = \emptyset$. By Corollary \ref{cor.DuCe}, there exists $\delta_0>0$, depending only on $d$ and $\sigma$, so that if $\delta<\delta_0$,
	\begin{equation*}
	\begin{aligned}
	\underset{B_{r_\ell/4}(x_\ell)}{\osc} [\nabla u] &\le Cr_\ell \bigg( \fint_{B_{r_\ell}(x_\ell)} |\nabla^2 u|^2 \bigg)^{1/2} \\
	& \le  Cr_\ell^{1-d/2} \bigg( \int_{B_{r_k}(y_k)} |f_k|^2 \bigg)^{1/2},
	\end{aligned}
	\end{equation*}
	where $\underset{U}{\osc} [F] = \sup_{x,y\in U} |F(x) - F(y)|$ and we have used the Poincar\'{e} inequality and (\ref{est.D2u.fk}). By noting that $\nabla u(x) = 0$ on $\partial\Omega$ and $x_0$ is connected to the boundary through a sequence of balls $B_{r_\ell/4}(x_\ell)$ with $1\le \ell \le N$, we know that the above estimate actually implies 
	\begin{equation}\label{est.Du.fL2}
	\begin{aligned}
	|\nabla u(x_0)| &\le \sum_{\ell = 1}^{N} Cr_\ell^{1-d/2} \bigg( \int_{B_{r_k}(y_k)} |f_k|^2 \bigg)^{1/2} \\
	&\le Cr_0^{1-d/2} \bigg( \int_{B_{r_k}(y_k)} |f_k|^2 \bigg)^{1/2}.
	\end{aligned}
	\end{equation}
	%Using the fact $u(x) = 0$ on $\partial\Omega$, we may derive a pointwise estimate for $u(x)$ for any $x$ such that $|x-y_0|\ge 2r$. However, if $d = 4$, we have $|u(x)| \le C\ln(d_0/|x-y_0|) \norm{f}_{L^2(B_{r_k}(y_k))}$, which is suboptimal.

	Now, recall the representation formula
	\begin{equation*}
	\nabla u(x) = \int_{\Omega} \nabla_x \nabla_y^2 G(x,y) \cdot f_k(y)dy.
	\end{equation*}
	Thus, (\ref{est.Du.fL2}) implies
	\begin{equation*}
	\bigg| \int_{\Omega} \nabla_x \nabla_y^2 G(x_0,y)  \cdot f_k(y)dy. \bigg| \le \frac{C}{r_0^{d/2-1}} \bigg( \int_{B_{r_k}(y_k)} |f_k|^2 \bigg)^{1/2}.
	\end{equation*}
	By duality, it follows that for any $x,w\in B_{r_k/4}(x_k)$
	\begin{equation}\label{est.DxDDyG}
	\bigg( \fint_{B_{r_k}(y_k)} |\nabla_x \nabla_y^2 G(x_0,y)|^2 dy \bigg)^{1/2} \le \frac{C}{r_0^{d/2-1} r_k^{d/2} }.
	\end{equation}
	
	Note that $\nabla_x G(x,\cdot)=\nabla_x G(\cdot,x)$ is biharmonic in $\Omega\setminus \{x \}$ with vanishing boundary condition. Again, by (\ref{est.Du.Holder}), (\ref{est.DxDDyG}) and the reverse H\"{o}lder inequality, we have
	\begin{equation*}
	\underset{B_{r_k/4}(y_k)}{\osc} [\nabla_x \nabla_y G(x_0,\cdot)] \le \frac{C}{r_0^{d/2-1} r_k^{d/2-1} }.
	\end{equation*}
	Due to the boundary condition $\nabla_x \nabla_y G(x_0, y) = 0$ for $y\in \partial\Omega$, by connecting $y_0$ to the boundary through a sequence of $B_{r_k/4}(y_k)$, the above condition implies
	\begin{equation*}
	| \nabla_x \nabla_y G(x_0 ,y_0) | \le
	\sum_{k=1}^{M} \frac{C}{r_0^{d/2-1} r_k^{d/2-1}} \le
	\frac{C}{r_0^{d - 2}} = \frac{C}{|x_0 - y_0|^{d-2}},
	\end{equation*}
	which yields (\ref{est.DxDyG}). Then (\ref{est.DxG.DyG}) and (\ref{est.G}) follows easily by the fundamental theorem of calculus and the vanishing boundary conditions of $G(x,y)$.
\end{proof}

\begin{theorem}\label{thm.Gxy}
	Let $d\ge 4$ and $\Omega$ be a bounded $(\delta,\sigma,R)$-quasiconvex domain. Then for any given $\e \in (0,1)$, there exists $\delta_0>0$, depending only on $d,\e$ and $\sigma$, such that if $\delta\in  (0,\delta_0)$, the Green's function satisfies
	
	\begin{equation}\label{est.G.Holder}
	| G(x,y)| \le \frac{C\delta(x)^{1+ \e} \delta(y)^{1+\e} }{|x-y|^{d-2+2\e}},
	\end{equation}
	
	\begin{equation}\label{est.DxG.Holder}
	|\nabla_x G(x,y)| \le \frac{C\delta(x)^\e \delta(y)^{1+\e} }{|x-y|^{d-2+2\e}},
	\end{equation}
	
	\begin{equation}\label{est.DyG.Holder}
	|\nabla_y G(x,y)| \le \frac{C\delta(x)^{1+\e} \delta(y)^{\e} }{|x-y|^{d-2+2\e}},
	\end{equation}
	and
	\begin{equation}\label{est.DxDy.Holder}
	|\nabla_x \nabla_y G(x,y)| \le \frac{C\delta(x)^\e \delta(y)^\e }{|x-y|^{d-2+2\e}},
	\end{equation}
	where $\delta(x) = \text{\rm dist}(x,\partial\Omega)$ and $C$ depends only on $d,\e$ and $\Omega$.
\end{theorem}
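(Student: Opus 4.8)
The plan is to bootstrap the estimates of Lemma~\ref{lem.EstG} by systematically exploiting the clamped boundary conditions of $G$ together with the single analytic input that is already available, namely the boundary $C^{1,\e}$ estimate of Corollary~\ref{cor.DuCe}. Write $\rho=|x-y|$ and recall from Lemma~\ref{lem.EstG} the crude bounds $|\nabla_x\nabla_y G|\le C\rho^{2-d}$ and $|\nabla_x G|+|\nabla_y G|\le C\rho^{3-d}$. First I record the boundary conditions I will use: since $G(z,w)=0$ and $\nabla_z G(z,w)=0$ for $z\in\partial\Omega$ and all $w\in\Omega$, the symmetry $G(z,w)=G(w,z)$ lets me differentiate these identities to conclude that
\[
G(x,y)=0,\qquad \nabla_x G(x,y)=0,\qquad \nabla_y G(x,y)=0,\qquad \nabla_x\nabla_y G(x,y)=0
\]
whenever $x\in\partial\Omega$ or $y\in\partial\Omega$. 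Throughout there is a harmless dichotomy: if $\delta(x)\gtrsim\min(\rho,R)$ (or $\delta(y)\gtrsim\min(\rho,R)$) the claimed inequalities reduce, up to a constant depending on $\Omega$ and $\mathrm{diam}(\Omega)/R$, to those of Lemma~\ref{lem.EstG}; so the content is the ``boundary regime'' $\delta(x),\delta(y)\ll\min(\rho,R)$.

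The core step is a Hölder gain for the mixed second derivative. Fix $y$ and let $Q\in\partial\Omega$ be the nearest boundary point to $x$. For a radius $r\simeq\min(\rho,R)$ with $2\delta(x)\le r$ and $|Q-y|>2r$, the map $z\mapsto\nabla_y G(z,y)$ is, componentwise, a weak solution of the homogeneous clamped biharmonic problem in $D_{2r}(Q)$ (the pole $y$ lies outside this half-ball), so Corollary~\ref{cor.DuCe}, together with $\nabla_x\nabla_y G=0$ on $\partial\Omega$, gives
\[
|\nabla_x\nabla_y G(x,y)|\le C\Big(\tfrac{\delta(x)}{r}\Big)^{\e}\Big(\fint_{D_{2r}(Q)}|\nabla_x\nabla_y G(z,y)|^{2}\Big)^{1/2}.
\]
Since $|z-y|\simeq\rho$ on $D_{2r}(Q)$, Lemma~\ref{lem.EstG} bounds the average by $C\rho^{2-d}$, and (using $\rho\le\mathrm{diam}(\Omega)$ to replace $r$ by $\rho$ at the cost of an $\Omega$-dependent constant) this yields $|\nabla_x\nabla_y G(x,y)|\le C\,\delta(x)^{\e}\rho^{-(d-2+\e)}$ for all $x,y$. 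Running the identical argument in the $y$ variable on $w\mapsto\nabla_x G(x,w)$ — now feeding in the bound just obtained in place of Lemma~\ref{lem.EstG} for the averaged term, and again using $\nabla_x\nabla_y G=0$ on $\partial\Omega$ — upgrades this to (\ref{est.DxDy.Holder}): $|\nabla_x\nabla_y G(x,y)|\le C\,\delta(x)^{\e}\delta(y)^{\e}\rho^{-(d-2+2\e)}$.

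The remaining estimates are obtained by descending in the order $\nabla_x\nabla_y G\rightsquigarrow\nabla_x G,\nabla_y G\rightsquigarrow G$ via the fundamental theorem of calculus along the segment joining a point to its nearest boundary point. Assume $\delta(y)\le\rho/2$ and let $P\in\partial\Omega$ realize $\delta(y)$; then $(P,y)\subset\Omega$, $\delta(P+t(y-P))\le t\,\delta(y)$, and $|x-(P+t(y-P))|\ge\rho/2$. Using $\nabla_x G(x,P)=0$ and integrating (\ref{est.DxDy.Holder}) along this segment gives
\[
|\nabla_x G(x,y)|\le\delta(y)\int_0^1|\nabla_x\nabla_y G(x,P+t(y-P))|\,dt\le C\,\delta(x)^{\e}\delta(y)^{1+\e}\rho^{-(d-2+2\e)},
\]
which is (\ref{est.DxG.Holder}); (\ref{est.DyG.Holder}) is symmetric, and integrating $\nabla_y G$ from $G(x,P)=0$ in the same way produces (\ref{est.G.Holder}).

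The main obstacle — and the only place any care is needed — is the bookkeeping of scales and the configurations where one point is \emph{not} close to $\partial\Omega$ relative to $|x-y|$. Applying Corollary~\ref{cor.DuCe} at radius $r\simeq\min(\rho,R)$ rather than $r\simeq\rho$ is essential, and it is only because $R\le\rho\le\mathrm{diam}(\Omega)$ forces $\rho/R$ to be bounded that the gain factor can be written as $(\delta(x)/\rho)^{\e}$ with an $\Omega$-dependent constant; this is what keeps the denominators at the scale $|x-y|$. Second, when $\delta(y)>\rho/2$ (or $\delta(x)>\rho/2$) the segment from $y$ to $\partial\Omega$ is too long and may pass near $x$, so the fundamental-theorem step in that variable fails; there one instead either invokes Lemma~\ref{lem.EstG} directly (if the other point is also far from $\partial\Omega$) or applies Corollary~\ref{cor.DuCe} once to $z\mapsto G(z,y)$ on a half-ball about the nearest boundary point of $x$ and controls the average by the coarse bound $|\nabla_z G(z,y)|\le C\rho^{3-d}$, noting $\rho^{1+\e}\lesssim\delta(y)^{1+\e}$ in this regime, so the outcome is still of the stated form. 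Once these two points are handled, the four inequalities follow.
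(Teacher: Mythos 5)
Your proposal is correct and takes essentially the same route as the paper: a two-step bootstrap in which Corollary \ref{cor.DuCe} is applied successively to $\nabla_y G(\cdot,y)$ and $\nabla_x G(x,\cdot)$ near the respective nearest boundary points (starting from the crude bounds of Lemma \ref{lem.EstG}) to produce (\ref{est.DxDy.Holder}), after which the remaining three estimates follow by integrating along segments to the boundary and using the clamped boundary conditions. Your extra bookkeeping (taking $r\simeq\min(|x-y|,R)$ and treating the regime $\delta(y)\gtrsim|x-y|$ separately) is a correct and slightly more careful rendering of steps the paper leaves implicit.
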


\begin{proof}
	We show (\ref{est.DxDy.Holder}) first. Observe that $H(x,\cdot) := \nabla_x G(x,\cdot)$ is a solution in $\Omega\setminus \{x\}$. Fix $x,y\in \Omega$ and $r = |x-y|$. Without loss of generality, assume $\delta(y) < \frac{1}{3} r$ and let $Q_y\in \partial\Omega$ be a point that $\delta(y) = |y - Q_y|$. We apply (\ref{est.Du.Holder}) and (\ref{est.DxDyG}) to $H(x,\cdot)$ in $D_{2r}(Q_y)$ and obtain
	\begin{equation*}
	\begin{aligned}
	&|\nabla_y H(x,y) - \nabla_y H(x,Q_y)| \\
	&\qquad \le C\bigg( \frac{|y-Q_y|}{r} \bigg)^\e \bigg( \fint_{D_{2r}(Q_y)} |\nabla_z H(x,z)|^2 dz \bigg)^{1/2} \\
	& \qquad \le C \frac{\delta(y)^\e }{|x-y|^{d-2+\e} }.
	\end{aligned}
	\end{equation*}
	Recall that $\nabla_y H(x,Q_y) = 0$. Hence,
	\begin{equation}\label{est.DxDyG.dy}
	|\nabla_y\nabla_x G(x,y)| \le C \frac{\delta(y)^\e }{|x-y|^{d-2+\e} }.
	\end{equation}
	
	Now, consider $H_1(\cdot,y)$ = $\nabla_y G(\cdot, y)$. Using the same argument as before and (\ref{est.DxDyG.dy}), we have
	\begin{equation*}
	|\nabla_x H_1(x,y)| \le C \bigg( \frac{\delta(x)}{|x-y|} \bigg)^\e \frac{\delta(y)^\e }{|x-y|^{d-2+\e} },
	\end{equation*}
	which implies (\ref{est.DxDy.Holder}).
	
	Next, (\ref{est.DxG.Holder}) and (\ref{est.DyG.Holder}) follow by integrating (\ref{est.DxDy.Holder}) in $y$ or in $x$, respectively, and using the boundary condition $\nabla_x G(x,\cdot) = \nabla_y G(\cdot,y) = 0$ on $\partial\Omega$. Finally, (\ref{est.G.Holder}) follows readily by integrating (\ref{est.DxG.Holder}) in $x$ and using the fact $G(\cdot, y) = 0$ on $\partial\Omega$.
\end{proof}

\section{Maximal principle}
Let $(f,g) \in \WA^{1,\infty}(\partial\Omega)$. In view of Remark \ref{rmk.D2Dn}, we may rewrite the equation (\ref{eq.Dp}) as
\begin{equation}\label{eq.bi.fg}
\left\{
\begin{aligned}
&\Delta^2 u = 0, \quad \txt{in } \Omega, \\
&u = f,\  \frac{\partial}{\partial n}u = h, \quad \txt{on } \partial\Omega,
\end{aligned}
\right.
\end{equation}
where $h = n\cdot g$ on $\partial\Omega$. Due to the $(D)_2$ solvability in Lipschitz domains \cite{DKV86}, we know there is a unique solution of (\ref{eq.bi.fg}) so that 
\begin{equation*}
\norm{(\nabla u)^*}_{L^2(\partial\Omega)} \le C\norm{g}_{L^2(\partial\Omega)} \simeq C\big( \norm{\nabla_{\tan} f}_{L^2(\partial\Omega)}  +\norm{h}_{L^2(\partial\Omega)} \big).
\end{equation*}

The purpose of this section is to prove Theorem \ref{thm.MP}. Precisely, we would like to show the solution of (\ref{eq.bi.fg}) satisfies
\begin{equation}\label{est.Du.Linfty}
\norm{\nabla u}_{L^\infty(\Omega)} \le C\norm{g}_{L^\infty(\partial\Omega)} \simeq C\big( \norm{\nabla_{\tan} f}_{L^\infty(\partial\Omega)}  +\norm{h}_{L^\infty(\partial\Omega)} \big).
\end{equation}
The starting point of the proof is the integral representation for the solution of (\ref{eq.bi.fg}) in terms of the Green's function
\begin{equation}\label{eq.IntRepre}
u(x) = \int_{\partial \Omega} \frac{\partial}{\partial n} \Delta_Q G(Q,x) f(Q) d\sigma(Q) - \int_{\partial \Omega} \Delta_Q G(Q,x) h(Q) d\sigma(Q).
\end{equation}
Since $u$ is biharmonic in $\Omega$, the interior estimate implies
\begin{equation*}
|\nabla u(x)| \le \frac{C \sup \{ |u(y)|: y\in B_{\delta(x)/2}(x) \} }{\delta(x)}.
\end{equation*}
Consequently, to show (\ref{est.Du.Linfty}), it suffices to show the pointwise estimate of $u(x)$ for $x\in \Omega$. This will be done by considering the two integrals in (\ref{eq.IntRepre}) separately. For convenience, throughout this section, we fix a small absolute value $\e \in (0,1)$ and let $\delta_0 >0$ be given by Corollary \ref{cor.DuCe} (or Theorem \ref{thm.Gxy}). Since $\Omega$ is a Lipschitz domain and $\sigma$ can be determined in terms of the Lipschitz constant, $\delta_0$ actually depends only on $d$ and the Lipschitz constant.

\begin{lemma}\label{lem.DeltaG}
	Let $d\ge 4$ and $\Omega$ be a bounded $(\delta,\sigma,R)$-quasiconvex Lipschitz domain with $\delta<\delta_0$. Then
	\begin{equation*}
	\int_{\partial\Omega} |\Delta_Q G(Q,x)| d\sigma(Q) \le C\delta(x),
	\end{equation*}
	where $C$ depends only on $d$ and $\Omega$.
\end{lemma}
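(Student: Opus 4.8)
The plan is to estimate $\int_{\partial\Omega} |\Delta_Q G(Q,x)|\, d\sigma(Q)$ by a dyadic decomposition of $\partial\Omega$ according to distance from $x$, combined with the pointwise bounds on $\nabla_x\nabla_y G$ from Lemma~\ref{lem.EstG} (and the refined boundary-decay estimates from Theorem~\ref{thm.Gxy}). The key preliminary observation is that $\Delta_Q G(Q,x)$, for $Q$ near $\partial\Omega$, must be controlled by first-order information because $G(\cdot,x)$ and $\nabla_Q G(\cdot,x)$ both vanish on $\partial\Omega$; but $\Delta_Q G$ is a second-order quantity, so a direct pointwise bound $|\Delta_Q G(Q,x)| \lesssim |Q-x|^{-(d-2)}$ (from interior estimates applied on a ball of radius $\sim\delta(Q)$, using $G(\cdot,x)=0$ near $Q$) only gives a logarithmically divergent integral when $\delta(x)$ is comparable to the diameter. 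The trick — which is exactly the ``classical technique from \cite{PV93,S95}'' advertised in the introduction — is to avoid integrating $|\Delta_Q G|$ pointwise and instead integrate by parts on $\partial\Omega$, or to use that $\Delta_Q G(Q,x) = n(Q)\cdot\nabla_Q(\text{something})$ against a test function, thereby transferring one derivative off $\Delta_Q G$ and onto a cutoff.

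Concretely, here is the route I would take. First, fix $x\in\Omega$, set $\rho=\delta(x)$, and pick $Q_0\in\partial\Omega$ with $|x-Q_0|=\rho$. Decompose $\partial\Omega = \bigcup_j \Sigma_j$ where $\Sigma_0 = \Delta_{C\rho}(Q_0)$ and $\Sigma_j = \Delta_{2^j C\rho}(Q_0)\setminus \Delta_{2^{j-1}C\rho}(Q_0)$ for $j\ge 1$, up to the scale $\sim d_\Omega$. On $\Sigma_j$ one has $|Q-x|\simeq 2^j\rho$ and $\sigma(\Sigma_j)\lesssim (2^j\rho)^{d-1}$. Second, on each annulus I would bound $\int_{\Sigma_j}|\Delta_Q G(Q,x)|\,d\sigma(Q)$ not by the crude pointwise bound but by exploiting the vanishing of $G$ and $\nabla_Q G$ on $\partial\Omega$: write $\Delta_Q G(Q,x) = \sum_k \partial_{Q_k}^2 G(Q,x)$ and use that each $\partial_{Q_k} G(\cdot,x)$ vanishes on $\partial\Omega$ to integrate by parts along the boundary against a suitable partition-of-unity function adapted to scale $2^j\rho$; this produces $\nabla_x\nabla_Q G$-type terms (bounded by $C|Q-x|^{-(d-2)}$ from Lemma~\ref{lem.EstG}, \eqref{est.DxDyG}, or better, $C\delta(x)^\e |Q-x|^{-(d-2+\e)}$ from \eqref{est.DxDy.Holder}) plus lower-order terms with extra decay from $\nabla_Q G, G$ vanishing. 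Third, sum the geometric series: the generic annulus contributes $\lesssim (2^j\rho)^{d-1}\cdot \delta(x)^\e (2^j\rho)^{-(d-2+\e)} = \delta(x)^\e (2^j\rho)^{1-\e}$, which near $j$ small gives $\sim\delta(x)$ and — because of the extra $\delta(x)^\e$ gained from Theorem~\ref{thm.Gxy} and the loss $2^{j(1-\e)}$ being beaten against a finite range $2^j\rho\le d_\Omega$ — sums to $C\delta(x)\cdot(d_\Omega/\delta(x))^{1-\e}\cdot(\delta(x)/d_\Omega)^{\cdots}$; more carefully, using the sharp weighted bound one should arrange the exponents so that the sum over $j$ telescopes to $Cd_\Omega^{?}\delta(x)$, i.e.\ is $\lesssim \delta(x)$ with constant depending on $d_\Omega$ (hence on $\Omega$).

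The main obstacle I expect is precisely making the annular sum converge (or at worst grow like $\delta(x)$ times a domain constant) rather than like $\delta(x)\log(d_\Omega/\delta(x))$. The naive second-derivative bound $|\Delta_Q G(Q,x)|\lesssim |Q-x|^{2-d}$ integrated over $\partial\Omega$ gives $\int \sim \int_\rho^{d_\Omega} t^{d-2}\cdot t^{2-d}\,\frac{dt}{t}\cdot$ (no — one power too many), in fact it gives $\sim\delta(x)\log(d_\Omega/\delta(x))$ on the far annuli and thus fails by a logarithm; one genuinely must extract the extra decay. There are two ways to do it, and I would pursue whichever the paper's earlier lemmas most directly support: (a) use the Hölder-improved estimate \eqref{est.DxDy.Holder}, $|\nabla_x\nabla_Q G|\lesssim \delta(x)^\e\delta(Q)^\e|Q-x|^{-(d-2+2\e)}$, after integrating by parts once on the boundary to reduce $\Delta_Q G$ to $\nabla_x\nabla_Q G$; since $\delta(Q)=0$ on $\partial\Omega$ this looks worrying, but the point is that integration by parts along $\partial\Omega$ is tangential and $\Delta_Q$ is not purely tangential, so one should instead write $\Delta_Q G = \partial_{nn}G + (\text{tangential } 2\text{nd order}) + (\text{1st order})$, handle the tangential-tangential part by tangential integration by parts (gaining decay, no boundary terms since $\nabla_{\tan}G=0$), and handle $\partial_{nn}G$ via the biharmonic equation $\Delta^2 G=0$ to trade it for other derivatives — a standard maneuver from \cite{S95}; or (b) more simply, cover $\Sigma_j$ by $\sim 2^{j(d-1)}$ balls $B_{c\rho 2^{j'}}$... and bound $\|\Delta_Q G(\cdot,x)\|_{L^1(\Delta_s(P))}\lesssim s^{d-1}\cdot s^{-1}\cdot\|\nabla_Q G(\cdot,x)\|_{L^\infty(D_s(P))}$ by a boundary Caccioppoli / trace estimate (Theorem~\ref{lem.Caccioppoli}) plus $\|\nabla_Q G\|_{L^\infty}\lesssim s^{-1}\|G\|_{L^\infty(D_{2s})}\lesssim s^{-1}\cdot\delta(x)^{1+\e}s^{-(d-3+2\e)}$ from \eqref{est.G.Holder}, giving per-annulus contribution $\lesssim (2^j\rho)^{d-1}(2^j\rho)^{-1}(2^j\rho)^{-(d-2+2\e)}\delta(x)^{1+\e} = \delta(x)^{1+\e}(2^j\rho)^{-2\e}$, which sums over $j\ge 0$ to $\lesssim \delta(x)^{1+\e}\rho^{-2\e}\cdot(1-2^{-2\e})^{-1} = C_\e\,\delta(x)^{1-\e}\le C\delta(x)$ once one notes $\delta(x)\le d_\Omega$. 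Route (b) is cleaner and I would write the proof that way; the only care needed is the boundary trace/Caccioppoli step showing $\int_{\Delta_s}|\Delta_Q G|\,d\sigma \lesssim s^{d-2}\|\nabla_Q G\|_{L^\infty(D_{2s})}$, which follows from a local energy estimate for $\Delta^2 G(\cdot,x)=0$ away from $x$ together with the vanishing Dirichlet data.
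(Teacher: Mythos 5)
Your overall architecture (dyadic decomposition of $\partial\Omega$ at scales $2^j\delta(x)$, then the weighted pointwise bounds of Theorem \ref{thm.Gxy} to sum the geometric series) is the same as the paper's, and your exponent count showing that the far annuli contribute a convergent series rather than a logarithm is the right diagnosis. But route (b), which you say you would actually write up, has a genuine gap at its central step: the claimed ``boundary Caccioppoli / trace estimate''
\begin{equation*}
\int_{\Delta_s(P)}|\Delta_Q G(Q,x)|\,d\sigma(Q)\ \lesssim\ s^{d-2}\,\norm{\nabla_Q G(\cdot,x)}_{L^\infty(D_{2s}(P))}
\end{equation*}
does not follow from ``a local energy estimate plus vanishing Dirichlet data.'' A Caccioppoli inequality controls solid integrals $\int_{D_{s/2}}|\nabla^2 G|^2$ by $s^{-2}\int_{D_s}|\nabla G|^2$; it says nothing about the restriction of $\nabla^2 G$ to the Lipschitz boundary $\partial\Omega$ itself, which for a function that is merely $W^{2,2}$ up to the boundary need not even be defined a.e., let alone integrable. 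The whole difficulty of the lemma is precisely this trace step, and the paper supplies it with the $L^2$ regularity estimate $(R)_2$ in the subdomains $D_{k\ell}$: one bounds $J_{k\ell}$ by Cauchy--Schwarz and $\norm{(\nabla^2 G)^*_{D_{k\ell}}}_{L^2(\partial D_{k\ell})}$, applies $(R)_2$ so that only $\nabla_{\tan}\nabla G$ on $\partial D_{k\ell}$ survives (the portion on $\Delta_{k\ell}$ vanishes because $G=\nabla G=0$ there), and then uses the coarea formula in the radius $t\in[1,2]$ to convert the remaining surface integral over $\partial(tB_{k\ell})\cap\Omega$ into a volume integral, to which Caccioppoli and \eqref{est.DxG.Holder} apply. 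Your route (a) gropes toward the right circle of ideas (Rellich-type manipulations) but is too vague to certify; the concrete missing ingredient in (b) is the $(R)_2$ solvability.

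There is also an arithmetic error at the end of route (b): your per-annulus bound $\delta(x)^{1+\e}(2^j\rho)^{-2\e}$ sums to $C\delta(x)^{1-\e}$, and the inequality $\delta(x)^{1-\e}\le C\delta(x)$ is false for small $\delta(x)$ (it goes the wrong way, since $\delta(x)^{-\e}\to\infty$). The extra loss of $s^{-\e}$ comes from misrecording the bound on $\norm{G}_{L^\infty(D_{2s})}$: from \eqref{est.G.Holder} with $\delta(y)\lesssim s$ one gets $\delta(x)^{1+\e}s^{-(d-3+\e)}$, not $s^{-(d-3+2\e)}$. Using \eqref{est.DyG.Holder} directly, the per-annulus contribution is $\delta(x)^{1+\e}(2^j\rho)^{-\e}$, which with $\rho=\delta(x)$ sums to $C\delta(x)$ as required; so this part is repairable, unlike the trace step.
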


\begin{proof}
	Let $\{ B_{k\ell} = B_{2^k \delta(x)}(Q_{k\ell}) : 1\le \ell \le N, k\ge 0, Q_{k\ell}\in \partial\Omega\}$ be a sequence of balls that cover $\partial\Omega$ with finite overlaps, where $N$ is a finite number depending only on dimension. Moreover,
	\begin{equation*}
	\txt{dist}(x, 4B_{k\ell} ) \simeq 2^k \delta(x).
	\end{equation*}
	Let $D_{k\ell} = B_{k\ell}\cap \Omega$ and $\Delta_{k\ell} = B_{k\ell}\cap \partial\Omega$. For each $k$ and $\ell$, we estimate
	\begin{equation*}
	J_{k\ell} = \int_{\Delta_{k\ell}} |\Delta_Q G(Q,x)| d\sigma(Q).
	\end{equation*}
	
	Using the $(R)_2$ regularity in Lipschitz domain $D_{k\ell}$, we have
	\begin{equation*}
	\begin{aligned}
	J_{k\ell} &\le |\Delta_{k\ell}|^{1/2} \norm{(\nabla^2 G)^*_{D_{k\ell}}(\cdot,x)}_{L^2(\Delta_{k\ell})} \\
	& \le C|\Delta_{k\ell}|^{1/2} \big(\norm{\nabla_{\tan}\nabla G(\cdot,x)}_{L^2(\partial B_{k\ell}\cap \Omega)} +  \norm{\nabla_{\tan} \nabla G(\cdot,x) }_{L^2(\Delta_{k\ell})} \big).
	\end{aligned}
	\end{equation*}
	Since $G(\cdot,x) = \nabla G(\cdot,x) = 0$ on $\partial\Omega$, we see that the last term in the above inequality vanishes. It follows
	\begin{equation*}
	(J_{k\ell})^2 \le C|\Delta_{k\ell}| \int_{\partial B_{k\ell}\cap \Omega} |\nabla^2_y G(y,x)|d\sigma(y).
	\end{equation*}
	
	At this point, one may realize the right-hand  side of the above inequality does not have a good estimate. To overcome this difficulty,  we will use the coarea formula to reduce the surface integral into a volume integral.
	Precisely, for $t\in [1,2]$, let $D_{k\ell}^t = (tB_{k\ell})\cap \Omega$ and $\Delta_{k\ell}^t = (tB_{k\ell}) \cap \partial\Omega$. By the similar argument as above, one can show
	\begin{equation}\label{est.Jkl.t}
	(J_{k\ell}^t )^2 \le C|\Delta_{k\ell}| \int_{\partial (t B_{k\ell})\cap \Omega} |\nabla^2_y G(y,x)|d\sigma(y).
	\end{equation}
	Observing $J_{k\ell} \le J_{k\ell}^t$ and integrating (\ref{est.Jkl.t}) in $t$ over $[1,2]$, one has
	\begin{equation*}
	\begin{aligned}
	(J_{k\ell})^2 &\le \int_1^2 (J_{k\ell}^t)^2 dt \le \frac{C|\Delta_{k\ell}|}{2^{k} \delta(x)} \int_{D_{k\ell}^2} |\nabla_y^2 G(y,x)|^2 dy \\
	& \le  \frac{C|\Delta_{k\ell}|}{2^{3k} \delta(x)^3} \int_{D_{k\ell}^4} |\nabla_y G(y,x)|^2 dy \\
	& \le \frac{C|\Delta_{k\ell}|}{2^{3k} \delta(x)^3} \int_{D_{k\ell}^4} \Big( \frac{C\delta(y)^\e \delta(x)^{1+\e} }{|x-y|^{d-2+2\e}} \Big)^2 dy \\
	& \le C\frac{\delta(x)^2}{2^{2k\e}},
	\end{aligned}
	\end{equation*}
	where we have used the co-area formula in the second inequality, the Caccioppoli inequality in the third inequality (\ref{est.Caccioppoli}) and (\ref{est.DxG.Holder}) in the forth inequality.
	Consequently,
	\begin{equation*}
	\int_{\partial\Omega} |\Delta_Q G(Q,x)| d\sigma(Q) \le \sum_{k = 0}^{\infty} \sum_{\ell = 1}^N J_{k\ell} \le C\delta(x).
	\end{equation*}
	The proof is complete.
\end{proof}

Next, we need to deal with the second integral of (\ref{eq.IntRepre}).
\begin{lemma}\label{lem.DnDeltaG}
	Let $d\ge 4$ and $\Omega$ be a bounded $(\delta,\sigma,R)$-quasiconvex Lipschitz domain with $\delta<\delta_0$. Let
	\begin{equation*}
	u(x) = \int_{\partial \Omega} \frac{\partial}{\partial n} \Delta_Q G(Q,x) f(Q) d\sigma(Q).
	\end{equation*}
	Then
	\begin{equation*}
	|u(x) - u(P)| \le C\delta(x) \norm{\nabla_{\tan} f}_{L^\infty(\partial\Omega)},
	\end{equation*}
	where $P$ is the point on $\partial\Omega$ such that $x\in \Gamma(P)$. 
\end{lemma}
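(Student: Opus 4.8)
The plan is to trade the third-order kernel $\frac{\partial}{\partial n}\Delta_Q G(Q,x)$ for the second-order kernel $\Delta_Q G(Q,x)$ — already controlled in Lemma \ref{lem.DeltaG} — by peeling off a harmonic extension of $f$. First, let $F$ be the harmonic extension of $f$ into $\Omega$ (i.e. $\Delta F=0$ in $\Omega$, $F=f$ on $\partial\Omega$). The ingredient I would need here is that, since $\Omega$ is quasiconvex with $\delta<\delta_0$, $F$ is $C^1$ up to $\partial\Omega$ and satisfies the scale-invariant bound
\begin{equation*}
\|\nabla F\|_{L^\infty(\overline{\Omega})}\le C\,\|\nabla_{\tan}f\|_{L^\infty(\partial\Omega)},\qquad C=C(d,\Omega);
\end{equation*}
this is the $(R)_\infty$ regularity estimate for the Laplacian in quasiconvex domains — the (considerably easier) second-order counterpart of the boundary regularity developed in Section 3 — which may be quoted from the work of Jia--Li--Wang \cite{JLW10} or re-derived by running the perturbation scheme of Section 3 for the Laplacian. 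In particular $\frac{\partial F}{\partial n}\in L^\infty(\partial\Omega)$ with the same bound. (If desired one may first assume $f$ smooth and recover the general case by the density argument used for Theorem \ref{thm.MP}.)

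Next I would use that the lemma's $u$ is the biharmonic function with data $(f,0)$ (value $f$, normal derivative $0$), by (\ref{eq.IntRepre}). Since $F$ is harmonic it is biharmonic, with boundary data consisting of the value $f$ and normal derivative $\frac{\partial F}{\partial n}$; hence $u-F$ is the biharmonic function whose boundary value vanishes and whose normal derivative is $-\frac{\partial F}{\partial n}$ (its tangential gradient vanishes on $\partial\Omega$, as $\nabla u$ and $\nabla F$ share the tangential component $\nabla_{\tan}f$ there). Applying the integral representation (\ref{eq.IntRepre}) to $u-F$ — whose $f$-part is now absent — yields
\begin{equation*}
u(x)=F(x)+\int_{\partial\Omega}\Delta_Q G(Q,x)\,\frac{\partial F}{\partial n}(Q)\,d\sigma(Q).
\end{equation*}
(Equivalently, this identity follows from two integrations by parts starting from $u(x)=\int_{\partial\Omega}F\,\frac{\partial}{\partial n}\Delta_Q G(Q,x)\,d\sigma$: use $\Delta_y^2 G(\cdot,x)=0$ off $x$ together with $\int_{\partial B_\rho(x)}\frac{\partial}{\partial n}\Delta_y\Gamma(y,x)\,d\sigma=1$ for the first, and $\Delta_y F=0$ for the second, the interior singularity of $G(\cdot,x)$ producing exactly the term $F(x)$.)

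Finally I would conclude. By Lemma \ref{lem.DeltaG},
\begin{equation*}
\Big|\int_{\partial\Omega}\Delta_Q G(Q,x)\,\tfrac{\partial F}{\partial n}(Q)\,d\sigma(Q)\Big|\le \big\|\tfrac{\partial F}{\partial n}\big\|_{L^\infty(\partial\Omega)}\int_{\partial\Omega}|\Delta_Q G(Q,x)|\,d\sigma(Q)\le C\,\delta(x)\,\|\nabla_{\tan}f\|_{L^\infty(\partial\Omega)},
\end{equation*}
and this term in particular tends to $0$ as $x\to P$, so $u$ extends continuously up to $P$ with $u(P)=F(P)=f(P)$. Since $x\in\Gamma(P)$ we have $|x-P|\le(1+\alpha)\delta(x)$, whence by the Step-1 bound $|F(x)-F(P)|\le\|\nabla F\|_{L^\infty(\overline{\Omega})}|x-P|\le C\delta(x)\|\nabla_{\tan}f\|_{L^\infty(\partial\Omega)}$. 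Adding the two estimates gives $|u(x)-u(P)|\le C\delta(x)\|\nabla_{\tan}f\|_{L^\infty(\partial\Omega)}$.

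The hard part is the input of Step 1: the $L^\infty$ boundary-gradient bound for the harmonic extension. This is precisely the place where quasiconvexity, rather than mere Lipschitz character, is used for this lemma — it rules out the reentrant-corner behaviour that would otherwise make $\nabla F$ unbounded — while the algebraic reduction of Step 2 and the final estimate are routine, requiring only some care with the interior singularity of $G(\cdot,x)$ and with the fact that $\partial\Omega$ is merely $C^1$ once the quasiconvex regularity has been invoked.
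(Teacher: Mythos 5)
Your reduction is clean and the algebra is correct: the $u$ in the lemma is indeed the biharmonic function with data $(f,0)$, so subtracting the harmonic extension $F$ of $f$ and applying the representation formula to $u-F$ (whose Dirichlet trace vanishes) legitimately converts the third-order kernel into $\Delta_Q G(Q,x)\,\frac{\partial F}{\partial n}(Q)$, after which Lemma \ref{lem.DeltaG} finishes the job. The problem is Step 1. The estimate $\norm{\nabla F}_{L^\infty(\Omega)}\le C\norm{\nabla_{\tan}f}_{L^\infty(\partial\Omega)}$ for the harmonic extension is precisely the second-order Agmon--Miranda (gradient) maximum principle, i.e.\ the $L^\infty$ endpoint of the regularity problem for the Laplacian in quasiconvex domains. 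It is not quotable from \cite{JLW10}, which gives global $W^{1,p}$ bounds for finite $p$ but not the pointwise gradient bound; and ``running the perturbation scheme of Section 3 for the Laplacian'' only yields the local reverse H\"{o}lder/boundary $C^{1,\e}$ estimate for harmonic functions vanishing on a boundary portion --- to upgrade that to the global $L^\infty$ gradient bound you would still have to redo the Green's function pointwise estimates of Section 4 and the Poisson-kernel argument of Section 5 for the Laplacian. In other words, your key input is an unproved theorem of essentially the same nature and depth as the one being established (it genuinely fails in general Lipschitz domains for $d\ge 4$ because of reentrant corners, so quasiconvexity is doing real work there), and as written the proof is not complete.

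By contrast, the paper's proof never solves an auxiliary boundary value problem. It works directly with the kernel: after a dyadic partition of unity $\{\varphi_{k\ell}\}$ adapted to $\delta(x)$ and a decomposition into starlike Lipschitz pieces $D_{k\ell}$, it uses the Verchota identity $\psi(y)-\psi(x^{k\ell})=(y-x^{k\ell})\cdot\nabla\Psi(y)$ for the harmonic function $\psi=\Delta_yG(y,x)$ to rewrite $\frac{\partial\psi}{\partial n}$ as tangential derivatives $(n_i\partial_j-n_j\partial_i)$ plus a term $\frac{\partial\Psi}{\partial n}$ of lower order, integrates by parts on $\partial\Omega$ to throw the tangential derivatives onto $(f-f(P))\varphi_{k\ell}$ (producing the factor $\norm{\nabla_{\tan}f}_{L^\infty}$), and then controls the remaining nontangential maximal functions by the same $(R)_2$-plus-coarea argument used for $J_{k\ell}$ in Lemma \ref{lem.DeltaG}. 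This uses only $(R)_2$, which holds in every Lipschitz domain, plus the already-proved estimates on $G$. If you want to salvage your approach, you would need to first prove the gradient maximum principle for harmonic functions in quasiconvex domains as a separate result; that is a reasonable project, but it is not shorter than the lemma you are trying to prove.
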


\begin{proof}
	
	The key idea is to use the integration by parts on the boundary. For this purpose, we need to decompose $\partial\Omega$ into the boundaries of finite starlike Lipschitz subdomains and convert the normal derivative to tangential derivatives.
	
	We construct a partition of unity. Let $\varphi_{k\ell}$ be a sequence of smooth functions, with $k \in \N$ and $1\le \ell\le N $, with $N$ depending only on the dimension $d$. Moreover, $\varphi_{k\ell}$ is supported in $B_{k\ell}:= B_{2^k \delta(x)}(Q_{k\ell})$, where $Q_{k\ell} \in \partial\Omega$, $\txt{dist}(x, 2B_{k\ell} ) \simeq 2^k \delta(x)$,
	\begin{equation*}
	\sum_{k,\ell} \varphi_{k\ell}(Q) = 1, \qquad \txt{for any } Q\in \partial\Omega,
	\end{equation*}
	and
	\begin{equation*}
	|\nabla \varphi_{k\ell}| \le \frac{C}{2^k \delta(x)}.
	\end{equation*}
	
	Now write
	\begin{equation*}
	\begin{aligned}
	&u(x) - u(P) \\
	&\qquad = \sum_{k,\ell} \int_{\partial \Omega \cap B_{k\ell} } \frac{\partial}{\partial n} \Delta_Q G(Q,x) (f(Q) - f(P))\varphi_{k\ell}(Q) d\sigma(Q).
	\end{aligned}
	\end{equation*}
	
	For any Lipschitz domains, there exists some $R_0>0$ (depending only on $\Omega$) so that if $r<R_0$, $B_{r}(Q) \cap \Omega$ is a starlike Lipschitz domain for any $Q\in \partial\Omega$. Without loss of generality, we assume all the subdomains $D_{k\ell}: = B_{k\ell} \cap \Omega$ are starlike. Now fix $k$ and $\ell$, and let $x^{k\ell} $ be a suitable ``center" of the starlike domain $D_{k\ell}$. 
	
	Since $D_{k\ell}$ is a starlike Lipschitz domain with ``center'' $x^{k\ell}$, for any given harmonic function $\psi$ in $D_{k\ell}$, we may define a transformation
	\begin{equation*}
	\Psi(y) = \int_0^1 \psi(t(y-x^{k\ell}) + x^{k\ell}) \frac{dt}{t}.
	\end{equation*}
	Then, one may directly verifies that
	\begin{equation*}
	\psi(y) - \psi(x^{k\ell}) = (y-x^{k\ell})\cdot \nabla \Psi(y).
	\end{equation*}
	
	For any $Q\in \partial(D_{k\ell})$, a direct computation shows that
	\begin{equation*}
	\begin{aligned}
	\frac{\partial \psi(Q)}{\partial n} &= \frac{\partial}{\partial n}((y-x^{k\ell})\cdot \nabla \Psi(y)) \Big|_{y=Q} \\
	& = \sum_{i,j} (n_i \partial_j - n_j \partial_i )((y - x^{k\ell})_j \partial_i \Psi) \Big|_{y=Q} + (1-d) \frac{\partial \Psi}{\partial n}(Q).
	\end{aligned}
	\end{equation*}
	Observe that for any fixed $i$ and $j$, $n_i \partial_j - n_j \partial_i$ is a tangential derivative on the boundary $\partial(D_{k\ell})$. Hence, since $\psi(y) = \Delta_y G(y,x)$ is harmonic in $D_{k\ell}$, we have
	\begin{equation*}
	\begin{aligned}
	&\int_{\Delta_{k\ell} } \frac{\partial}{\partial n} \psi(Q) (f(Q) - f(P))\varphi_{k\ell}(Q) d\sigma \\
	& = \sum_{i,j} \int_{\Delta_{k\ell} } (n_i \partial_j - n_j \partial_i )((Q - x^{k\ell})_j \partial_i \Psi) (f(Q) - f(P))\varphi_{k\ell}(Q) d\sigma \\
	& \qquad + (1-d) \int_{\Delta_{k\ell}} \frac{\partial \Psi}{\partial n} (f(Q) - f(P))\varphi_{k\ell}(Q) d\sigma \\
	& = -\sum_{i,j} \int_{\Delta_{k\ell} } (Q - x^{k\ell})_j \partial_i \Psi (n_i \partial_j - n_j \partial_i )\Big( (f(Q) - f(P))\varphi_{k\ell}(Q) \Big) d\sigma \\
	& \qquad + (1-d) \int_{\Delta_{k\ell}} \frac{\partial \Psi}{\partial n} (f(Q) - f(P))\varphi_{k\ell}(Q) d\sigma.
	\end{aligned}
	\end{equation*}
	where $\Delta_{k\ell} = B_{k\ell}\cap \partial\Omega$.

	Now using the result in \cite{V87}, for $1<p<\infty$,
	\begin{equation*}
	\norm{(\nabla \Psi)^*_{D_{k\ell}}}_{L^p(\partial (D_{k\ell}))} \le C \text{diam}(D_{k\ell}) \norm{(\psi)_{D_{k\ell}}^*}_{L^p(\partial (D_{k\ell}))}.
	\end{equation*}
	It follows
	\begin{equation*}
	\begin{aligned}
	&\bigg| \int_{\Delta_{k\ell} } \frac{\partial}{\partial n} \psi(Q) (f(Q) - f(P))\varphi_{k\ell}(Q) d\sigma \bigg| \\
	& \qquad \le C \norm{\nabla_{\tan} f}_{L^\infty(\partial\Omega)} |D_{k\ell}|^{1/2} \bigg( \int_{\partial (D_{k\ell})} |(\psi)_{D_{k\ell}}^*|^2 d\sigma\bigg)^{1/2},
	\end{aligned}
	\end{equation*}
	where we have used the mean value theorem and the fact $|Q-P| \simeq \text{diam}(D_{k\ell})$ for $Q\in D_{k\ell}$.
	
	Now recall that $\psi(y) = \Delta_y G(y,x)$. Similar as $J_{k\ell}$ in the proof of Lemma \ref{lem.DeltaG}, by the $(R)_2$ regularity and the coarea formula, we may derive
	\begin{equation*}
	|D_{k\ell}|^{1/2} \bigg( \int_{\partial (D_{k\ell})} |(\psi)_{D_{k\ell}}^*|^2 d\sigma\bigg)^{1/2} \le C\frac{\delta(x)}{2^{k\e}}.
	\end{equation*}
	Hence
	\begin{equation*}
	|u(x) - u(P)| \le \sum_{k = 0}^{\infty} \sum_{\ell = 1}^{N} \frac{C\delta(x)}{2^{k\e}} \norm{\nabla_{\tan} f}_{L^\infty(\partial\Omega)} \le C\delta(x)\norm{\nabla_{\tan} f}_{L^\infty(\partial\Omega)},
	\end{equation*}
	which ends the proof.
\end{proof}

Now, Theorem \ref{thm.MP} follows readily from the previous two lemmas.

\begin{proof}[Proof of Theorem \ref{thm.MP}]
	For any $x\in \Omega$ and $x\in \Gamma(P), P\in \partial\Omega$, Lemma \ref{lem.DeltaG} and Lemma \ref{lem.DnDeltaG} implies
	\begin{equation}\label{est.uxxx}
	\begin{aligned}
	|u(x) - u(P)| &\le C\delta(x) \big( \norm{g}_{L^\infty(\partial\Omega)}  + \norm{\nabla_{\tan} f}_{L^\infty(\partial\Omega)}\big) \\
	& \le C\delta(x) \norm{g}_{L^\infty(\partial\Omega)}.
	\end{aligned}
	\end{equation}
	The term $\norm{\nabla_{\tan} f}_{L^\infty(\partial\Omega)}$ is not necessary in the last inequality, due to Remark \ref{rmk.D2Dn}. Now, note that if $x\in \Gamma(P)$, for any $y\in B_{\delta(x)/2}(x)$, $y$ is contained in a larger non-tangential cone $\Gamma'(P) \supset \Gamma(P)$, for which (\ref{est.uxxx}) still holds. Since $u(\cdot) - u(P)$ is also a solution, by the interior estimate, one arrives
	\begin{equation*}
	|\nabla u(x)| \le \frac{C}{\delta(x)} \bigg(\fint_{B_{\delta(x)/2}(x)} |u(y) - u(P)|^2 dy\bigg)^{1/2} \le C\norm{g}_{L^\infty(\partial\Omega)}.
	\end{equation*}
	Since $x\in \Omega$ is arbitrary and $\nabla u = g$ on $\partial\Omega$ in the sense of non-tangential limit, we obtain (\ref{est.MP}) as desired.
\end{proof}

\begin{remark}
	Due to Remark \ref{rmk.d23}, our approach for Theorem \ref{thm.MP} provides an alternative proof of the weak maximum principle in arbitrary Lipschitz domains for $d =2$ or $3$.
\end{remark}

\section{Classical Solutions}
In this section, we will show the existence of the classical solution for (\ref{eq.Dp}) if the boundary value $(f,g)\in \WA^{1,2}(\partial\Omega)$ is continuous. For $0\le \alpha < 1$, let
\begin{equation*}
\CA^{1,\alpha}(\partial\Omega) = \{ (f,g) = (\phi,\nabla \phi)|_{\partial\Omega}: \phi\in C^{1,\alpha}_0(\R^d) \}.
\end{equation*}
We stipulate $C^{1,0} = C^1, \CA^{1,0} = \CA^{1}$ and so forth. For $\alpha\in (0,1)$ and $U \subset \R^d$, define
\begin{equation*}
[F]_{C^{\alpha}(U)} := \sup_{x,y\in U} \frac{|F(x) - F(y)|}{|x-y|^\alpha}, \qquad [F]_{C^{1,\alpha}(U)} := \sup_{x,y\in U} \frac{|\nabla F(x) - \nabla F(y)|}{|x-y|^\alpha}.
\end{equation*}
Throughout this section, we assume $\e\in (0,1)$ and $\delta_0>0$ is given by Corollary \ref{cor.DuCe} (or Theorem \ref{thm.Gxy}). Note that $\delta_0$ depends only on $d,\e$ and the Lipschitz constant.

Let $(f,g)\in \CA^{1,\alpha}(\partial\Omega)$ and $h = n\cdot g$. Consider the equation
\begin{equation}\label{eq.biharmonic}
\left\{
\begin{aligned}
&\Delta^2 u = 0, \quad \txt{in } \Omega, \\
&u = f,\  \frac{\partial}{\partial n}u = h, \quad \txt{on } \partial\Omega.
\end{aligned}
\right.
\end{equation}
In this section, we will show that if $\Omega$ is a quasiconvex domain with $\delta<\delta_0$, $0\le \alpha<\e$ and $(f,g)\in \CA^{1,\alpha}(\partial\Omega)$, then $u\in C^{1,\alpha}(\overline{\Omega})$.

To this end, we fix an arbitrary point $P_0\in \partial\Omega$ and would like to subtract a linear function from $u$ so that the resulting function and its gradient both vanish at $P_0$. We first assume $\alpha>0$. By our definition, $(f,g) = (\phi,\nabla \phi)$ on $\partial\Omega$ for some $\phi\in C^{1,\alpha}_0(\R^d)$. Then, define a linear function
\begin{equation*}
L(x) = \phi(P_0) + \nabla \phi(P_0) \cdot (x-P_0).
\end{equation*}
Now, if we set
\begin{equation*}
\widetilde{u}(x) = u(x) - L(x),
\end{equation*}
then $\widetilde{u}$ satisfies
\begin{equation*}
\left\{
\begin{aligned}
&\Delta^2 \widetilde{u} = 0, \quad \txt{in } \Omega, \\
&\widetilde{u} = \widetilde{f},\  \frac{\partial}{\partial n}u = \widetilde{h}, \quad \txt{on } \partial\Omega,
\end{aligned}
\right.
\end{equation*}
where $\widetilde{f} = \phi-L$ and $\widetilde{h} = n \cdot ( \nabla \phi -\nabla L)$ on $\partial\Omega$. Note that $\widetilde{h}$ is well-defined as the normal $n(Q)$ exists for a.e. $Q\in \partial\Omega$. Using the observations $(\phi-L)(P_0) = 0$ and $\nabla(\phi-L)(P_0) = 0$, we have
\begin{equation*}
|\widetilde{f}(Q)| \le C|Q-P_0|^{1+\alpha} [\phi]_{C^{1,\alpha}(\R^d)},
\end{equation*}
and
\begin{equation}\label{est.ha}
|\widetilde{h}(Q)| \le C|Q-P_0|^\alpha [\phi]_{C^{1,\alpha}(\R^d)}.
\end{equation}

\begin{theorem}\label{thm.tu.C1a}
	Let $d\ge 4$ and $\Omega$ be a bounded $(\delta,\sigma,R)$-quasiconvex Lipschitz domain with $\delta<\delta_0$. Let $\widetilde{u}$ be defined as above. Then, for any $x\in \Gamma(P_0)$
	\begin{equation*}
	|\nabla \widetilde{u}(x)| + \delta(x)^{-1} |\widetilde{u}(x)|  \le C\delta(x)^\alpha [\phi]_{C^{1,\alpha}(\R^d)}.
	\end{equation*}
	where $\alpha\in (0,\e)$ and $C$ depends only on $d, \e, \alpha$ and $\Omega$.
\end{theorem}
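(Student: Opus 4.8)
The plan is to mimic the structure of the proof of Theorem~\ref{thm.MP}: use the integral representation
\begin{equation*}
\widetilde{u}(x) = \int_{\partial\Omega} \frac{\partial}{\partial n}\Delta_Q G(Q,x)\, \widetilde{f}(Q)\, d\sigma(Q) - \int_{\partial\Omega} \Delta_Q G(Q,x)\, \widetilde{h}(Q)\, d\sigma(Q),
\end{equation*}
and estimate the two terms using the pointwise bounds on $\widetilde{f}$ and $\widetilde{h}$ (the ones with $|Q-P_0|^{1+\alpha}$ and $|Q-P_0|^\alpha$, respectively) together with the Green's function estimates of Lemma~\ref{lem.EstG}, Theorem~\ref{thm.Gxy}, and the $(R)_2$-plus-coarea technique already used in Lemmas~\ref{lem.DeltaG} and \ref{lem.DnDeltaG}. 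Once we show $|\widetilde{u}(x)| \le C\delta(x)^{1+\alpha}[\phi]_{C^{1,\alpha}}$ for $x\in\Gamma(P_0)$, the bound on $|\nabla\widetilde{u}(x)|$ follows from the interior estimate $|\nabla\widetilde{u}(x)|\le C\delta(x)^{-1}\sup_{B_{\delta(x)/2}(x)}|\widetilde{u}|$, provided the $L^\infty$ bound is available on a slightly enlarged cone (which it is, as in the proof of Theorem~\ref{thm.MP}).

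For the decomposition I would again use the dyadic partition of unity: balls $B_{k\ell} = B_{2^k\delta(x)}(Q_{k\ell})$ covering $\partial\Omega$ with finite overlap and $\mathrm{dist}(x, 2B_{k\ell})\simeq 2^k\delta(x)$. On $\Delta_{k\ell} = B_{k\ell}\cap\partial\Omega$ we have $|Q - P_0|\lesssim 2^k\delta(x)$ when $x\in\Gamma(P_0)$, so $|\widetilde{f}(Q)|\lesssim (2^k\delta(x))^{1+\alpha}[\phi]_{C^{1,\alpha}}$ and $|\widetilde{h}(Q)|\lesssim (2^k\delta(x))^\alpha[\phi]_{C^{1,\alpha}}$. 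The $\widetilde{h}$-term is handled exactly as $J_{k\ell}$ in Lemma~\ref{lem.DeltaG}: the $(R)_2$ regularity on $D_{k\ell}$ plus the coarea formula, the Caccioppoli inequality, and estimate (\ref{est.DxG.Holder}) give $\int_{\Delta_{k\ell}}|\Delta_Q G(Q,x)|\,d\sigma(Q) \lesssim \delta(x)\,2^{-k\e}$ (this is in fact the local version of Lemma~\ref{lem.DeltaG}), so the contribution is $\lesssim \sum_k (2^k\delta(x))^\alpha \delta(x) 2^{-k\e}[\phi]_{C^{1,\alpha}} \lesssim \delta(x)^{1+\alpha}[\phi]_{C^{1,\alpha}}$ since $\alpha<\e$ makes the geometric series in $k$ converge. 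The $\widetilde{f}$-term is handled as in Lemma~\ref{lem.DnDeltaG}: the subdomains $D_{k\ell}$ are starlike Lipschitz, and we integrate by parts on the boundary converting $\frac{\partial}{\partial n}\Delta_Q G$ into tangential derivatives applied to $\widetilde{f}\varphi_{k\ell}$, using Verchota's estimate from \cite{V87} on the potential $\Psi$ and the same $(R)_2$-coarea bound on $(\psi)^*_{D_{k\ell}}$ with $\psi = \Delta_Q G$. The new feature is that we must also differentiate $\widetilde{f}$: since $[\nabla\widetilde{f}]_{C^\alpha}\le [\phi]_{C^{1,\alpha}}$, on $\Delta_{k\ell}$ the tangential derivative of $\widetilde{f}$ is $O((2^k\delta(x))^\alpha[\phi]_{C^{1,\alpha}})$ (using $\nabla\widetilde{f}(P_0)=0$), while the term where the derivative hits $\varphi_{k\ell}$ gains a factor $(2^k\delta(x))^{-1}$ but costs a factor $|\widetilde{f}|\lesssim (2^k\delta(x))^{1+\alpha}$, so both contributions are again $O((2^k\delta(x))^\alpha[\phi]_{C^{1,\alpha}})$, and summing against $\delta(x)2^{-k\e}$ gives the claimed $\delta(x)^{1+\alpha}[\phi]_{C^{1,\alpha}}$.

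The main obstacle is the bookkeeping in the $\widetilde{f}$-term: one must check that when the tangential derivative lands on $\varphi_{k\ell}$ (rather than on $\widetilde{f}$), the loss of $(2^k\delta(x))^{-1}$ is exactly compensated by the extra vanishing order $|Q-P_0|^{1+\alpha}$ of $\widetilde{f}$ at $P_0$ — this is why the cutoff radius is tied to $\delta(x)$ rather than being absolute. A secondary point is the endpoint $\alpha=0$: there $\widetilde{f},\widetilde{h}$ are only continuous, not Hölder, so one cannot subtract a single linear function and get a quantitative decay rate; instead one argues by a density/approximation argument — approximate the continuous datum $(f,g)$ in $\CA^1$ by $\CA^{1,\alpha}$ data, apply the $\alpha>0$ case with the uniform-in-$\alpha$ constant structure (or use the $L^\infty$ bound of Theorem~\ref{thm.MP} together with a modulus-of-continuity argument) to conclude $u\in C^1(\overline{\Omega})$. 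I would state and prove the $\alpha>0$ case first and then remark on the modifications for $\alpha=0$.
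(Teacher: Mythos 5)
Your proposal is correct and follows essentially the same route as the paper: the paper proves exactly your two intermediate bounds as Lemma \ref{lem.DG.alpha} (the weighted version of Lemma \ref{lem.DeltaG}, giving $\int_{\partial\Omega}|\Delta_Q G(Q,x)||x-Q|^\alpha d\sigma \le C\delta(x)^{1+\alpha}$) and Lemma \ref{lem.DnDG.alpha} (the $\widetilde f$-term via boundary integration by parts, using precisely the bound $|\nabla\widetilde f(Q)|+|Q-P_0|^{-1}|\widetilde f(Q)|\le C|Q-P_0|^\alpha[\phi]_{C^{1,\alpha}}$ to absorb both the derivative on $\widetilde f$ and the derivative on $\varphi_{k\ell}$), sums the geometric series using $\alpha<\e$, and then recovers $|\nabla\widetilde u|$ from the interior estimate on a slightly enlarged cone. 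Your remarks on the $\alpha=0$ endpoint match the paper's treatment in the subsequent corollary, though that case is outside the statement of this theorem.
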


Recall the integral representation for the solution $\widetilde{u}$
\begin{equation}\label{eq.tu.int}
\widetilde{u}(x) = \int_{\partial \Omega} \frac{\partial}{\partial n} \Delta_Q G(Q,x) \widetilde{f}(Q) d\sigma(Q) + \int_{\partial \Omega} \Delta_Q G(Q,x) \widetilde{h}(Q) d\sigma(Q).
\end{equation}
The following are improved estimates of Lemma \ref{lem.DeltaG} and Lemma \ref{lem.DnDeltaG}.
\begin{lemma}\label{lem.DG.alpha}
	Let $d\ge 4$ and $\Omega$ be a bounded $(\delta,\sigma,R)$-quasiconvex Lipschitz domain with $\delta<\delta_0$. Then
	\begin{equation*}
	\int_{\partial\Omega} |\Delta_Q G(Q,x)|  |x-Q|^{\alpha}d\sigma(Q) \le C\delta(x)^{1+\alpha},
	\end{equation*}
	where $\alpha\in (0,\e)$ and $C$ depends only on $d, \e, \alpha$ and $\Omega$.
\end{lemma}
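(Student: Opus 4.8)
The plan is to rerun the dyadic argument of Lemma~\ref{lem.DeltaG} with the weight $|x-Q|^{\alpha}$ inserted, and to observe that the summability margin already present there absorbs the cost of the weight precisely when $\alpha<\e$. I would keep the same covering $\{B_{k\ell}=B_{2^k\delta(x)}(Q_{k\ell}):1\le\ell\le N,\ k\ge0\}$ of $\partial\Omega$ with finite overlaps and $\text{dist}(x,4B_{k\ell})\simeq2^k\delta(x)$, writing $D_{k\ell}=B_{k\ell}\cap\Omega$, $\Delta_{k\ell}=B_{k\ell}\cap\partial\Omega$, and $J_{k\ell}=\int_{\Delta_{k\ell}}|\Delta_QG(Q,x)|\,d\sigma(Q)$. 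Since $|x-Q|\simeq2^k\delta(x)$ for $Q\in\Delta_{k\ell}$,
$$\int_{\partial\Omega}|\Delta_QG(Q,x)|\,|x-Q|^{\alpha}\,d\sigma(Q)\le C\sum_{k=0}^{\infty}\sum_{\ell=1}^{N}(2^k\delta(x))^{\alpha}J_{k\ell}.$$

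For $J_{k\ell}$ I would invoke the estimate already established inside the proof of Lemma~\ref{lem.DeltaG}, namely $(J_{k\ell})^2\le C\delta(x)^2\,2^{-2k\e}$, hence $J_{k\ell}\le C\delta(x)\,2^{-k\e}$. (Recall this came from the $(R)_2$ regularity in the Lipschitz domain $D_{k\ell}$, which reduces $J_{k\ell}$ to an $L^2$ norm of $\nabla^2 G(\cdot,x)$ on $\partial B_{k\ell}\cap\Omega$, the $\Delta_{k\ell}$-piece of the tangential term vanishing because $G$ and $\nabla G$ vanish on $\partial\Omega$; then the coarea formula over $t\in[1,2]$ passes to a solid integral of $|\nabla^2 G(\cdot,x)|^2$, the Caccioppoli inequality~(\ref{est.Caccioppoli}) passes to $|\nabla G(\cdot,x)|^2$, and the Green's function bound~(\ref{est.DxG.Holder}) closes the estimate.) Substituting gives
$$\int_{\partial\Omega}|\Delta_QG(Q,x)|\,|x-Q|^{\alpha}\,d\sigma(Q)\le C\delta(x)^{1+\alpha}\sum_{k=0}^{\infty}N\,2^{k\alpha}2^{-k\e}=C\delta(x)^{1+\alpha}\sum_{k=0}^{\infty}N\,2^{-k(\e-\alpha)},$$
and since $\alpha<\e$ the geometric series converges to a constant depending only on $d,\e,\alpha$ and $\Omega$, which is the claim.

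There is no genuinely new difficulty here: the weight $|x-Q|^{\alpha}$ costs a factor $2^{k\alpha}$ on the $k$-th dyadic shell, while Lemma~\ref{lem.DeltaG} already produces decay $2^{-k\e}$ per shell, so the only place the hypothesis $\alpha<\e$ enters is to keep the resulting series summable. The substantive work — the interplay of $(R)_2$ regularity, the coarea reduction, the Caccioppoli inequality, and the sharp Green's function estimates of Theorem~\ref{thm.Gxy} — has already been carried out in Lemma~\ref{lem.DeltaG}, so a fully written proof would either cite that computation or reproduce it line by line with the extra weight trailing along.
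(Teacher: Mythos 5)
Your proposal is correct and follows essentially the same route as the paper: the paper defines $\widetilde{J}_{k\ell}=\int_{\Delta_{k\ell}}|\Delta_QG(Q,x)|\,|x-Q|^{\alpha}\,d\sigma(Q)$ and reruns the Lemma~\ref{lem.DeltaG} computation with the extra factor $(2^k\delta(x))^{2\alpha}$ carried inside, arriving at $(\widetilde{J}_{k\ell})^2\le C\delta(x)^{2+2\alpha}2^{-2k(\e-\alpha)}$, which is exactly your bound $\widetilde{J}_{k\ell}\le C(2^k\delta(x))^{\alpha}J_{k\ell}$ with $J_{k\ell}\le C\delta(x)2^{-k\e}$. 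The summation over $k$ using $\alpha<\e$ is identical.
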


\begin{proof}
	For each $x\in \Omega$, let $D_{k\ell}$ and $\Delta_{k\ell}$ be the same as Lemma \ref{lem.DeltaG}. Observe that if $Q\in \Delta_{k\ell}$ and $y\in D_{k\ell}$,
	\begin{equation*}
	|x-Q| \simeq |x-y| \simeq 2^k \delta(x).
	\end{equation*}
	Let
	\begin{equation*}
	\widetilde{J}_{k\ell} = \int_{\Delta_{k\ell}} |\Delta_Q G(Q,x)|  |x-Q|^{\alpha}d\sigma(Q).
	\end{equation*}
	By the similar argument as Lemma \ref{lem.DeltaG}, we have
	\begin{equation*}
	\begin{aligned}
	(\widetilde{J}_{k\ell})^2 &\le \frac{C|\Delta_{k\ell}|}{2^{3k} \delta(x)^3} \int_{D_{k\ell}^4} \Big( \frac{C\delta(y)^\e \delta(x)^{1+\e} }{|x-y|^{d-2+2\e}} \Big)^2 (2^k\delta(x))^{2\alpha} dy \\
	& \le C\frac{\delta(x)^{2+2\alpha}  }{2^{2k(\e-\alpha)}}.
	\end{aligned}
	\end{equation*}
	Now if $\alpha<\e$, we may obtain the desired estimate by taking square root and summing over $k$ and $\ell$.
\end{proof}

\begin{lemma}\label{lem.DnDG.alpha}
	Let $d\ge 4$ and $\Omega$ be a bounded $(\delta,\sigma,R)$-quasiconvex Lipschitz domain with $\delta<\delta_0$. Let
	\begin{equation*}
	u(x) = \int_{\partial \Omega} \frac{\partial}{\partial n} \Delta_Q G(Q,x) \widetilde{f}(Q) d\sigma(Q).
	\end{equation*}
	Then, if $x\in \Gamma(P_0)$,
	\begin{equation*}
	|u(x)| \le C\delta(x)^{1+\alpha} [\phi]_{C^{1,\alpha}(\R^d)},
	\end{equation*}
	where $\alpha\in (0,\e)$ and $C$ depends only on $d, \e, \alpha$ and $\Omega$.
\end{lemma}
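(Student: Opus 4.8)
The plan is to follow the proof of Lemma~\ref{lem.DnDeltaG} almost line by line, replacing the difference $f(Q)-f(P)$ there by $\widetilde{f}(Q)$ here and exploiting the improved pointwise bounds
\[
|\widetilde{f}(Q)|\le C|Q-P_0|^{1+\alpha}[\phi]_{C^{1,\alpha}(\R^d)},\qquad
|\nabla_{\tan}\widetilde{f}(Q)|\le C|Q-P_0|^{\alpha}[\phi]_{C^{1,\alpha}(\R^d)},
\]
which hold because $\widetilde{f}(P_0)=0$ and $\nabla\widetilde{f}(P_0)=0$. First I would reproduce the setup of Lemma~\ref{lem.DnDeltaG}: a partition of unity $\{\varphi_{k\ell}\}$ with $\varphi_{k\ell}$ supported in $B_{k\ell}:=B_{2^k\delta(x)}(Q_{k\ell})$, $Q_{k\ell}\in\partial\Omega$, $\txt{dist}(x,2B_{k\ell})\simeq 2^k\delta(x)$, $|\nabla\varphi_{k\ell}|\le C(2^k\delta(x))^{-1}$, with the subdomains $D_{k\ell}=B_{k\ell}\cap\Omega$ taken to be starlike Lipschitz domains, and write $u(x)=\sum_{k,\ell}\int_{\Delta_{k\ell}}\frac{\partial}{\partial n}\Delta_Q G(Q,x)\,\widetilde{f}(Q)\,\varphi_{k\ell}(Q)\,d\sigma$. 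Since $x\in\Gamma(P_0)$ we have $|x-P_0|\simeq\delta(x)$, hence $|Q-P_0|\le C2^k\delta(x)$ for every $Q\in B_{k\ell}$; this turns the two bounds above into $|\widetilde{f}|\le C(2^k\delta(x))^{1+\alpha}[\phi]_{C^{1,\alpha}(\R^d)}$ and $|\nabla_{\tan}\widetilde{f}|\le C(2^k\delta(x))^{\alpha}[\phi]_{C^{1,\alpha}(\R^d)}$ on $B_{k\ell}$.

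Next, on each starlike piece I would run the computation of Lemma~\ref{lem.DnDeltaG} with $\psi(y)=\Delta_y G(y,x)$, harmonic in $D_{k\ell}$: using the radial transform $\Psi$ and the identity $\frac{\partial\psi}{\partial n}=\sum_{i,j}(n_i\partial_j-n_j\partial_i)\bigl((y-x^{k\ell})_j\partial_i\Psi\bigr)+(1-d)\frac{\partial\Psi}{\partial n}$, integrate by parts on $\Delta_{k\ell}=B_{k\ell}\cap\partial\Omega$ to move the tangential operators $n_i\partial_j-n_j\partial_i$ onto $\widetilde{f}\varphi_{k\ell}$. Every resulting term carries a factor that is either $|\nabla_{\tan}\widetilde{f}|$ or $|\nabla\varphi_{k\ell}|\,|\widetilde{f}|$ (or $|\widetilde{f}|$ paired with $(2^k\delta(x))^{-1}$-type scaling in the $(1-d)\frac{\partial\Psi}{\partial n}$ term), and by the two bounds above each of these is $\le C(2^k\delta(x))^{\alpha}[\phi]_{C^{1,\alpha}(\R^d)}$ — that is, precisely a factor $(2^k\delta(x))^{\alpha}$ larger than the corresponding factor $\|\nabla_{\tan}f\|_{L^\infty}$ appearing in Lemma~\ref{lem.DnDeltaG}. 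Combining this with Verchota's transform estimate $\|(\nabla\Psi)_{D_{k\ell}}^*\|_{L^2(\partial D_{k\ell})}\le C\,\txt{diam}(D_{k\ell})\,\|(\psi)_{D_{k\ell}}^*\|_{L^2(\partial D_{k\ell})}$ \cite{V87} gives, exactly as in Lemma~\ref{lem.DnDeltaG},
\begin{equation*}
\Bigl|\int_{\Delta_{k\ell}}\frac{\partial}{\partial n}\psi\,\widetilde{f}\,\varphi_{k\ell}\,d\sigma\Bigr|
\le C(2^k\delta(x))^{\alpha}[\phi]_{C^{1,\alpha}(\R^d)}\,|D_{k\ell}|^{1/2}\Bigl(\int_{\partial D_{k\ell}}|(\psi)_{D_{k\ell}}^*|^2\,d\sigma\Bigr)^{1/2}.
\end{equation*}

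Finally, exactly as for $J_{k\ell}$ in the proof of Lemma~\ref{lem.DeltaG} (invoked already at the end of Lemma~\ref{lem.DnDeltaG}), I would bound the remaining surface factor by chaining the $(R)_2$ regularity on $D_{k\ell}^t$ for a good $t\in[1,2]$, the coarea formula over $t\in[1,2]$, the Caccioppoli inequality, and the Green's function bound (\ref{est.DxG.Holder}) — using that $G(\cdot,x)$ and $\nabla G(\cdot,x)$ vanish on $\Delta_{k\ell}$ — to obtain $|D_{k\ell}|^{1/2}\bigl(\int_{\partial D_{k\ell}}|(\psi)_{D_{k\ell}}^*|^2\,d\sigma\bigr)^{1/2}\le C\delta(x)2^{-k\e}$. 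Hence each piece is $\le C\delta(x)^{1+\alpha}2^{-k(\e-\alpha)}[\phi]_{C^{1,\alpha}(\R^d)}$; summing over the finitely many $\ell$ and over $k\ge0$ yields $|u(x)|\le C\delta(x)^{1+\alpha}[\phi]_{C^{1,\alpha}(\R^d)}$. The one genuinely new point — and the only real obstacle — is that this geometric series converges precisely because $\alpha<\e$: the extra growth $2^{k\alpha}$ forced by the mere $C^{\alpha}$-regularity of $\nabla\widetilde{f}$ must be absorbed by the decay $2^{-k\e}$ coming from the H\"older-type Green's function estimate of Theorem~\ref{thm.Gxy}, and it is this competition that pins down the range $\alpha\in(0,\e)$ in the statement; the borderline case $\alpha=0$ is just a rerun of Lemma~\ref{lem.DnDeltaG}.
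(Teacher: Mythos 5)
Your proposal is correct and follows essentially the same route as the paper: the paper's proof likewise reruns Lemma \ref{lem.DnDeltaG} with the combined bound $|\nabla \widetilde{f}(Q)|+|Q-P_0|^{-1}|\widetilde{f}(Q)|\le C|Q-P_0|^{\alpha}[\phi]_{C^{1,\alpha}(\R^d)}$, the observation $|Q-P_0|\le C2^{k}\delta(x)$ on $\Delta_{k\ell}$, and the surface-factor bound $C\delta(x)2^{-k\e}$, summing the geometric series under $\alpha<\e$. You have also correctly identified the only genuinely new point, namely that the extra growth $2^{k\alpha}$ must be absorbed by the decay $2^{-k\e}$.
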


\begin{proof}
	By our construction $\widetilde{f} = \phi - L$ (defined in the entire $\R^d$), and the facts $(\phi-L)(P_0) = 0$ and $\nabla(\phi-L)(P_0) = 0$, we know
	\begin{equation}\label{est.tf}
	|\nabla \widetilde{f}(Q) |+ |Q-P_0|^{-1} |\widetilde{f}(Q)|  \le C|Q-P_0|^{\alpha} [\phi]_{C^{1,\alpha}(\R^d)}.
	\end{equation}
	For each $x\in \Gamma(P_0)$, let $D_{k\ell}$, $\Delta_{k\ell}$ and $\varphi_{k\ell}$ be the same as Lemma \ref{lem.DnDeltaG}. Note that for $Q\in \Delta_{k\ell}$, $|Q-P_0| \le C 2^{k} \delta(x)$. Therefore,
	by (\ref{est.tf}) and the same argument as Lemma \ref{lem.DnDeltaG}, one has
	\begin{equation*}
	\begin{aligned}
	&\bigg| \int_{\Delta_{k\ell} } \frac{\partial}{\partial n} \psi(Q) \widetilde{f}(Q) \varphi_{k\ell}(Q) d\sigma \bigg| \\
	& \qquad \le C (2^k \delta(x))^\alpha [\phi]_{C^{1,\alpha}(\R^d)} |D_{k\ell}|^{1/2} \bigg( \int_{\partial (D_{k\ell})} |(\psi)_{D_{k\ell}}^*|^2 d\sigma\bigg)^{1/2} \\
	& \qquad \le C\frac{\delta(x)^{1+\alpha}}{2^{k(\e-\alpha)}},
	\end{aligned}
	\end{equation*}
	where $\psi(Q) = \Delta_Q G(Q,x)$. This implies the desired estimate for $u(x)$ by summing over $k$ and $\ell$, provided $\alpha<\e$.
\end{proof}

\begin{proof}[Proof of Theorem \ref{thm.tu.C1a}]
	The estimate of $|\widetilde{u}(x)|$ follows readily from the representation formula (\ref{eq.tu.int}), Lemma \ref{lem.DG.alpha}, Lemma \ref{lem.DnDG.alpha}, and (\ref{est.ha}). Now, note that if $x\in \Gamma(P_0)$, then $B_{\delta(x)/4}(x)$ is contained in a larger non-tangential cone $\Gamma'(P_0)$, in which the estimate for $\widetilde{u}$ still holds. Then, the interior estimate gives
	\begin{equation*}
	|\nabla \widetilde{u}(x)| \le C\delta(x)^{-1} \sup_{y\in B_{\delta(x)/4} (x)} |\widetilde{u}(y)| \le C\delta(x)^\alpha [\phi]_{C^{1,\alpha}(\R^d)}.
	\end{equation*}
	This completes the proof.
\end{proof}

The following is the main result of this section.
\begin{theorem}\label{thm.C1a}
	Let $d\ge 4$ and $\Omega$ be a bounded $(\delta,\sigma,R)$-quasiconvex Lipschitz domain with $\delta<\delta_0$. Let $u$ be the solution of (\ref{eq.biharmonic}). If $0<\alpha<\e$ and $ (f,g) = (\phi,\nabla\phi)\in \CA^{1,\alpha}(\partial\Omega)$ with $\phi\in C^{1,\alpha}(\R^d)$, then $u\in C^{1,\alpha}(\overline{\Omega})$. Moreover,
	\begin{equation}\label{est.C1a.phi}
	[\nabla u]_{C^\alpha(\Omega)} \le C[\phi]_{C^{1,\alpha}(\R^d)},
	\end{equation}
	where $C$ depends only on $d,\alpha$ and $\Omega$.
\end{theorem}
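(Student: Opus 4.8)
The plan is to derive Theorem \ref{thm.C1a} from Theorem \ref{thm.tu.C1a} by a Campanato-type patching argument: Theorem \ref{thm.tu.C1a} already provides, at every boundary point, the decay of the difference between $\nabla u$ and its boundary value, and it remains only to combine this with the interior estimates for biharmonic functions to produce a uniform $C^\alpha$ modulus for $\nabla u$ on all of $\Omega$.

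First I would record a pointwise Hessian bound valid for every $z\in\Omega$, namely $|\nabla^2 u(z)|\le C\delta(z)^{\alpha-1}[\phi]_{C^{1,\alpha}(\R^d)}$. Let $P_z\in\partial\Omega$ be a nearest point to $z$, so that $|z-P_z|=\delta(z)$ and hence $z\in\Gamma(P_z)$ trivially; set $L_z(y)=\phi(P_z)+\nabla\phi(P_z)\cdot(y-P_z)$ and $\widetilde u=u-L_z$, which is biharmonic in $\Omega$ with $\nabla^2\widetilde u=\nabla^2u$. There is $c\in(0,1)$, depending only on the Lipschitz constant, so that $B_{c\delta(z)}(z)\subset\Omega$ and every point of this ball lies in a fixed enlargement $\Gamma'(P_z)$ of the non-tangential cone and has distance to $\partial\Omega$ comparable to $\delta(z)$; since Theorem \ref{thm.tu.C1a} remains valid on such an enlargement (as is used in its own proof), it gives $\|\widetilde u\|_{L^\infty(B_{c\delta(z)}(z))}\le C\delta(z)^{1+\alpha}[\phi]_{C^{1,\alpha}(\R^d)}$, and the interior estimate $\|\nabla^2\widetilde u\|_{L^\infty(B_{c\delta(z)/2}(z))}\le C\delta(z)^{-2}\|\widetilde u\|_{L^\infty(B_{c\delta(z)}(z))}$ yields the claim. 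The same application of Theorem \ref{thm.tu.C1a} gives, for every $z\in\Omega$, the boundary-layer estimate $|\nabla u(z)-\nabla\phi(P_z)|=|\nabla\widetilde u(z)|\le C\delta(z)^{\alpha}[\phi]_{C^{1,\alpha}(\R^d)}$.

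Next I would run the patching argument. Fix $x,y\in\Omega$, put $s=|x-y|$, and distinguish two cases. If $s<\tfrac{1}{16}\max\{\delta(x),\delta(y)\}$, say $s<\tfrac{1}{16}\delta(x)$, then the segment $[x,y]$ lies in $B_{\delta(x)/16}(x)$, along which $\delta(\cdot)\ge\tfrac{1}{2}\delta(x)$, so integrating the pointwise Hessian bound gives $|\nabla u(x)-\nabla u(y)|\le Cs\,\delta(x)^{\alpha-1}[\phi]_{C^{1,\alpha}(\R^d)}\le Cs^{\alpha}[\phi]_{C^{1,\alpha}(\R^d)}$, using $s\le\delta(x)$ and $1-\alpha>0$. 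Otherwise $\delta(x)\le 16s$ and $\delta(y)\le 16s$; choosing nearest boundary points $P_x,P_y$ and inserting the intermediate values $\nabla\phi(P_x),\nabla\phi(P_y)$, the boundary-layer estimate controls $|\nabla u(x)-\nabla\phi(P_x)|$ and $|\nabla u(y)-\nabla\phi(P_y)|$ each by $Cs^\alpha[\phi]_{C^{1,\alpha}(\R^d)}$, while $|\nabla\phi(P_x)-\nabla\phi(P_y)|\le[\phi]_{C^{1,\alpha}(\R^d)}|P_x-P_y|^{\alpha}\le[\phi]_{C^{1,\alpha}(\R^d)}(33s)^{\alpha}$ since $|P_x-P_y|\le\delta(x)+s+\delta(y)$. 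In both cases $|\nabla u(x)-\nabla u(y)|\le Cs^{\alpha}[\phi]_{C^{1,\alpha}(\R^d)}$, which is (\ref{est.C1a.phi}).

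Finally, (\ref{est.C1a.phi}) makes $\nabla u$ uniformly continuous on $\Omega$, hence it extends continuously to $\overline\Omega$; the non-tangential limit relation contained in Theorem \ref{thm.tu.C1a} identifies this extension with $g=\nabla\phi$ on $\partial\Omega$, and then $u$ extends to a function in $C^{1,\alpha}(\overline\Omega)$ with $u|_{\partial\Omega}=f=\phi$. (The borderline case $\alpha=0$ mentioned in the preamble of this section follows the same scheme, using the $\alpha=0$ version of Theorem \ref{thm.tu.C1a} or a limiting argument.) I do not expect a genuine obstacle at this stage: essentially all the difficulty has been front-loaded into Theorem \ref{thm.tu.C1a} and, behind it, the reverse H\"older estimate of Section 3 and the Green's function bounds of Section 4. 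The only delicate point is the elementary Lipschitz-domain geometry underlying the two steps above --- that a nearest-point projection places a point in its own non-tangential cone, and that concentric balls of radius comparable to $\delta(\cdot)$ remain inside a fixed enlargement of that cone with comparable distance to the boundary.
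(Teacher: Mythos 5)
Your proposal is correct and follows essentially the same route as the paper: both arguments rest entirely on Theorem \ref{thm.tu.C1a}, split into the same two cases according to whether $|x-y|$ is small or large relative to $\delta(x)$, and handle the far case by the triangle inequality through the boundary values $\nabla\phi(P_x),\nabla\phi(P_y)$. Your near case differs only cosmetically: you integrate a pointwise Hessian bound $|\nabla^2 u(z)|\le C\delta(z)^{\alpha-1}[\phi]_{C^{1,\alpha}(\R^d)}$ along the segment, whereas the paper applies the interior $C^{1,\alpha}$ oscillation estimate to $\nabla\widetilde u$ on $B_{\delta(x)/2}(x)$ directly; both are the same interior regularity for biharmonic functions combined with the bound on $\widetilde u$ from Theorem \ref{thm.tu.C1a}.
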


\begin{proof}
	We first show that $\nabla u$ is $C^\alpha$-H\"{o}lder continuous on the boundary along the non-tangential directions. Let $\widetilde{u}$ be defined as before with some $P_0\in \partial\Omega$. Note that $\nabla \widetilde{u}(P_0) = \nabla u(P_0) - \nabla L(P_0) = 0$ and that $\nabla L(x)$ is constant. Thus,
	\begin{equation*}
	|\nabla \widetilde{u}(x)| = |\nabla u(x) - \nabla L(x) - \nabla u(P_0) + \nabla L(P_0)| = |\nabla u(x)  - \nabla u(P_0)|.
	\end{equation*}
	Hence, Theorem \ref{thm.tu.C1a} implies
	\begin{equation}\label{est.C1a.nontan}
	|\nabla u(x) - \nabla u(P_0)| \le C\delta(x)^\alpha [\phi]_{C^{1,\alpha}(\R^d)}.
	\end{equation}
	Apparently, (\ref{est.C1a.nontan}) holds for any $P_0\in \partial\Omega$ and $x\in \Gamma(P_0)$.
	
	On the other hand, by our assumption on $\phi$, $\nabla u(Q) = \nabla \phi(Q)$ is obviously $C^\alpha$-H\"{o}lder continuous for $Q\in \partial\Omega$, i.e., for any $P,Q\in \partial\Omega$
	\begin{equation}\label{est.bdry.C1a}
	|\nabla u(P) - \nabla u(Q)| \le C|P-Q|^\alpha [\phi]_{C^{1,\alpha}(\R^d)}.
	\end{equation}
	
	Now, consider any two points $x,y\in \Omega$ and $x\in \Gamma(P)$ and $y\in \Gamma(Q)$. Without loss of generality, we assume $\delta(x) \ge \delta(y)$. It suffices to estimate $|\nabla u(x) - \nabla u(y)|$, which will be done in two cases as follows. 
	
	Case 1: $|x-y| \le \delta(x)/4$. Let $\widetilde{u}$ be constructed as previously with respect to the point $P$ (instead of $P_0$). Since $x\in \Gamma(P)$, $B_{\delta(x)/2}(x)$ is contained in a larger non-tangential cone $\Gamma'(P)$ for which Theorem \ref{thm.tu.C1a} still holds. Thus, using the interior $C^{1,\alpha}$ estimate for $\widetilde{u}$ and Theorem \ref{thm.tu.C1a}, we obtain
	\begin{equation}\label{est.tuxy}
	\begin{aligned}
	|\nabla \widetilde{u}(x) - \nabla \widetilde{u}(y) | &\le C\bigg( \frac{|x-y|}{\delta(x)} \bigg)^{\alpha} \bigg( \fint_{B_{\delta(x)/2} (x) }  |\nabla \widetilde{u}|^2 \bigg)^{1/2} \\
	& \le C|x-y|^\alpha [\phi]_{C^{1,\alpha}(\R^d)}.
	\end{aligned}
	\end{equation}
	Recall that $\widetilde{u} = u - L$ and $\nabla L$ is constant. Thus, (\ref{est.tuxy}) gives
	\begin{equation*}
	|\nabla u(x) - \nabla u(y)|  \le C|x-y|^\alpha [\phi]_{C^{1,\alpha}(\R^d)}.
	\end{equation*}
	
	Case 2: $|x-y| \ge \delta(x)/4$. In this case,
	\begin{equation*}
	\begin{aligned}
	|P-Q| &\le |P-x| + |x-y| + |y-Q| \\
	&\le (1+\beta) \delta(x) + |x-y| + (1+\beta)\delta(y) \\
	&\le (8(1+\beta)+1) |x-y|.
	\end{aligned}
	\end{equation*}
	It follows from (\ref{est.C1a.nontan}) and (\ref{est.bdry.C1a}) that
	\begin{equation*}
	\begin{aligned}
	|\nabla u(x) - \nabla u(y)| &\le |\nabla u(x) - \nabla u(P)| + |\nabla u(P) - \nabla u(Q)| + |\nabla u(Q) - \nabla u(y)| \\
	& \le C\delta(x)^\alpha [\phi]_{C^{1,\alpha}(\R^d)} + C|P-Q|^\alpha [\phi]_{C^{1,\alpha}(\R^d)} + C\delta(y)^\alpha [\phi]_{C^{1,\alpha}(\R^d)} \\
	& \le C|x-y|^\alpha [\phi]_{C^{1,\alpha}(\R^d)},
	\end{aligned}
	\end{equation*}
	The proof hence is complete.
\end{proof}

As a corollary, we can deal with the endpoint case $\alpha=0$ by the weak maximum principle and an approximation argument.

\begin{corollary}
	Let $d\ge 4$ and $\Omega$ be a bounded $(\delta,\sigma,R)$-quasiconvex Lipschitz domain with $\delta<\delta_0$. If $u$ is the solution of (\ref{eq.biharmonic}) with $ (f,g) \in \CA^{1}(\partial\Omega)$, then $u\in C^1(\overline{\Omega})$. Moreover, for any $Q\in \partial\Omega$,
	\begin{equation*}
	\lim_{\Omega \ni x\to Q } u(x) = f(Q), \quad \text{and} \quad \lim_{\Omega \ni x\to Q } \nabla u(x) = g(Q).
	\end{equation*}
\end{corollary}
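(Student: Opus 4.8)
The plan is to deduce the endpoint case $\alpha=0$ from Theorem \ref{thm.C1a} by a density argument, with the weak maximum principle (Theorem \ref{thm.MP}) supplying the crucial uniform bound. Write $(f,g)=(\phi,\nabla\phi)|_{\partial\Omega}$ with $\phi\in C^1_0(\R^d)$, and set $\phi_j=\phi*\rho_{1/j}$ for a standard mollifier $\rho$. Then $\phi_j\in C_0^\infty(\R^d)\subset C^{1,\alpha}_0(\R^d)$ for every $\alpha\in(0,1)$, and since $\phi$ and $\nabla\phi$ are continuous with compact support (hence uniformly continuous), $\phi_j\to\phi$ and $\nabla\phi_j\to\nabla\phi$ uniformly on $\R^d$. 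In particular $(f_j,g_j):=(\phi_j,\nabla\phi_j)|_{\partial\Omega}\to(f,g)$ in $\WA^{1,2}(\partial\Omega)$, so $(f,g)\in\WA^{1,2}(\partial\Omega)$ and the solution $u$ of (\ref{eq.biharmonic}) is the unique $(D)_2$-solution with this data. Let $u_j$ denote the solution of (\ref{eq.biharmonic}) with data $(f_j,g_j)$.

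Next I would fix any $\alpha\in(0,\e)$ and apply Theorem \ref{thm.C1a} to each $u_j$: this gives $u_j\in C^{1,\alpha}(\overline{\Omega})$, so $u_j$ and $\nabla u_j$ extend continuously to $\overline{\Omega}$ with boundary values $f_j$ and $g_j$ (the continuous extensions agree with the non-tangential limits a.e., hence everywhere). Applying the weak maximum principle of Theorem \ref{thm.MP} to the biharmonic function $u_j-u_k$ yields
\begin{equation*}
\norm{\nabla(u_j-u_k)}_{L^\infty(\Omega)}\le C\norm{g_j-g_k}_{L^\infty(\partial\Omega)}\le C\norm{\nabla\phi_j-\nabla\phi_k}_{L^\infty(\R^d)}\to 0.
\end{equation*}
Since $\Omega$ is a bounded Lipschitz domain its geodesic diameter is finite, so integrating $\nabla(u_j-u_k)$ along paths in $\overline{\Omega}$ and combining with $\norm{u_j-u_k}_{L^\infty(\partial\Omega)}\le\norm{\phi_j-\phi_k}_{L^\infty(\R^d)}\to 0$ shows that $\{u_j\}$ is also Cauchy in $C(\overline{\Omega})$. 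Hence $u_j\to u_*$ and $\nabla u_j\to\nabla u_*$ uniformly on $\overline{\Omega}$ for some $u_*\in C^1(\overline{\Omega})$, with $u_*=f$ and $\nabla u_*=g$ on $\partial\Omega$.

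It then remains to identify $u_*$ with $u$. Uniform convergence gives $\Delta^2u_*=0$ in $\mathcal{D}'(\Omega)$, so $u_*$ is biharmonic in $\Omega$ by hypoellipticity of $\Delta^2$; being continuous on $\overline{\Omega}$ with $u_*=f$, $\nabla u_*=g$ on $\partial\Omega$ and $(\nabla u_*)^*\in L^2(\partial\Omega)$, it attains $f$ and $g$ as non-tangential limits a.e. on $\partial\Omega$, so the uniqueness part of the $(D)_2$-solvability in Lipschitz domains \cite{DKV86} forces $u_*=u$. Therefore $u\in C^1(\overline{\Omega})$ and $\lim_{\Omega\ni x\to Q}u(x)=f(Q)$, $\lim_{\Omega\ni x\to Q}\nabla u(x)=g(Q)$ for every $Q\in\partial\Omega$, as claimed.

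The only point that is not entirely routine is the observation that one cannot simply let $\alpha\downarrow 0$ in (\ref{est.C1a.phi}): the constant there depends on $\alpha$, and $[\phi]_{C^{1,\alpha}(\R^d)}$ is not controlled by $\norm{\phi}_{C^1}$. It is precisely the $\alpha$-free bound coming from the weak maximum principle that lets the $C^1$-approximation $\phi_j\to\phi$ propagate to $u_j\to u$ in $C^1(\overline{\Omega})$. The remaining ingredients — mollification, finiteness of the geodesic diameter of a bounded Lipschitz domain, hypoellipticity of $\Delta^2$, and uniqueness for $(D)_2$ — are standard.
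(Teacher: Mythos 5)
Your proof is correct and follows essentially the same route as the paper: approximate the $C^1$ data by $C^{1,\alpha}$ data, apply Theorem \ref{thm.C1a} to the approximants, and use the weak maximum principle to get the $\alpha$-independent uniform control. The only cosmetic difference is that the paper applies the maximum principle directly to $u-u_\rho$ and passes to the limit pointwise at each boundary point, whereas you form a Cauchy sequence in $C^1(\overline{\Omega})$ and identify its limit with $u$ via $(D)_2$ uniqueness; both steps are valid.
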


\begin{proof}
	Let $(f,g) = (\phi,\nabla \phi)|_{\partial\Omega} \in \CA^{1}(\partial\Omega)$ for some $\phi\in C_0^1(\R^d)$. For any given $\rho>0$, we may construct $\phi_\rho \in C^{1,\alpha}_0(\R^d)$ for some $\alpha\in (0,\e)$ so that
	\begin{equation}\label{est.phi.rho}
	\norm{\phi - \phi_\rho}_{L^\infty(\R^d)} \le \rho, \qquad \norm{\nabla \phi - \nabla \phi_\rho}_{L^\infty(\R^d)} \le \rho.
	\end{equation}
	Let $(f_\rho,g_\rho) = (\phi_\rho,\nabla \phi_\rho)|_{\partial\Omega} \in \CA^{1,\alpha}(\partial\Omega)$. 
	Thus, (\ref{est.phi.rho}) implies 
	\begin{equation*}
	\norm{f - f_\rho}_{L^\infty(\R^d)} \le \rho, \qquad \norm{g - g_\rho}_{L^\infty(\R^d)} \le \rho.
	\end{equation*}
	
	Consider the biharmonic equation with data $(f_\rho,g_\rho)$
	\begin{equation}
	\left\{
	\begin{aligned}
	&\Delta^2 u_\rho = 0, \quad \txt{in } \Omega, \\
	&u_\rho = f_\rho,\  \frac{\partial}{\partial n}u_\rho = h_\rho, \quad \txt{on } \partial\Omega,
	\end{aligned}
	\right.
	\end{equation}
	where $h_\rho = n\cdot g_\rho$. Then, obviously, the difference $u - u_\rho$ satisfies
	\begin{equation}
	\left\{
	\begin{aligned}
	&\Delta^2 (u-u_\rho) = 0, \quad \txt{in } \Omega, \\
	&u - u_\rho = f - f_\rho,\  \frac{\partial}{\partial n}(u-u_\rho) = n\cdot (g - g_\rho), \quad \txt{on } \partial\Omega,
	\end{aligned}
	\right.
	\end{equation}
	where $(f-f_\rho, g-g_\rho) \in \WA^{1,\infty}(\partial\Omega)$. Now, by the weak maximum principle, we have
	\begin{equation*}
	\norm{\nabla u - \nabla u_\rho}_{L^\infty(\Omega)} \le C\norm{g - g_\rho} \le C\rho.
	\end{equation*}
	Since the domain $\Omega$ is bounded, the above estimate also gives
	\begin{equation*}
	\norm{u-u_\rho}_{L^\infty(\Omega)} \le C\rho,
	\end{equation*}
	where $C$ is independent of $\rho$.
	
	On the other hand, by Theorem \ref{thm.C1a}, for any given $Q\in \partial\Omega$,
	\begin{equation*}
	\lim_{\Omega \ni x\to Q } u_\rho(x) = f_\rho(Q), \quad \text{and} \quad \lim_{\Omega \ni x\to Q } \nabla u_\rho(x) = g_\rho(Q).
	\end{equation*}
	Therefore,
	\begin{equation*}
	\begin{aligned}
	\limsup_{\Omega \ni x\to Q} |u(x) - f(Q)| & \le \norm{u - u_\rho}_{L^\infty(\Omega)} + \limsup_{\Omega \ni x\to Q} |u_\rho(x) - f_\rho(Q)| + |f(Q) - f_\rho(Q)| \\
	& \le C\rho.
	\end{aligned}
	\end{equation*}
	Since $\rho>0$ is arbitrary, we may let $\rho \to 0$ and obtain
	\begin{equation*}
	\lim_{\Omega \ni x\to Q } u(x) = f(Q).
	\end{equation*}
	The proof for $\nabla u(x)$ is the same.
\end{proof}

\begin{remark}
	In view of Remark \ref{rmk.d23}, the above classical solution may also be obtained in arbitrary Lipschitz domains for $d = 2$ or $3$. In particular, for $d=2$, we may even show (\ref{est.C1a.phi}) with $\alpha \in (0,\frac{1}{2} +\e)$, instead of $\alpha\in (0,\e)$. This is due to a better reverse H\"{o}lder inequality. Actually, by the same argument in Remark \ref{rmk.d23}, we may use the $(R)_{2+\e}$ regularity to show the reverse H\"{o}lder inequality with $p = 4+\e$. By the Sobolev embedding theorem, this implies the Green's function is actually of $C^{1,\frac{1}{2} + \e}$ (excluding the poles), which yields Lemma \ref{lem.DG.alpha} and Lemma \ref{lem.DnDG.alpha} with $\alpha\in (0,\frac{1}{2}+\e)$.
\end{remark}

\textbf{Acknowledgment.} The author is supported in part by National Science Foundation grant DMS-1600520.

\bibliographystyle{plain}
\bibliography{mybib}
\end{document}